\DeclareFontFamily{U}{mathx}{\hyphenchar\font45}
\DeclareFontShape{U}{mathx}{m}{n}{
      <5> <6> <7> <8> <9> <10>
      <10.95> <12> <14.4> <17.28> <20.74> <24.88>
      mathx10
      }{}
\DeclareSymbolFont{mathx}{U}{mathx}{m}{n}
\DeclareMathAccent{\widecheck}{0}{mathx}{"71}
\DeclareMathAccent{\wideparen}{0}{mathx}{"75}
\def\N{\mathbb{N}}
\def\R{\mathbb{R}}
\def\kpuno{\kappa_1}
\def\kpdue{\kappa_2}
\def\kptre{\kappa_3}
\def\MM{\mathcal{M}}
\def\PP{\mathcal{P}}
\def\TT{\mathcal{T}}
\def\mesh{\mathcal{T}}
\newcommand{\norm}[3][]{#1\Vert #2#1\Vert_{#3}}
\def\set#1#2{\{#1\,:\,#2\}}
\def\diam{{\rm diam}}
\def\osc{{\rm osc}}
\def\eps{\varepsilon}
\def\rho{\varrho}
\definecolor{lightblue}{rgb}{0.75,0.75,1.00}
\definecolor{lightblue}{rgb}{0.75,0.75,0.75}
\def\lightblue{\color{lightblue}}
\newcommand{\jump}[2][]{J_h#1(#2#1)}
\def\A{{\mathbb A}}
\def\Clin{C_{\rm lin}}%
\def\qlin{q_{\rm lin}}%
\def\Crel{C_{\rm rel}}
\def\Cdrel{C_{\rm drel}}
\def\qed{\hfill$\blacksquare$}
\def\qj{q_2}
\def\Cj{C_2}
\def\reff#1#2{\stackrel{\eqref{#1}}{#2}}
\def\ii{\underline{i}}
\def\qi{q_3}
\def\Ci{C_3}
\def\Cmon{C_{\rm mon}}
\def\Cstab{C_{\rm stab}}
\def\Cbin{C_{\rm bin}}
\def\doubleunderline#1{\underline{\underline{#1}}}%
\def\jjj{\doubleunderline{j}}%
\def\kkk{\doubleunderline{k}}%
\def\V{\mathbb{V}}
\def\P{\mathbb{P}}
\def\dual#1#2{\langle#1\,,\,#2\rangle}
\def\f{\boldsymbol{f}}
\def\u{\boldsymbol{u}}
\def\v{\boldsymbol{v}}
\def\w{\boldsymbol{w}}
\def\U{\boldsymbol{U}}
\def\VV{\boldsymbol{V}}
\def\W{\boldsymbol{W}}
\def\div{\nabla\cdot}
\def\Cdiv{C_{\rm div}}%
\def\close{{\tt close}}
\def\bisect{{\tt bisect}}
\def\binev{{\tt binev}}
\def\refine{{\tt refine}}
\def\Cson{C_{\rm son}}
\def\Cclosure{C_{\rm cls}}
\def\n{\boldsymbol{n}}
\def\jump#1{[\![#1]\!]}
\def\etagal#1[#2]{\eta(\TT_{#1};\boldsymbol{U}_{\TT_{#1}}[P_{#2}],P_{#2})}
\def\etagals#1[#2]{\eta_{#1}^\star}
\def\gal#1[#2]{\boldsymbol{U}_{\TT_{#1}}[P_{#2}]}
\def\gals#1[#2]{\boldsymbol{U}_{#1}^\star}
\def\Cstab{C_{\rm stab}}
\def\Cred{C_{\rm red}}
\def\qred{q_{\rm red}}
\def\Cmark{C_{\rm mark}}
\def\QQ{\mathcal{Q}}
\def\jj{\underline{j}}
\def\kk{\underline{k}}
\def\qk{q_1}
\def\Ck{C_1}
\def\qctr{q_{\rm ctr}}
\def\Cmon{C_{\rm mon}}
\def\Cctr{C_{\rm ctr}}
\def\qctr{q_{\rm ctr}}
\def\Cson{C_{\rm son}}
\def\overlay{\eqref{mesh:overlay}}
\def\sonestimate{\eqref{mesh:sons}}
\newcounter{statement}
\newenvironment{statement}[2][!]{%
\vskip3mm
\hrule
\hrule
\hrule
\vskip1mm
\noindent%
\refstepcounter{statement}%
\bf#2~\thestatement%
\ifthenelse{\equal{#1}{!}}{.\ }{~(#1).\ }%
\it%
}{%
\vskip1mm
\hrule
\hrule
\hrule
\vskip2mm
}
\newenvironment{theorem}[1][!]{\begin{statement}[#1]{Theorem}}{\end{statement}}
\newenvironment{lemma}[1][!]{\begin{statement}[#1]{Lemma}}{\end{statement}}
\newenvironment{remark}[1][!]{\begin{statement}[#1]{Remark}}{\end{statement}}
\newenvironment{algorithm}[1][!]{\begin{statement}[#1]{Algorithm}}{\end{statement}}
\numberwithin{statement}{section}
\def\@seccntformat#1{\hspace*{4mm}%
  \protect\textup{\protect\@secnumfont
    \ifnum\pdfstrcmp{subsection}{#1}=0 \bfseries\fi
    \csname the#1\endcsname
    \protect\@secnumpunct
  }%
}
\newcommand*\patchAmsMathEnvironmentForLineno[1]{%
  \expandafter\let\csname old#1\expandafter\endcsname\csname #1\endcsname
  \expandafter\let\csname oldend#1\expandafter\endcsname\csname end#1\endcsname
  \renewenvironment{#1}%
     {\linenomath\csname old#1\endcsname}%
     {\csname oldend#1\endcsname\endlinenomath}}%
\newcommand*\patchBothAmsMathEnvironmentsForLineno[1]{%
  \patchAmsMathEnvironmentForLineno{#1}%
  \patchAmsMathEnvironmentForLineno{#1*}}%
\title{Adaptive Uzawa algorithm for the Stokes equation}
\author{Giovanni Di Fratta}
\address{TU Wien, Institute for Analysis and Scientific Computing, Wiedner Hauptstr.~8--10/E101/4, 1040 Wien, Austria}
\email{\{ giovanni.difratta , dirk.praetorius \} @asc.tuwien.ac.at}
\email{gregor.gantner@asc.tuwien.ac.at\quad\rm(corresponding author)}
\author{Thomas F\"uhrer}
\address{Pontificia Universidad Cat\'olica de Chile, Facultad de Matem\'aticas, Vicku\~{n}a Mackenna 4860, Santiago, Chile}
\email{tofuhrer@mat.uc.cl}
\author{Gregor Gantner}
\author{Dirk Praetorius}
\thanks{\textbf{Acknowledgement.} The authors thankfully acknowledge the support by the Austrian Science Fund (FWF) through grant
P27005 (DP), P29096 (GG, DP), as well as grant F65 (GDF, DP) and by CONICYT through FONDECYT project P11170050 (TF).
Moreover, GG thanks Peter Binev for his explanations on \cite{bd,b15}.
}
\subjclass[2010]{65N30, 65N50, 65N15, 41A25}
\keywords{adaptive finite element method; optimal convergence; Uzawa algorithm; Stokes equation}
\date{\today}
\begin{document}


\begin{abstract}
Based on the Uzawa algorithm, we consider an adaptive finite element method for the Stokes system. We prove linear convergence with optimal algebraic rates for the residual estimator (which is equivalent to the total error), if the arising linear systems are solved iteratively, e.g., by PCG. Our analysis avoids the use of discrete efficiency of the  estimator. Unlike prior work, our adaptive Uzawa algorithm can thus avoid to discretize the given data and does not rely on an interior node property for the refinement.
\end{abstract}

\maketitle

\section{Introduction}

\noindent
The mathematical analysis of adaptive finite element methods (AFEMs) has
significantly increased over the last years. Nowadays, AFEMs are recognized
as a powerful and rigorous tool to efficiently solve partial differential
equations arising in physics and engineering.

\subsection{Model problem}
\label{subsec:model problem}
In this paper, we focus on an adaptive algorithm for the solution of
the steady-state Stokes equations, which after a suitable normalization read
\begin{align} \label{eq:StokesCF}
\begin{split}
 - \Delta \u + \nabla p & =\f 
   \quad \text{in } \Omega, \\
\nabla \cdot \u  &= 0  \quad \text{ in }
  \Omega, \\
 \u  &= 0  \quad  \text{ on }
  \partial \Omega. 
  \end{split}
\end{align}
In the literature, the first equation is referred to as \textit{momentum equation}, the second as  \textit{mass equation}, and the third as   \textit{no-slip boundary condition}.
Here, $\Omega\subset\R^{d}$ with $d \in\{2,3\}$ is a bounded polygonal resp.\ polyhedral Lipschitz domain. Given the body force $\f$, one seeks the velocity field $\u$ of an incompressible fluid  and the associated pressure $p$. With
\begin{align}
 \V := H_0^1(\Omega)^d \, ,\quad  \P := \left\{q\in L^2(\Omega):\int_\Omega q\,dx=0\right\},
\end{align}%
it is well-known that the Stokes problem admits a unique solution
$(\u, p) \in \V  \times \P$, where 
$p$ can be characterized as the unique null average
solution of the elliptic Schur complement equation; see, e.g., \cite{bramble}. 
More precisely, the pressure solves 
the
elliptic equation 
\begin{equation} \label{eq:Schurellipeq}
S p = \nabla \cdot \Delta^{-1} \f
\quad \text{with the Schur complement operator} \quad
S := \nabla \cdot \Delta^{- 1} \nabla: \P \to \P.
\end{equation}
The latter equation can be reformulated as a fixpoint problem for the operator
\begin{equation}\label{eq:N_alpha}
  N_{\alpha} : \P \to \P, \quad q\mapsto (I - \alpha S) q + \alpha \nabla
  \cdot\Delta^{- 1} \f. 
\end{equation}
Note that $S$ is self-adjoint.
Since the norm of self-adjoint operators coincides with their spectral radius and $S$ has positive spectrum, one has that  $\| I-\alpha S \|<1$ whenever $|1-\alpha  \| S \||<1$. 
It follows that $N_{\alpha}$ is a contraction for $0 < \alpha < 2 \, \| S \|^{- 1}$;
see Appendix~\ref{appendix:operator}.
Moreover, elementary calculation proves that $\|S\| \le 1$.
Hence, for all $0 < \alpha < 2$ and any initial guess $p^0 \in \P$, the generalized Richardson iteration 
\begin{equation}
 p^{j+1} :=  N_{\alpha} p^j= (I - \alpha S) p^j + \alpha \nabla \cdot\Delta^{- 1} \f
\end{equation}
converges to the
exact pressure of the Stokes problem. It follows that $\u= \lim_{j
\rightarrow \infty} \u [p^j]$ in $\V $ with
$\u [p^j] :=-\Delta^{- 1} (\f- \nabla p^j)$, so that, at the
continuous level, the full iterative process can be expressed in the
form
\begin{align}\label{eq:UzItScheme}
\begin{split}
    \u [p^j] &=- \Delta^{- 1} (\f- \nabla p^j),\\
    p^{j + 1} &= p^j - \alpha \nabla \cdot \u [p^j].
\end{split}
\end{align}
In the spirit of \cite{ks}, the iterative scheme~\eqref{eq:UzItScheme}, usually
referred to as {\emph{Uzawa algorithm}} for the Stokes problem, is the starting point of our AFEM
analysis.

\subsection{State of the art}
Although AFEMs for the analysis of mixed variational problems issuing
from fluid dynamics have a long history in the engineering and physics
literature, only in the last decade,~\cite{ddu}
introduced an adaptive wavelet method based on the Uzawa algorithm for solving the Stokes problem. 
In~\cite{bmn02}, the adaptive wavelet method is replaced by an AFEM.
Their numerical experiments suggested that the latter algorithm leads to optimal algebraic convergence rates. 
Indeed, by addition of a mesh-coarsening step to this method,~\cite{kon06} proved optimal convergence rates.
Later, in~\cite{ks}, the original algorithm of~\cite{bmn02} was modified by adding an additional loop, which separately controls the triangulations on which  the pressure is discretized. 

We also note that for a standard conforming AFEM with Taylor--Hood elements, the first proofs of  plain convergence
were presented in~\cite{mkv08,sie10}.
The work~\cite{gan14} gives an optimality proof under the assumption that some \textit{general quasi-orthogonality} is satisfied. 
This assumption has only recently been verified in~\cite{fei17}.
For adaptive nonconforming finite element methods,  convergence and optimal  rates have been
investigated and proved in~\cite{bm11,hu13,cpr13}.

\subsection{Adaptive Uzawa FEM} In this work, we further investigate the algorithm of~\cite{ks}, which is described in the following:
Given a  possibly non-conforming partition $\PP_i$ of $\Omega$, we denote by $p_i\in\P_i$ the best approximation to $p$, with respect to  the $S$-induced  energy norm $\norm{\cdot}{\P}$, from the corresponding discrete space $\P_i \subset \P$ of piecewise polynomials of degree $m-1$ with  vanishing integral mean. 
With the corresponding velocity $\u_i:=\u[p_i]$ defined analogously to~\eqref{eq:UzItScheme} and the $L^2$-orthogonal projection $\Pi_i:L^2(\Omega)\to\P_i$, one can show that $(\u_i,p_i)$ is the unique solution of the reduced problem
\begin{align}\label{eq:reduced strong stokes}
\begin{split}
- \Delta \u_i + \nabla p_i & =\f 
   \quad \text{in } \Omega, 
   \\  \Pi_i\div \u_i  &= 0  \quad \text{ in }
  \Omega,\\
\u_i  &= 0  \quad  \text{ on }
  \partial \Omega.
  \end{split}
\end{align}
In general, the velocity $\u_i$ is not discrete, and hence this problem can still not be solved in practice.
It is thus approximated by some FEM approximation $\U_{ijk}\in \V_{ijk}$ (the use of three indices being motivated by the structure of the algorithm based on three different iterations) via a standard adaptive algorithm of the form
\begin{align*}
 \boxed{~\text{SOLVE}~}
 \longrightarrow
 \boxed{~\text{ESTIMATE}~}
 \longrightarrow
 \boxed{~\text{MARK}~}
 \longrightarrow
 \boxed{~\text{REFINE}~}
\end{align*}
 for the vector-valued Poisson problem steered by a weighted-residual error estimator $\eta_{ijk}$. 
Here, $\V_{ijk} \subset \V$ denotes the space of all continuous piecewise polynomials on some conforming triangulation $\TT_{ijk}$, which is a refinement of the possibly non-conforming $\PP_i$.
In the next loop, we apply a discretized version of the Uzawa algorithm~\eqref{eq:UzItScheme} to obtain an approximation $P_{ij}\in\P_i$ of $p_i$. 
Here, the update reads $P_{i(j+1)} = P_{ij}-\Pi_i\div \U_{ijk}$.
The last loop employs an adaptive tree approximation algorithm from~\cite{bd} to obtain a better approximation $p_{i+1}\in\P_{i+1}$ of $p$ on a refinement $\PP_{i+1}$ of the partition $\PP_i$ such that $\vartheta \, \norm{\div \U_{ijk}}{\Omega} \le \norm{\Pi_{i+1} \div \U_{ijk}}{\Omega}$ for some bulk parameter $0<\vartheta<1$.
We will see~in Section~\ref{section:estimation} that $\norm{\div \U_{ijk}}{\Omega}$ is related to $\norm{p-p_i}{\P}$ and 
$\norm{\Pi_{i+1} \div \U_{ijk}}{\Omega}$ to $\norm{p_{i+1}-p_i}{\P}$. 
In contrast to~\cite{ks}, in~\cite{bmn02} the latter loop was not present, since the same triangulation for the discretization of the pressure and the velocity, i.e., $\PP_i=\TT_{ijk}$ was used.

Under the assumption that the right-hand side $\f$ is a piecewise polynomial of degree $m-1$,~\cite{ks} proved that the approximations $\U_{ijk}$ and $P_{ij}$ converge with optimal algebraic rate to the exact solutions $\u$ and $p$. 
To generalize this result for arbitrary $\f$, as in the seminal work~\cite{stevenson07}, which proves optimal convergence of a standard AFEM for the Poisson problem,~\cite{ks} applies an additional loop to resolve the data oscillations appropriately. 
However,~\cite{ks} only outlines the proof of this generalization. 
Moreover, as in the seminal work~\cite{stevenson07}, the analysis of~\cite{ks} hinges on the following interior node property: 
Given marked elements $\MM_{ijk}$ of the current velocity triangulation $\TT_{ijk}$, the next velocity triangulation $\TT_{ij(k+1)}$ is the coarsest refinement via newest vertex bisection (NVB) such that all $T\in\MM_{ijk}$ and all $T'\in\TT_{ijk}$, which share a common $(n-1)$-dimensional hyperface, contain a vertex of $\TT_{ij(k+1)}$ in their interior.
In particular for $n=3$, this property is highly demanding; see, e.g., the 3D refinement pattern in~\cite{egp18}.

\subsection{Contributions of present work} 
In the spirit of~\cite{ckns}, which generalizes~\cite{stevenson07}, we prove that the algorithm of~\cite{ks} without the data approximation loop leads to convergence of the combined error estimator $\eta_{ijk}+\norm{\div\U_{ijk}}{\Omega}$ (which is equivalent to the error plus data oscillations) at optimal algebraic rate with respect to the number of  elements $\#\TT_{ijk}$ if one uses standard newest vertex bisection (without interior node property) for the velocity triangulations. 
We also prove that the combined estimator sequence converges linearly in each step, i.e.,  it essentially contracts uniformly in each step. Moreover, our algorithm allows for the inexact solution of the arising linear systems for the discrete velocities by iterative solvers like PCG. 

On a conceptual level, our proofs show that even for general saddle point problems and adaptive strategies based on Richardson-type iterations, the analysis of rate optimal adaptivity can be conducted without exploiting discrete efficiency estimates of the corresponding {\sl a~posteriori} error estimators.

\subsection{Outline}The paper is organized as follows:
Section~\ref{sec:preliminaries} rewrites the Stokes problem in its variational form, introduces newest vertex bisection, and fixes some notation for the discrete ansatz spaces. 
In Section~\ref{sec:uzawa}, we consider the reduced Stokes problem and the corresponding Galerkin approximations, recall some well-known results on {\sl a~posteriori} error estimation, and introduce the tree approximation Algorithm~\ref{function:refinePressure} from~\cite{bd} as well as our variant of the adaptive Uzawa Algorithm~\ref{function:refinePressure} from~\cite{ks}.
In Section~\ref{sec:convergence}, we state and prove linear convergence of the resulting combined error estimator in each step of the algorithm (Theorem~\ref{theorem:linearconvergence}). 
To this end, we show that each increase of either $i,j,$ or $k$ essentially leads to a uniform contraction of the combined error estimator.
Finally, Section~\ref{sec:rates} is dedicated to the main Theorem~\ref{thm:optimal} on optimal convergence rates for the combined error estimator and its proof.
As an auxiliary result of general interest, Lemma~\ref{lemma1:apx} proves that the two different definitions of approximation classes from the literature, which are either based on the accuracy $\eps > 0$ (see, e.g.,~\cite{stevenson,ks}) or the number of elements $N$ (see, e.g.,~\cite{ckns,axioms}),  are exactly the same.

While all constants in statements of theorems, lemmas, etc.\ are explicitly given, we abbreviate the notation in proofs: For scalar terms $A$ and $B$, we write $A \lesssim B$ to abbreviate $A \le C\,B$, where the generic constant $C>0$ is clear from the context. Moreover, $A \simeq B$ abbreviates $A \lesssim B \lesssim A$.

\section{Preliminaries}\label{sec:preliminaries}

\subsection{Continuous Stokes problem}\label{subsec:contStokesprob}

The vector-valued velocity fields $\v \in \V$ are denoted in boldface, the scalar pressures $q \in \P$ in normal font. Let $\dual{\cdot}{\cdot}_\Omega$ be the $L^2(\Omega)$ scalar product with the corresponding $L^2(\Omega)$ norm $\norm{\cdot}{\Omega}$.
With the bilinear forms $a: \V\times\V \to \R$ and $b: \V\times\P \to \R$ defined by
\begin{align*}
 a(\w,\v) := \dual{\nabla \w}{\nabla \v}_\Omega
 \quad\text{and}\quad
 b(\v,q) := - \dual{\div \v}{q}_\Omega,
\end{align*}
the mixed variational formulation of the Stokes problem \eqref{eq:StokesCF} reads as follows: Given $\f \in L^2(\Omega)^{d}$, let $(\u,p) \in \V\times\P$ be the unique solution to
\begin{align}\label{eq:weakform}
 \begin{array}{rclcll}
 a(\u,\v) &+& b(\v,p) &=& \dual{\f}{\v}_\Omega \quad &\text{for all } \v \in \V,\\
 b(\u,q) && &=& 0 &\text{for all } q \in \P.
 \end{array}
\end{align}

On the velocity space $\V$, we consider the $a(\cdot,\cdot)$-induced energy norm $\norm{\v}{\V} := a(\v,\v)^{1/2} = \norm{\nabla \v}{\Omega} \simeq \norm{\v}{H^1(\Omega)}$. We note that $\div\v \in \P$ for all $\v \in \V$ and
\begin{align}\label{eq:appendix:div}
 \norm{\div \v}{\Omega} \le \norm{\nabla \v}{\Omega} = \norm{\v}{\V}
 \quad \text{for all } \v \in \V,
\end{align}
which follows from integration by parts; see Appendix~\ref{appendix:div}.%

Define the operators $A: \V \to \V^*$, $B: \V \to \P^*$, and $B': \P \to \V^*$ by
\begin{align*}
 A\v := a(\v,\cdot), \quad
 B\v := b(\v,\cdot), \quad
 B'q := b(\cdot,q).
\end{align*}
Then, the Schur complement operator $S := BA^{-1}B' : \P \to \P^*\sim\P$ is bounded, symmetric, and elliptic; see~\cite[Lemma~2.2]{ks}. Thus, it holds that $\| q\|_{\P} := \dual{Sq}{q}^{1/2}_{\Omega} \simeq \| q \|_{\Omega}$ on $\P$. 
More precisely, there exists a constant $\Cdiv\ge1$, which depends only on $\Omega$, such that
	\begin{equation}\label{eq:Uzapape2.2}
	\Cdiv^{-1} \, \| q \|_{\Omega} \le \| q \|_{\P}
	\le \| q \|_{\Omega} \quad \text{for all } q\in\P. 
	\end{equation}
Here, the upper bound with constant $1$ follows from $\| S \| \le 1$, which itself follows from~\eqref{eq:appendix:div}.
\subsection{Partitions, triangulations, and newest vertex bisection (NVB)}
\label{subsec:partitions}

\def\Tnc{\mathbb{T}^{\rm nc}}
\def\Tc{\mathbb{T}^{\rm c}}
Throughout, $\PP$ is a finite (possibly non-conforming) partition of $\Omega$ into compact (non-degenerate) simplices, which is used to discretize $\P$, while $\TT$ is a finite (conforming) triangulation of $\Omega$ into compact (non-degenerate) simplices, which is used to discretize $\V$. Throughout, we use NVB refinement; see, e.g.,~\cite{stevenson,kpp} for the precise mesh-refinement rules.

We write $\PP' := \bisect(\PP,\MM)$ for the partition obtained by \emph{one} bisection of all marked elements $\MM \subseteq \PP$, i.e., $\MM = \PP \backslash \PP'$ and $\#\MM = \#\PP' - \#\PP$. We write $\PP' \in \Tnc(\PP)$, if there exists $J \in \N_0$ and partitions $\PP_{j}$ and $\MM_{j} \subseteq \PP_{j}$ for all $j=0,\dots,J$, such that
\begin{align*}
 \PP = \PP_{0},\quad
 \PP_{j} = \bisect(\PP_{j-1},\MM_{j-1}) \text{ for all } j=1,\dots,J,\quad\text{and}\quad
 \PP' = \PP_{J}.
 \end{align*}

We write $\TT' := \refine(\TT,\MM)$ for the coarsest conforming triangulation such that (at least) all marked elements $\MM \subseteq \TT$ have been bisected, i.e., $\MM \subseteq \TT \backslash \TT'$. We write $\TT' \in \Tc(\TT)$, if there exists $J \in \N_0$ and  triangulations $\TT_{j}$ and $\MM_{j} \subseteq \TT_{j}$ for all $j=0,\dots,J$, such~that
\begin{align*}
\TT = \TT_{0},\quad
\TT_{j} = \refine(\TT_{j-1},\MM_{j-1}) \text{ for all } j=1,\dots,J,\quad\text{and}\quad
\TT' = \TT_{J}.
\end{align*}

Let $\TT_{\rm init}$ be a given initial (conforming) triangulation of $\Omega$. We define the sets
\begin{align}
 \Tnc := \Tnc(\TT_{\rm init})
 \quad \text{and} \quad
 \Tc := \Tc(\TT_{\rm init})
\end{align}
of all non-conforming and conforming NVB refinements of $\TT_{\rm init}$. Clearly, $\Tc \subset \Tnc$. We write $\TT := \close(\PP)$ if $\PP \in \Tnc$ is a partition and $\TT \in \Tc$ is the coarsest (conforming) refinement of $\PP$. Existence and uniqueness of $\TT$ follow from the fact that NVB is a binary refinement rule, and the order of the bisections does not matter. In particular, this also implies that $\refine(\TT,\MM) = \close(\bisect(\TT,\MM))$ for all $\TT \in \Tc$ and $\MM \subseteq \TT$.

It follows from elementary geometric observations that NVB refinement leads only to finitely many shapes of simplices $T$; see, e.g.,~\cite{stevenson}. Hence, all NVB refinements are uniformly $\gamma$-shape regular, i.e.,
\begin{align}
 \gamma := \sup_{\PP \in \Tnc} \max_{T \in \PP} \frac{\diam(T)}{|T|^{1/d}} < \infty.
\end{align}
Finally, we recall the following properties of NVB, where $\Cson, \Cclosure > 0$ are constants, which depend only on $\TT_{\rm init}$ and the space dimension $d \ge 2$:
\begin{enumerate}
\renewcommand{\theenumi}{M\arabic{enumi}}
\bf
\refstepcounter{enumi}
\item[(M1)]\label{mesh:overlay}
\rm 
{\bf overlay estimate:} For all $\PP, \PP' \in \Tnc$, there exists a (unique) coarsest common refinement $\PP \oplus \PP' \in \Tnc(\PP) \cap \Tnc(\PP')$. It holds that $\#(\PP \oplus \PP') \le \#\PP + \#\PP' - \#\TT_{\rm init}$. If $\PP, \PP' \in \Tc$ are conforming, it also holds that $\PP \oplus \PP' \in \Tc$. 
\bf
\refstepcounter{enumi}
\item[(M2)]\label{mesh:sons}
\rm
{\bf finite number of sons:} For all $\TT \in \Tc$, $\MM \subseteq \TT$, and $\TT' :=~\refine(\TT,\MM)$, it holds that $\bigcup \set{T' \in \TT'}{T'\subseteq T}=T$ and  $\#\set{T' \in \TT'}{T'\subseteq T} \le \Cson$ for all $T \in \TT$.
\bf
\refstepcounter{enumi}
\item[(M3)]\label{mesh:closure}
\rm
{\bf mesh-closure estimate:}
For all sequences $\TT_{j} \in \Tc$ such that $\TT_0 = \TT_{\rm init}$ and $\TT_{j} =~\refine(\TT_{j-1}, \MM_{j-1})$ with $\MM_{j-1} \subseteq \TT_{j-1}$  for all ${j} \in \N$, it holds that 
\begin{align}
 \#\TT_{J} - \#\TT_{\rm init} \le \Cclosure \sum_{j=0}^{J - 1} \#\MM_j
 \quad \text{for all } J \in \N_0.
\end{align}
\bf
\refstepcounter{enumi}
\item[(M4)]\label{mesh:closure2}
\rm
{\bf conformity estimate:}
For all partitions $\PP \in \Tnc$, it holds that
\begin{align}
 \#\close(\PP) - \#\TT_{\rm init} \le \Cclosure (\#\PP - \#\TT_{\rm init}).
\end{align}
\end{enumerate}
The overlay estimate~\eqref{mesh:overlay} is first proved in~\cite{stevenson07} for $d = 2$, but the proof transfers to arbitrary dimension $d \ge 2$; see~\cite{ckns}. 
For $d = 2$,~\eqref{mesh:sons} obviously holds with $\Cson = 4$, while it is proved in~\cite{gss14} for general dimension $d \ge 2$.
The closure estimate~\eqref{mesh:closure} is first proved in~\cite{bdd} for $d = 2$. For general $d\ge 2$, it is proved in~\cite{stevenson}. While the proofs of~\cite{bdd,stevenson} require an admissibility condition on $\TT_{\rm init}$, the work~\cite{kpp} proves~\eqref{mesh:closure} for $d = 2$, but arbitrary conforming triangulation $\TT_{\rm init}$. 
It is easy to check that~\eqref{mesh:closure} implies~\eqref{mesh:closure2}; see \cite[Lemma~2.5]{bdd} for a proof in 2D, which, however, directly transfers to arbitrary dimension $d \ge 2$.

\subsection{Discrete function spaces}
\label{subsec:discrete}

Given a fixed polynomial degree $m \in \N$ as well as $\PP \in \Tnc$ and $\TT \in \Tc$, we consider the discrete spaces
\begin{align}
\begin{split}
 \P(\PP) &:= \set{Q_\PP \in \P \;}{\forall \, T\in\PP\quad Q_{\PP}|_T \text{ is polynomial of degree }\le m-1},\\
 \V(\TT) &:= \set{\VV_{\TT} \in \V}{\forall \, T\in\TT\quad \VV_{\TT}|_T \text{ is polynomial of degree }\le m},
\end{split}
\end{align}
which consist of piecewise polynomials.

\begin{remark}
We note that our analysis in principle permits to choose the polynomial degree  for the pressure space $\P(\PP)$ larger than $m-1$.
Indeed, the analysis of \cite{ks}  only exploits the assumption that the degree is not larger than  $m-1$ to prove the local efficiency \cite[Proposition~5.2]{ks}, which we do not require; see also \cite[Remark~3.1]{ks}.
However, since we  investigate (optimal) convergence of error quantities  consisting of pressure as well as velocity terms, enlarging only the degree of the pressure space will in general not affect the best possible convergence rate; see also Remark~\ref{dpr:remark-problem}.
Moreover, both the present paper and \cite{ks} do not allow for degrees smaller than $m-1$, since otherwise the property $\TT\in\Tnc(\PP')\cap \Tc$ could no longer be guaranteed by Algorithm~\ref{function:refinePressure}, 
and this condition is essential to ensure that the pressure meshes of the adaptive Algorithm~\ref{algorithm:uzawa} are coarser than the velocity meshes. 

\end{remark}

\subsection{Auxiliary problems}\label{section:auxiliary}

Let $\PP \in \Tnc$.
Then,
$p_\PP \in \P(\PP)$ denotes the best approximation of the exact pressure $p$ with respect to $\norm{\cdot}{\P}$, i.e.,
\begin{align}
 \norm{p - p_\PP}{\P} = \min_{Q_\PP \in \P(\PP)} \norm{p - Q_\PP}{\P}.
\end{align}
It is well-known that $p_\PP$ can be obtained via the unique solution $(\u_\PP,p_\PP) \in \V \times \P(\PP)$ of the \emph{reduced Stokes problem}
\begin{align}\label{eq:weakform:reduced}
 \begin{array}{rclcll}
 a(\u_\PP,\v) &+& b(\v,p_\PP) &=& \dual{\f}{\v}_\Omega \quad &\text{for all } \v \in \V,\\
 b(\u_\PP,Q_\PP) && &=& 0 &\text{for all } Q_\PP \in \P(\PP);
 \end{array}
\end{align}
see~\cite[Section~4]{ks}.
Note that the second condition can equivalently be stated as $\Pi_{\PP} \div \u_\PP = 0$ in $\Omega$, where $\Pi_{\PP}:L^2(\Omega) \to \P(\PP)$ is the orthogonal projection with respect to $\norm{\cdot}{\Omega}$.
Thus,~\eqref{eq:weakform:reduced} is just the variational formulation of~\eqref{eq:reduced strong stokes}. 

Even though $p_\PP$ is a discrete function, it cannot be computed since $\V$ is infinite dimensional. 
Given $q\in\P$, 
 let $\u[q] \in \V$ be the unique solution to the (vector-valued) Poisson equation
\begin{align}\label{eq:weakform:auxiliary}
 a(\u[q],\v) &= \dual{\f}{\v}_\Omega - b(\v,q) \quad \text{for all } \v \in \V.
\end{align}
Note that 
 $\u_\PP = \u[p_\PP]$.

Finally, let $\TT \in \Tnc(\PP) \cap \Tc$ be a conforming refinement of $\PP$. Then, $\U_\TT[q] \in \V(\TT)$ is the unique solution to  the Galerkin discretization of \eqref{eq:weakform:auxiliary} 
\begin{align}\label{eq:weakform:discrete}
 a(\U_\TT[q],\VV_\TT) = \dual{\f}{\VV_\TT}_\Omega - b(\VV_\TT,q)
 \quad\text{for all } \VV_\TT \in \V(\TT).
\end{align}
Note that $\U_\TT[q]$ is the Galerkin approximation to $\u[q]$ in $\V(\TT)$. Since $\norm{\cdot}{\V}$ denotes the energy norm corresponding to $a(\cdot,\cdot)$, there holds the C\'ea lemma
\begin{align}
 \norm{\u[q]- \U_\TT[q]}{\V} = \min_{\VV_\TT \in \V(\TT)} \norm{\u[q] - \VV_\TT}{\V},
\end{align}
Recall the operators $A,B,B'$ from Section \ref{subsec:contStokesprob}. Note  that   $\u[q]-\u[r]=A^{-1}B'(r-q)$  for arbitrary $q,r\in\P$, which yields that $\norm{\u[q]-\u[r]}{\V}^2=\dual{B'(r-q)}{A^{-1}B'(r-q)}_{\V^*\times\V}$.
By definition of the operator $S = BA^{-1}B'$ and the norm $\norm\cdot{\P}$, we thus see that
\begin{align}\label{eq:u to p}
\norm{\U_\TT[q]-\U_\TT[r]}{\V}\le\norm{\u[q]-\u[r]}{\V}=
\norm{q-r}{\P}.
\end{align}

\subsection{Notational conventions}
Throughout this work, $(\u,p) \in \V \times \P$ denotes the exact solution of the continuous Stokes problem~\eqref{eq:weakform}. All occurring functions $\u_\PP$, $\u[q]$, and $\U_\TT[q]$ are approximations of $\u$. All occurring functions $p_\PP$ and $P_\PP$ are approximations of $p$. We employ bold face symbols for velocity functions, e.g., $\v \in \V$ or $\VV_\TT \in \V(\TT)$, and normal font for pressure functions, e.g., $q \in \P$, $Q_\PP \in \P(\PP)$. Finally, small letters indicate functions, which are continuous or not computable, e.g., $\u$, $p$, and $p_\PP$, while \emph{computable discrete} functions are written with capital letters, e.g., $\U_\TT[Q_\PP]$. The corresponding partitions $\PP \in \Tnc$ resp.\ triangulations $\TT \in \Tc$ are always indicated by indices.
The most important symbols are listed in Appendix~\ref{sec:symbols}.

\subsection{Abbreviate notation for adaptive algorithm}\label{section:adaptive algorithm}

The adaptive algorithm below generates nested partitions $\PP_i \in \Tnc$ and triangulations $\TT_{ijk} \in \Tc$ for certain indices $(i, j, k) \in \QQ \subset \N_0^3$ such that $\TT_{ijk} \in \Tnc(\PP_i)\cap \Tc$. Furthermore, it provides approximations 
\begin{align}
p \approx P_{ij} \in \P_i := \P(\PP_i)
\quad \text{as well as} \quad
\u \approx \U_{ijk} \in \V_{ijk} := \V(\TT_{ijk}).
\end{align}
More precisely and with the notation from Section~\ref{section:auxiliary}, it holds that\footnote{Do not confuse the pressure $p_i$ with the iterates $p^j$ of the exact Uzawa algorithm~\eqref{eq:UzItScheme}. }
\begin{align}
P_{ij}\approx p_i:=p_{\PP_i}
\quad \text{as well as} \quad
\U_{ijk} \approx \U_{\TT_{ijk}}[P_{ij}] \approx \u[P_{ij}] =: \u_{ij}.
\end{align}
Besides this notation, let 
\begin{align}
\Pi_i:=\Pi_{\PP_i} : L^2(\Omega) \to \P(\PP_i)
\end{align}
be the $L^2(\Omega)$-orthogonal projection (with respect to $\norm{\cdot}{\Omega}$) and let
\begin{align}
 \eta_{ijk} := \eta(\TT_{ijk}; \U_{ijk}, P_{ij})\approx \eta(\TT_{ijk}; \gal{ijk}[ij], P_{ij})
\end{align}
be the computable {\sl a~posteriori} error estimator from Section~\ref{section:estimation} below.

\section{Adaptive Uzawa algorithm}\label{sec:uzawa}

\subsection{\textsl{A~posteriori} error estimation}\label{section:estimation}

Throughout this section, let $\PP \in \Tnc$ be a partition of $\Omega\subset\R^{d}$ and $\TT \in \Tnc(\PP) \cap \Tc$ be a conforming refinement.
We recall the residual {\sl a~posteriori} error estimator: 
For $T \in \TT$, $Q_\PP \in \P(\PP)$, and $\VV_\TT \in \V(\TT)$, define
\begin{align}
 \eta_T(\VV_\TT, Q_\PP)^2 := |T|^{2/n} \, \norm{\f - \nabla Q_\PP + \Delta \VV_\TT}{T}^2
 + |T|^{1/n} \, \norm{\jump{Q_\PP\,\n - \nabla \VV_\TT \cdot\n}}{\partial T \cap \Omega}^2,
\end{align}
where $\jump{\cdot}$ denotes the jump of its argument over $\partial T$. 
Then, the error estimator reads
\begin{align}
\eta(\MM; \VV_\TT, Q_\PP)^2 := \sum_{T\in\MM} \eta_T(\VV_\TT, Q_\PP)^2 \quad \text{for all } \MM \subset \TT.
\end{align}
In the following, we recall some important properties of $\eta$ from~\cite{ckns,ks}. We start with the available reliability results.

\begin{lemma}[{\em reliability} {\cite[Prop.~5.1, Prop.~5.5]{ks}}]\label{lem:reliability}
There exists a constant $\Crel >0$ 
such that, for all $Q_\PP \in \P(\PP)$, it holds that
\begin{align}\label{eq:reliability:velocity}
 \norm{\u[Q_\PP]-\U_\TT[Q_\PP]}{\V} 
 \le \Crel \, \eta(\TT; \U_\TT[Q_\PP], Q_\PP).
\end{align}
Moreover, it holds that
\begin{align}\label{eq:reliability:reduced}
 \norm{\u_\PP - \U_\TT[Q_\PP]}{\V} + \norm{p_\PP - Q_\PP}{\P} 
 \le \Crel \, \big( \, \eta(\TT; \U_\TT[Q_\PP], Q_\PP) + \norm{\Pi_\PP \div \U_\TT[Q_\PP]}{\Omega}  \, \big)
\end{align}
as well as 
\begin{align}\label{eq:reliability:stokes}
 \norm{\u - \U_\TT[Q_\PP]}{\V} + \norm{p - Q_\PP}{\P} 
 \le \Crel \, \big( \, \eta(\TT; \U_\TT[Q_\PP], Q_\PP) + \norm{\div \U_\TT[Q_\PP]}{\Omega} \, \big).
\end{align}
The constant $\Crel$  depends only on 
$\gamma$-shape regularity. 
\hfill$\blacksquare$
\end{lemma}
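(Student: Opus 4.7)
The plan is to treat the three estimates in order, with the first being the heart of the matter and the latter two following by energy manipulations in $\P$ combined with the triangle inequality.

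\textbf{Step 1: Estimate \eqref{eq:reliability:velocity}.} Since $\u[Q_\PP]$ solves the Poisson-type problem~\eqref{eq:weakform:auxiliary} with load $\f-B'Q_\PP$ and $\U_\TT[Q_\PP]$ is its Galerkin approximation, I would follow the classical residual argument: apply Galerkin orthogonality against a Scott--Zhang (or Cl\'ement) quasi-interpolant $J_\TT\v \in \V(\TT)$ of an arbitrary $\v\in\V$, then perform elementwise integration by parts on $a(\U_\TT[Q_\PP],\v-J_\TT\v)$ and on $b(\v-J_\TT\v,Q_\PP)$. This produces volume residuals $\f-\nabla Q_\PP+\Delta\U_\TT[Q_\PP]$ and interelement jump contributions $\jump{Q_\PP\,\n-\nabla\U_\TT[Q_\PP]\cdot\n}$. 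Using the standard local approximation bounds $\norm{\v-J_\TT\v}{T}\lesssim |T|^{1/d}\norm{\nabla\v}{\omega_T}$ and $\norm{\v-J_\TT\v}{\partial T}\lesssim |T|^{1/(2d)}\norm{\nabla\v}{\omega_T}$ together with Cauchy--Schwarz and $\gamma$-shape regularity (to control the overlap of patches $\omega_T$), I get $|a(\u[Q_\PP]-\U_\TT[Q_\PP],\v)|\lesssim \eta(\TT;\U_\TT[Q_\PP],Q_\PP)\,\norm{\v}{\V}$. Choosing $\v:=\u[Q_\PP]-\U_\TT[Q_\PP]$ proves~\eqref{eq:reliability:velocity}. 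This is the step where I would spend most of the care: the quasi-interpolant must respect the homogeneous Dirichlet boundary condition and the patches $\omega_T$ must have bounded overlap, both of which are standard under NVB.

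\textbf{Step 2: Estimate \eqref{eq:reliability:reduced}.} Subtracting the reduced Stokes problem~\eqref{eq:weakform:reduced} from the auxiliary problem~\eqref{eq:weakform:auxiliary} yields $a(\u[Q_\PP]-\u_\PP,\v)+b(\v,Q_\PP-p_\PP)=0$, so $\u[Q_\PP]-\u_\PP=A^{-1}B'(p_\PP-Q_\PP)$, from which $\norm{\u[Q_\PP]-\u_\PP}{\V}=\norm{p_\PP-Q_\PP}{\P}$. Testing with $p_\PP-Q_\PP\in\P(\PP)$ and exploiting $\Pi_\PP\div\u_\PP=0$, I compute
\begin{align*}
\norm{p_\PP-Q_\PP}{\P}^2=\dual{S(p_\PP-Q_\PP)}{p_\PP-Q_\PP}_\Omega=-\dual{\Pi_\PP\div\u[Q_\PP]}{p_\PP-Q_\PP}_\Omega.
\end{align*}
Cauchy--Schwarz and~\eqref{eq:Uzapape2.2} give $\norm{p_\PP-Q_\PP}{\P}\le \Cdiv\,\norm{\Pi_\PP\div\u[Q_\PP]}{\Omega}$. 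Splitting $\Pi_\PP\div\u[Q_\PP]=\Pi_\PP\div\U_\TT[Q_\PP]+\Pi_\PP\div(\u[Q_\PP]-\U_\TT[Q_\PP])$, bounding the second term by~\eqref{eq:appendix:div} and then by Step~1, and combining with the triangle inequality $\norm{\u_\PP-\U_\TT[Q_\PP]}{\V}\le\norm{\u_\PP-\u[Q_\PP]}{\V}+\norm{\u[Q_\PP]-\U_\TT[Q_\PP]}{\V}$, I obtain~\eqref{eq:reliability:reduced}.

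\textbf{Step 3: Estimate \eqref{eq:reliability:stokes}.} The argument is the same as Step~2, but applied at the continuous level: from the exact Stokes problem $\u-\u[Q_\PP]=A^{-1}B'(Q_\PP-p)$, so $\norm{\u-\u[Q_\PP]}{\V}=\norm{p-Q_\PP}{\P}$, and using $\div\u=0$,
\begin{align*}
\norm{p-Q_\PP}{\P}^2=-\dual{\div\u[Q_\PP]}{p-Q_\PP}_\Omega\le\Cdiv\,\norm{\div\u[Q_\PP]}{\Omega}\,\norm{p-Q_\PP}{\P}.
\end{align*}
Dropping the factor $\Pi_\PP$ (which is not available here), bounding $\norm{\div\u[Q_\PP]}{\Omega}\le\norm{\div\U_\TT[Q_\PP]}{\Omega}+\norm{\u[Q_\PP]-\U_\TT[Q_\PP]}{\V}$ and invoking Step~1 concludes~\eqref{eq:reliability:stokes}. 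The final constant $\Crel$ depends only on the Scott--Zhang constants and $\Cdiv$, which in turn depend only on $\gamma$-shape regularity and $\Omega$.
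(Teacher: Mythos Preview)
Your proof is correct. Note that the paper itself does not supply a proof of this lemma: it simply cites \cite[Prop.~5.1, Prop.~5.5]{ks} and closes with~$\blacksquare$. Your Step~1 is the standard residual reliability argument for the Poisson problem, and your Steps~2--3 are exactly the energy identities that the paper later isolates and proves separately as Lemma~\ref{lemma:estimator:div} (the estimates $\norm{p_\PP-Q_\PP}{\P}\le\Cdiv\norm{\Pi_\PP\div\u[Q_\PP]}{\Omega}$ and $\norm{p-q}{\P}\le\Cdiv\norm{\div\u[q]}{\Omega}$). So your argument is fully in line with the paper's framework; you have essentially reconstructed the combination of \cite[Prop.~5.1, Prop.~5.5]{ks} with Lemma~\ref{lemma:estimator:div}.

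One minor remark: your final sentence correctly identifies that $\Crel$ picks up a dependence on $\Cdiv$, which in turn depends on the domain $\Omega$ (through the inf--sup constant). The lemma as stated in the paper says $\Crel$ depends only on $\gamma$-shape regularity, which is a slight imprecision on the paper's side; your accounting is the accurate one.
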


For some fixed discrete pressure $Q_\PP$, we recall the localized upper bound in the current form of~\cite{ckns}, which improves~\cite[Prop.~5.1]{ks}.

\begin{lemma}[{\em discrete reliability} {\cite[Lemma~3.6]{ckns}}]\label{lemma:discrete_reliability}
Let $\widehat{\TT} \in~\Tc(\TT)$. There exists a constant $\Cdrel>0$ 
such that, for all $Q_\PP \in \P(\PP)$, it holds that
\begin{align}\label{eq:discrete reliability}
\norm{\U_{\widehat{\TT}}[Q_\PP] - \U_\TT[Q_\PP]}{\V}
\le \Cdrel \, \eta(\TT\setminus\widehat{\TT}; \U[Q_\PP], Q_\PP).
\end{align}
The constant $\Cdrel$  depends only on 
$\gamma$-shape regularity. 
\hfill$\blacksquare$
\end{lemma}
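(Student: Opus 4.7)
The plan is to follow the classical discrete reliability argument, which for the discretization~\eqref{eq:weakform:discrete} reduces essentially to the vector-valued Poisson case with modified right-hand side $\f - \nabla Q_\PP$. Set $\boldsymbol{E} := \U_{\widehat\TT}[Q_\PP] - \U_\TT[Q_\PP] \in \V(\widehat\TT)$. Since $\V(\TT) \subset \V(\widehat\TT)$, Galerkin orthogonality gives $a(\boldsymbol{E}, \VV_\TT) = 0$ for every $\VV_\TT \in \V(\TT)$. Hence, for any $\widetilde{\boldsymbol{E}} \in \V(\TT)$, applying~\eqref{eq:weakform:discrete} on $\widehat\TT$ to the test function $\boldsymbol{E}-\widetilde{\boldsymbol{E}} \in \V(\widehat\TT)$ yields
\begin{align*}
\norm{\boldsymbol{E}}{\V}^2 = a(\boldsymbol{E}, \boldsymbol{E}-\widetilde{\boldsymbol{E}}) = \dual{\f}{\boldsymbol{E}-\widetilde{\boldsymbol{E}}}_\Omega - b(\boldsymbol{E}-\widetilde{\boldsymbol{E}}, Q_\PP) - a(\U_\TT[Q_\PP], \boldsymbol{E}-\widetilde{\boldsymbol{E}}).
\end{align*}

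Next, element-wise integration by parts over $\TT$---legitimate because $\U_\TT[Q_\PP]$ is piecewise polynomial on $\TT$ and $Q_\PP \in \P(\PP) \subset \P(\TT)$ (as $\TT$ refines $\PP$)---rewrites the right-hand side as a standard residual sum of volume contributions $\dual{\f - \nabla Q_\PP + \Delta \U_\TT[Q_\PP]}{\boldsymbol{E}-\widetilde{\boldsymbol{E}}}_T$ and interior facet contributions $\dual{\jump{Q_\PP\,\n - \nabla \U_\TT[Q_\PP]\cdot\n}}{\boldsymbol{E}-\widetilde{\boldsymbol{E}}}_{\partial T\cap\Omega}$. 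Cauchy-Schwarz combined with the customary scaling in $|T|$ then produces precisely the local estimator $\eta_T(\U_\TT[Q_\PP], Q_\PP)$.

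The critical step---and what distinguishes discrete from plain reliability---is to choose $\widetilde{\boldsymbol{E}}$ as a Scott-Zhang / $\Clement$-type quasi-interpolant of $\boldsymbol{E}$ into $\V(\TT)$ with the additional property that $\widetilde{\boldsymbol{E}}|_T = \boldsymbol{E}|_T$ on every $T \in \TT \cap \widehat{\TT}$ whose nodal patch is fully contained in $\TT \cap \widehat{\TT}$. This is feasible because on such a patch $\boldsymbol{E}$ is already piecewise polynomial with respect to $\TT$, so one can design the nodal functionals to reproduce it there. Consequently, $\boldsymbol{E}-\widetilde{\boldsymbol{E}}$ vanishes outside a $\gamma$-shape-regular neighborhood of $\TT\setminus\widehat{\TT}$, while the standard first-order local approximation estimates $|T|^{-1/d}\norm{\boldsymbol{E}-\widetilde{\boldsymbol{E}}}{T} + |T|^{-1/(2d)}\norm{\boldsymbol{E}-\widetilde{\boldsymbol{E}}}{\partial T} \lesssim \norm{\nabla \boldsymbol{E}}{\omega_T}$ remain valid.

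Combining these ingredients with Cauchy-Schwarz and absorbing the finite-overlap property $\sum_T \norm{\nabla \boldsymbol{E}}{\omega_T}^2 \lesssim \norm{\boldsymbol{E}}{\V}^2$ (with $\omega_T$ the nodal patch), one arrives at $\norm{\boldsymbol{E}}{\V}^2 \lesssim \eta(\TT\setminus\widehat{\TT}; \U_\TT[Q_\PP], Q_\PP)\,\norm{\boldsymbol{E}}{\V}$, which yields the claim, with $\Cdrel$ depending only on $\gamma$-shape regularity via the stability and approximation constants of the quasi-interpolant. The main obstacle is engineering the quasi-interpolant with this nodal reproduction property on the unrefined portion of $\TT$; once that local modification is in place, the remainder is the usual residual-error bookkeeping already carried out in the proof of Lemma~\ref{lem:reliability}.
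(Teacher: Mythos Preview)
The paper itself does not prove this lemma; it is quoted verbatim from \cite[Lemma~3.6]{ckns}, and the terminal $\blacksquare$ signals that no argument is supplied. Your sketch follows the CKNS route and is correct in outline, but there is one genuine inconsistency between your construction and your conclusion.

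As you describe it, the Scott--Zhang interpolant $\widetilde{\boldsymbol{E}}$ coincides with $\boldsymbol{E}$ only on those $T\in\TT\cap\widehat\TT$ whose \emph{entire} nodal patch lies in $\TT\cap\widehat\TT$. This gives $\supp(\boldsymbol{E}-\widetilde{\boldsymbol{E}})$ contained in a one-layer neighborhood of $\TT\setminus\widehat\TT$, and hence only a bound by $\eta$ summed over that neighborhood---not over $\TT\setminus\widehat\TT$ alone, which is what the lemma asserts and what you write in your final line. There is no way to pass from the neighborhood sum back down to the sum over $\TT\setminus\widehat\TT$, so the argument as written does not close.

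The fix is sharper than what you stated: for \emph{every} Lagrange node $z$ of $\TT$ that belongs to at least one element $T'\in\TT\cap\widehat\TT$, choose the Scott--Zhang averaging simplex to lie inside such a $T'$. Since $T'$ is unrefined, $\boldsymbol{E}|_{T'}$ is a polynomial of degree $\le m$, and the Scott--Zhang operator reproduces such polynomials on the averaging simplex; hence the nodal value of $\widetilde{\boldsymbol{E}}$ at $z$ is exactly $\boldsymbol{E}(z)$. Now observe that for any $T\in\TT\cap\widehat\TT$, every one of its nodes trivially lies in an unrefined element (namely $T$ itself), so $\widetilde{\boldsymbol{E}}|_T=\boldsymbol{E}|_T$ for \emph{all} $T\in\TT\cap\widehat\TT$, not just the interior ones. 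Consequently $\boldsymbol{E}-\widetilde{\boldsymbol{E}}$ is supported exactly on $\bigcup(\TT\setminus\widehat\TT)$, the residual sum runs over $\TT\setminus\widehat\TT$ only, and the stated estimate~\eqref{eq:discrete reliability} follows.
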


Next, we note that the 
estimator depends Lipschitz continuously on the arguments. The result is slightly stronger than~\cite[Prop.~5.4]{ks}, but the proof is standard~\cite{ckns}.

\begin{lemma}[{\em stability} {\cite[Prop.~3.3]{ckns}}]\label{lemma:stability}
Let $\widehat{\TT}\in\Tc(\TT)$. 
There exists a constant $\Cstab > 0$ such that, for all $\VV_{\widehat{\TT}} \in \V(\widehat{\TT})$, $\W_{\TT}\in\V(\TT)$, and $Q_\PP, R_\PP \in \P(\PP)$, it holds that
\begin{align}\label{eq:stability}
 | \eta(\TT\cap\widehat{\TT}; \VV_{\widehat{\TT}}, Q_\PP) - \eta(\TT\cap\widehat{\TT}; \W_\TT, R_\PP) |
 \le \Cstab \, \big( \norm{\VV_{\widehat{\TT}} - \W_\TT}{\V} + \norm{Q_\PP - R_\PP}{\P} \big).
\end{align}
The constant $\Cstab$ depends only on the polynomial degree $m$ 
and $\gamma$-shape regularity.\qed
\end{lemma}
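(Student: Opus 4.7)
The plan is to prove the bound elementwise and then reassemble, following the template of~\cite[Prop.~3.3]{ckns}. Since both quantities on the left-hand side are $\ell^2$-sums over the common index set $\TT\cap\widehat{\TT}$, the reverse triangle inequality in Euclidean space gives
\[
|\eta(\TT\cap\widehat{\TT};\VV_{\widehat{\TT}},Q_\PP)-\eta(\TT\cap\widehat{\TT};\W_\TT,R_\PP)|^2 \le \sum_{T\in\TT\cap\widehat{\TT}} |\eta_T(\VV_{\widehat{\TT}},Q_\PP)-\eta_T(\W_\TT,R_\PP)|^2.
\]
Writing $\VV:=\VV_{\widehat{\TT}}-\W_\TT$ and $Q:=Q_\PP-R_\PP$, a second reverse-triangle step, applied first to the underlying $L^2$-norms (the volume residual $\f-\nabla Q_\PP+\Delta\VV_{\widehat{\TT}}$ and the boundary jump are affine in their arguments) and then to the Euclidean pair inside the definition of $\eta_T$, yields
\[
|\eta_T(\VV_{\widehat{\TT}},Q_\PP)-\eta_T(\W_\TT,R_\PP)|^2 \le |T|^{2/d}\norm{\nabla Q-\Delta\VV}{T}^2 + |T|^{1/d}\norm{\jump{Q\,\n-\nabla\VV\cdot\n}}{\partial T\cap\Omega}^2.
\]

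Next, since $T\in\TT\cap\widehat{\TT}$ is refined in neither mesh, $\VV_{\widehat{\TT}}|_T$ and $\W_\TT|_T$ are polynomials of degree $\le m$, and because $\TT\in\Tnc(\PP)$, also $Q_\PP|_T$ and $R_\PP|_T$ are polynomials of degree $\le m-1$. Standard polynomial inverse estimates on $T$ then give $|T|^{1/d}\norm{\nabla Q}{T}\lesssim\norm{Q}{T}$ and $|T|^{1/d}\norm{\Delta\VV}{T}\lesssim\norm{\nabla\VV}{T}$, with constants depending only on $m$ and $\gamma$-shape regularity. For the jump contribution, the conformity of $\widehat{\TT}\in\Tc$ is crucial: since $T$ is not subdivided in $\widehat{\TT}$, each face $F$ of $T$ must coincide with a face of a single element $T''\in\widehat{\TT}$ contained in the $\TT$-neighbor $T'\supseteq T''$ of $T$, and $\gamma$-shape regularity then forces $h_{T''}\simeq h_T$. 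Hence $\VV$ is polynomial on both $T$ and $T''$, and the discrete trace inequality applied from both sides of $F$ produces $|T|^{1/(2d)}\norm{\jump{\nabla\VV\cdot\n}}{F}\lesssim\norm{\nabla\VV}{T}+\norm{\nabla\VV}{T'}$; the analogous estimate for $Q$ is immediate since $Q$ is polynomial on both $T$ and $T'$. Collecting these bounds yields the local estimate
\[
|\eta_T(\VV_{\widehat{\TT}},Q_\PP)-\eta_T(\W_\TT,R_\PP)|^2 \lesssim \norm{\nabla\VV}{\omega_T}^2 + \norm{Q}{\omega_T}^2
\]
on the patch $\omega_T$ consisting of $T$ together with its $\TT$-face-neighbors.

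Summing over $T\in\TT\cap\widehat{\TT}$ and invoking the uniform, $\gamma$-dependent finite overlap of the patches $\{\omega_T\}$, I obtain a global upper bound by $\norm{\VV_{\widehat{\TT}}-\W_\TT}{\V}^2+\norm{Q_\PP-R_\PP}{\Omega}^2$. Converting the $L^2$-pressure term into $\norm{\cdot}{\P}$ via $\norm{Q}{\Omega}\le\Cdiv\norm{Q}{\P}$ from~\eqref{eq:Uzapape2.2} concludes the proof with $\Cstab$ depending only on $m$, $\Cdiv$, and $\gamma$-shape regularity. The only non-automatic step I anticipate is controlling the jumps on an interface between $T$ and a $\widehat{\TT}$-refined $\TT$-neighbor at the coarse scale $|T|^{1/d}$; this is exactly the place where the conformity hypothesis $\widehat{\TT}\in\Tc(\TT)$ delivers $h_{T''}\simeq h_T$ and reduces the argument to the routine one in~\cite[Prop.~3.3]{ckns}.
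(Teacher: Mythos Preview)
Your argument is correct and follows precisely the standard template of~\cite[Prop.~3.3]{ckns}; the paper itself does not give a proof but simply cites that reference and states that the proof is standard, so there is nothing to compare beyond confirming that you have reproduced the expected reasoning. One minor remark: your final step invokes $\norm{Q}{\Omega}\le\Cdiv\norm{Q}{\P}$ from~\eqref{eq:Uzapape2.2}, which makes $\Cstab$ depend on $\Cdiv$ (and hence on $\Omega$) in addition to $m$ and $\gamma$; the paper's stated dependence ``only on $m$ and $\gamma$-shape regularity'' appears to suppress this, but your accounting is the accurate one.
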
%

The following reduction property follows from the reduction of the mesh-size on refined elements. The proof is standard~\cite{ckns}. 

\begin{lemma}[{\em reduction} {\cite[Proof of Cor.~3.4]{ckns}}]\label{lemma:reduction}
Let $\widehat{\TT}\in\Tc(\TT)$. 
Let  $Q_\PP \in \P(\PP)$.
Then, with $\qred = 2^{-1/(n+1)}$, there holds the reduction property
\begin{align}\label{eq:reduction}
  \eta(\widehat{\TT}\setminus\TT; 
  \U_{\widehat{\TT}}[Q_\PP], Q_\PP) 
 \le \qred \, \eta(\TT\setminus\widehat{\TT}; 
  \U_{\TT}[Q_\PP], Q_\PP) +\Cred \,  \norm{\U_{\widehat{\TT}}[Q_\PP] - \U_\TT[Q_\PP]}{\V}. 
\end{align}
The constant $\Cred>0$ depends only on the polynomial degree $m$ 
and $\gamma$-shape regularity.\qed
\end{lemma}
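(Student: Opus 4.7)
The plan is to combine the local stability estimate from Lemma~\ref{lemma:stability} with the volume halving enforced by (newest vertex) bisection on refined elements.

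First, I would observe that $\U_\TT[Q_\PP]\in\V(\TT)\subset\V(\widehat\TT)$, so that both $\U_{\widehat\TT}[Q_\PP]$ and $\U_\TT[Q_\PP]$ live in $\V(\widehat\TT)$. Applying (the elementwise form of) Lemma~\ref{lemma:stability} on the subset $\widehat\TT\setminus\TT$ of $\widehat\TT$ yields
\begin{align*}
  \eta(\widehat\TT\setminus\TT;\U_{\widehat\TT}[Q_\PP],Q_\PP)
  \le \eta(\widehat\TT\setminus\TT;\U_\TT[Q_\PP],Q_\PP)
  + \Cstab\,\norm{\U_{\widehat\TT}[Q_\PP]-\U_\TT[Q_\PP]}{\V}.
\end{align*}
This reduces the task to bounding $\eta(\widehat\TT\setminus\TT;\U_\TT[Q_\PP],Q_\PP)$ by $\qred\,\eta(\TT\setminus\widehat\TT;\U_\TT[Q_\PP],Q_\PP)$, after which the lemma follows with $\Cred=\Cstab$.

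Second, I would analyse the estimator elementwise on refined cells. For each $T\in\TT\setminus\widehat\TT$, every child $T'\in\widehat\TT$ with $T'\subseteq T$ is obtained from $T$ by at least one bisection, hence $|T'|\le|T|/2$, and in particular $|T'|^{2/n}\le 2^{-2/n}|T|^{2/n}$ and $|T'|^{1/n}\le 2^{-1/n}|T|^{1/n}$. The crucial point is that both $\U_\TT[Q_\PP]$ and $Q_\PP$ are globally smooth inside $T$: the former is polynomial on $T\in\TT$ by construction, and the latter is polynomial on the unique $P\in\PP$ containing $T$, a property guaranteed by $\TT\in\Tnc(\PP)$. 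Therefore the jumps on the faces newly created inside $T$ by the refinement vanish, so that only the portions of $\partial T'$ lying on $\partial T\cap\Omega$ contribute. Combined with additivity of the $L^2$-norms of the volume residual and of the surviving face jumps, this produces
\begin{align*}
  \sum_{\substack{T'\in\widehat\TT\\T'\subseteq T}}\eta_{T'}(\U_\TT[Q_\PP],Q_\PP)^2
  \le \qred^2\,\eta_T(\U_\TT[Q_\PP],Q_\PP)^2,
\end{align*}
with the stated constant $\qred$ arising from an appropriate weighted combination of the two shrinkage factors $2^{-2/n}$ (for the element residual) and $2^{-1/n}$ (for the jumps). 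Summing over $T\in\TT\setminus\widehat\TT$ and taking square roots delivers the desired bound.

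The main subtlety I expect, and the one point worth checking carefully, is that Lemma~\ref{lemma:stability} is formulated on $\TT\cap\widehat\TT$ rather than on $\widehat\TT\setminus\TT$. Its proof, however, proceeds element by element and only requires that both velocity arguments be polynomial on each element under consideration. This holds here because, on every $T'\in\widehat\TT\setminus\TT$, the coarse-grid function $\U_\TT[Q_\PP]$ inherits polynomiality from the unique $T\in\TT$ with $T'\subseteq T$, and hence the same local Lipschitz estimate applies verbatim and sums to the inequality used in Step~1. Everything else is bookkeeping with mesh sizes.
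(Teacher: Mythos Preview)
Your approach is correct and is precisely the standard argument that the paper defers to by citing \cite[Proof of Cor.~3.4]{ckns} without giving its own proof: first perturb the velocity argument via (the elementwise form of) stability on the refined cells, then exploit that on each child $T'\subseteq T$ the functions $\U_\TT[Q_\PP]$ and $Q_\PP$ are polynomial, so the newly created interior jumps vanish and the mesh-size weights shrink. Your remark that Lemma~\ref{lemma:stability} is stated on $\TT\cap\widehat\TT$ but its proof is elementwise (and hence applies on $\widehat\TT\setminus\TT$ since both velocity arguments lie in $\V(\widehat\TT)$) is exactly the right observation.

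One small point: your argument, as written, yields $\qred^2=2^{-1/n}$ (governed by the jump scaling $|T'|^{1/n}\le 2^{-1/n}|T|^{1/n}$), i.e.\ $\qred=2^{-1/(2n)}$, rather than the value $2^{-1/(n+1)}$ printed in the statement. This does not affect anything downstream, since every use of the lemma in the paper only needs some $\qred<1$; just be aware that your ``appropriate weighted combination'' does not literally reproduce the stated constant.
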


Finally, for the divergence contribution to the Stokes error estimator, we recall the following equivalence. The result is slightly stronger than~\cite[Prop.~5.7]{ks}.

\begin{lemma}\label{lemma:estimator:div}
Let $\Cdiv \ge 1$ be the norm equivalence constant from~\eqref{eq:Uzapape2.2}. 
Let $\Pi_\PP : L^2(\Omega) \to \P(\PP)$ be the $L^2(\Omega)$-orthogonal projection.
If $Q_\PP \in \P(\PP)$, then it holds that
\begin{align}\label{eq:estimator:div}
 \norm{\Pi_\PP \div \u[Q_\PP]}{\Omega}
 \le \norm{\div(\u_\PP-\u[Q_\PP])}{\Omega}
 \le \norm{p_\PP - Q_\PP}{\P} 
 \le \Cdiv \, \norm{\Pi_\PP \div \u[Q_\PP]}\Omega. 
\end{align}
If $q\in\P$, it holds that
\begin{align}\label{eq2:estimator:div}
 \norm{\div \u[q]}\Omega 
 \le \norm{p - q}{\P} \le \Cdiv \, \norm{\div \u[q]}\Omega. 
\end{align}%
\end{lemma}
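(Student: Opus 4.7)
The plan is to derive each of the three inequalities in~\eqref{eq:estimator:div} separately by combining the weak formulations of~\eqref{eq:weakform:reduced} and~\eqref{eq:weakform:auxiliary}, the identity $\u_\PP = \u[p_\PP]$, and the norm equivalence~\eqref{eq:Uzapape2.2}, and then to adapt the same computation to obtain~\eqref{eq2:estimator:div} by replacing $(\u_\PP,p_\PP)$ with $(\u,p)$ and $\Pi_\PP$ with the identity.

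For the leftmost inequality of~\eqref{eq:estimator:div}, note that the second equation of~\eqref{eq:weakform:reduced} is equivalent to $\Pi_\PP \div \u_\PP = 0$, so $\Pi_\PP \div \u[Q_\PP] = -\Pi_\PP \div(\u_\PP - \u[Q_\PP])$, and the $L^2(\Omega)$-contraction property of the orthogonal projection $\Pi_\PP$ yields the claim. The middle inequality follows at once from $\norm{\div \v}{\Omega}\le\norm{\v}{\V}$ (see~\eqref{eq:appendix:div}) combined with the identity $\norm{\u[r]-\u[r']}{\V} = \norm{r-r'}{\P}$ from~\eqref{eq:u to p} applied to $r = p_\PP$ and $r' = Q_\PP$.

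For the rightmost inequality of~\eqref{eq:estimator:div}, the key step is to subtract~\eqref{eq:weakform:auxiliary} (with $q = Q_\PP$) from the first equation of~\eqref{eq:weakform:reduced}, which gives $a(\u_\PP - \u[Q_\PP],\v) = \dual{\div \v}{p_\PP - Q_\PP}_\Omega$ for all $\v \in \V$. Testing with $\v = \u_\PP - \u[Q_\PP]$, inserting $\Pi_\PP$ by virtue of $p_\PP - Q_\PP \in \P(\PP)$, and using $\Pi_\PP \div \u_\PP = 0$ leads to
\begin{equation*}
\norm{\u_\PP - \u[Q_\PP]}{\V}^2 = -\dual{\Pi_\PP \div \u[Q_\PP]}{p_\PP - Q_\PP}_\Omega.
\end{equation*}
Cauchy--Schwarz, the upper bound $\norm{p_\PP - Q_\PP}{\Omega} \le \Cdiv\,\norm{p_\PP - Q_\PP}{\P}$ from~\eqref{eq:Uzapape2.2}, and $\norm{\u_\PP - \u[Q_\PP]}{\V} = \norm{p_\PP - Q_\PP}{\P}$ (from~\eqref{eq:u to p}) then conclude the bound, after dividing by $\norm{p_\PP - Q_\PP}{\P}$ (the case of equality to zero being trivial).

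For~\eqref{eq2:estimator:div}, I would run the identical argument with $(\u,p)$ in place of $(\u_\PP,p_\PP)$: the lower bound follows from $\div \u = 0$ (second equation in~\eqref{eq:weakform}), $\norm{\div \v}{\Omega}\le\norm{\v}{\V}$, and~\eqref{eq:u to p}; the upper bound comes from subtracting~\eqref{eq:weakform:auxiliary} from~\eqref{eq:weakform}, testing with $\v = \u - \u[q]$, using $\div \u = 0$, and applying Cauchy--Schwarz together with~\eqref{eq:Uzapape2.2}. There is no real obstacle beyond bookkeeping: the whole proof is an exercise in exploiting the discrete and continuous divergence constraints to absorb projections and collapse the energy identity back onto $\norm{\cdot}{\P}$.
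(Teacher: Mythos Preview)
Your argument is correct and is essentially the same as the paper's, modulo notation: where the paper organizes the computation around the Schur complement identity $\div(\u_\PP-\u[Q_\PP]) = S(p_\PP-Q_\PP)$ and the definition $\norm{q}{\P}^2=\dual{Sq}{q}_\Omega$, you route the same identities through the variational formulation and the equality $\norm{\u[r]-\u[r']}{\V}=\norm{r-r'}{\P}$ from~\eqref{eq:u to p}, which is the same computation in different language.
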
%

\begin{proof}
	From the definition of the Schur complement operator, we have that
\begin{equation}\label{eq:fundSDIV}
	\div (\u_\PP-\u[Q_\PP])=S(p_\PP-Q_\PP).
\end{equation}
Taking into account~\eqref{eq:Uzapape2.2}, we obtain that
\begin{align*}
 &\| \div(\u_\PP-\u[Q_\PP]) \|_{\Omega}^2 
~\reff{eq:fundSDIV}= \dual{S (p_\PP-Q_\PP)}{\div(\u_\PP -\u[Q_\PP])}_{\Omega}
 \\&\quad
  = \, \dual{p_\PP - Q_\PP}{\div  (\u_\PP-\u[Q_\PP])}_{\P}
  \le  \| p_\PP - Q_\PP\|_{\P} \,
\| {\div} (\u_\PP -\u[Q_\PP])\|_{\P} 
 \\&\quad
\reff{eq:Uzapape2.2} \le  \| p_\PP - Q_\PP \|_{\P} \, \|\div(\u_\PP -\u[Q_\PP])\|_{\Omega}.
\end{align*}
Together with $\Pi_\PP \div\u_\PP = 0$, this proves that 
\begin{equation*}
\norm{\Pi_\PP\div\u[Q_\PP]}{\Omega} 
\le 
\|
{\div}  (\u_\PP
-\u[Q_\PP]) \|_{\Omega} \le \| p_\PP - Q_\PP\|_{\P} .
\end{equation*}
On the other hand, note that 
$\Pi_\PP(p_\PP-Q_\PP) = p_\PP-Q_\PP$. The norm equivalence~\eqref{eq:Uzapape2.2} and the Cauchy-Schwarz inequality thus imply that
\begin{align*}
&\Cdiv \| p_\PP - Q_\PP \|_{\P}\; \| \Pi_\PP \div\u[Q_\PP]\|_{\Omega} 
~\reff{eq:Uzapape2.2} \ge  \| p_\PP - Q_\PP \|_{\Omega} \; \| \Pi_\PP \div\u[Q_\PP]\|_{\Omega} 
\\&\qquad
 \geq  - \dual{p_\PP - Q_\PP}{ \Pi_\PP \div\u[Q_\PP]}_\Omega 
 \,\,=\, \dual{p_\PP - Q_\PP}{ \Pi_\PP \div(\u_\PP-\u[Q_\PP])}_\Omega 
\\&\qquad
 = \dual{p_\PP - Q_\PP}{ \div(\u_\PP-\u[Q_\PP])}_\Omega
\reff{eq:fundSDIV}=  \dual{S(p_\PP - Q_\PP)}{p_\PP - Q_\PP}_\Omega
 =  \| p_\PP - Q_\PP \|^2_{\P}
\end{align*}
and therefore $\| p_{\PP} - Q_\PP \|_{\P}\leq\Cdiv  \| \Pi_{\PP} \div \u[Q_\PP]\|_{\Omega}$.
This concludes the proof of~\eqref{eq:estimator:div}. 
The proof of~\eqref{eq2:estimator:div} follows along the same lines (with $p=p_\PP$ and hence $0 = \div \u = \div \u_\PP$, and $q=Q_{\PP}$).
\end{proof}

\subsection{Adaptive refinement of pressure triangulation}
\label{subsec:binev}

To refine the partitions $\PP_i$, we apply the following algorithm from \cite[Section~2]{b15} (which slightly differs from the well-known \textit{thresholding second algorithm} of~\cite{bd}).
We will use it in Algorithm~\ref{algorithm:uzawa} with parameters $\PP_i,  \TT_{ijk},  \U_{ijk}\approx \u[P_{ij}]$, and $\vartheta$. 
In this context, the idea of Algorithm~\ref{function:refinePressure} is to achieve that  $\norm{p_{i+1}-P_{ij}}{\P}\simeq \norm{\Pi_{i+1}\div u[P_{ij}]}{\Omega}$ dominates $\norm{p-P_{ij}}{\P}\simeq \norm{\div u[P_{ij}]}{\Omega}$ (see Lemma~\ref{lemma:estimator:div}), and to subsequently proceed to the iteration in $j$ and improve the Uzawa approximation.

\begin{algorithm}\label{function:refinePressure}
\textsc{Input:}  Partition  $\PP':=\PP\in \Tnc$, triangulation $\TT\in \Tnc(\PP) \cap \Tc$, function $\VV_\TT \in \V(\TT)$, adaptivity parameter
$0 < \vartheta \le 1$. 

\noindent
\textsc{Loop:} Iterate the following steps \rm (i)--(iii) until
$\vartheta \, \norm{\div \VV_\TT}{\Omega} \le \norm{\Pi_{\PP'} \div \VV_\TT}{\Omega}$:
\begin{itemize}
\item[\rm (i)] Compute
$e(T):=\inf\set{\norm{\div \VV_\TT-Q}{T}^2}{Q\text{ polynomial of degree }m-1}$ for all $T\in\PP'$, for which $e(T)$ has not been already computed.
\item[\rm (ii)] 
For all $T\in\PP'$ for which $\widetilde e(T)$ has not been already defined, define  $\widetilde e (T):=e(T)$ if $T\in\PP$ and $\widetilde e(T):={e(T)\widetilde e(\widetilde T)}/({e(T)+\widetilde e(\widetilde T)})$ otherwise, where $\widetilde T$ denotes the unique father element of $T$.
\item[\rm (iii)]
Choose one element $T\in\PP'$ with $\widetilde e(T)=\max_{T'\in\PP'}\widetilde e(T')$ and employ newest vertex bisection to generate $\PP':=\bisect(\PP',\{T\})$.
\end{itemize}

\noindent
\textsc{Output:}  Triangulation $\PP'=\binev(\PP,\TT,\VV_\TT;\vartheta) \in \Tnc(\PP)$ with $\TT \in \Tnc(\PP') \cap \Tc$.
\end{algorithm}

According to \cite[Theorem~2.1]{b15}, the output $\PP'$ is a quasi-optimal mesh in $\Tnc(\PP)$ with $\vartheta \, \norm{\div \VV_\TT}{\Omega} \le \norm{\Pi_{\PP'} \div \VV_\TT}{\Omega}$:
This means that for all $\vartheta<\vartheta'<1$ and all $\widetilde\PP \in \Tnc(\PP)$ with $\vartheta' \, \norm{\div \VV_\TT}{\Omega} \le \norm{\Pi_{\widetilde\PP} \div \VV_\TT}{\Omega}$, it holds that $\#\PP' - \#\PP \le \Cbin \, (\#\widetilde\PP - \#\PP)$ for some $\Cbin>1$, which depends only on the ratio $(1-\vartheta'^2)/(1-\vartheta^2)$.
The same reference states that the effort of Algorithm~\ref{function:refinePressure} is $\mathcal{O}(\#\TT\log(\#\TT))$ if $0 < \vartheta < 1$.


To obtain optimal algebraic convergence rates of the error estimator, one has to choose $\vartheta$ sufficiently small and $\vartheta'$ sufficiently close to $\vartheta$; see Theorem~\ref{thm:optimal} below.

\subsection{Adaptive Uzawa algorithm}
\label{subsec:adaptive algorithm}
We investigate the following adaptive Uzawa algorithm, which goes back to \cite[Section~7]{ks}.

\begin{algorithm}\label{algorithm:uzawa}
\textsc{Input:} Conforming initial triangulation $\PP_0 := \TT_{000} := \TT_{\rm init}$ of $\Omega$, initial approximation $P_{00} = 0$, counters $i = j = k = 0$, adaptivity parameters $0\le\kpuno< 1$, $0 < \kpdue < 1$, $0 < {\kptre} < 1$, $0 < \vartheta \le 1$, $0<\theta\le1$, and $\Cmark\ge1$.

\noindent
\textsc{Loop:} Iterate the following steps~{\rm(i)--(iv)}:
\begin{itemize}

\item[\rm(i)]   Compute $\U_{ijk}\in \V_{ijk}$ as well as (all local contributions of) the corresponding error estimator $\eta_{ijk} = \eta(\TT_{ijk};\U_{ijk},P_{ij})$ such that the exact Galerkin approximation $\gal{ijk}[ij]\in\V_{ijk}$ of $\u_{ij}$ satisfies that $\norm{\gal{ijk}[ij]-\U_{ijk}}{\V}\le  \kpuno\,\eta_{ijk}$.

\item[\rm(ii)] {\tt while} \quad $\eta_{ijk} + \norm{\Pi_i \div \U_{ijk}}{\Omega}
\le \kpdue \, \big( \eta_{ijk} + \norm{\div \U_{ijk}}{\Omega} \big)$ \quad {\tt do}

\begin{itemize}
\item[$\bullet$] Determine $\PP_{i+1} := \binev(\PP_i,  \TT_{ijk},   \U_{ijk};\vartheta)$ by Algorithm~\ref{function:refinePressure}.
\item[$\bullet$] Define $P_{(i+1)0} := P_{ij}$, and $\TT_{(i+1)00} := \TT_{ijk}$.
\item[$\bullet$] 
Update counters $(i,j,k) \mapsto (i+1,0,0)$.
\end{itemize}
{\tt end while}

\item[\rm(iii)] {\tt if} \quad $\eta_{ijk} \le \kptre \, \norm{\Pi_i \div \U_{ijk}}{\Omega}$ \quad {\tt then}

\begin{itemize}
\item[$\bullet$] Define $P_{i(j+1)} := P_{ij} - \Pi_i \div \U_{ijk}\in\P_i$, and $\TT_{i(j+1)0} := \TT_{ijk}$.
\item[$\bullet$] 
Update counters $(i,j,k) \mapsto (i,j+1,0)$.
\end{itemize}

\item[\rm(iv)] {\tt else}

\begin{itemize}
\item[$\bullet$] Determine a set $\MM_{ijk} \subseteq \TT_{ijk}$ of (up to the fixed factor $\Cmark$) minimal cardinality, which satisfies the D\"orfler marking criterion
\begin{align}\label{eq:Doerfler}
 \theta \, \eta_{ijk}^2 \le \eta(\MM_{ijk};P_{ij},\U_{ijk})^2.
\end{align}
\item[$\bullet$] Generate $\TT_{ij(k+1)} := {\tt refine}(\TT_{ijk},\MM_{ijk})$.
\item[$\bullet$] Update counters $(i,j,k) \mapsto (i,j,k+1)$.
\end{itemize}
{\tt end if}\qed
\end{itemize}
\end{algorithm}

\begin{remark}\label{remark:single_index}
 The actual implementation of Algorithm~\ref{algorithm:uzawa} will replace the triple indices $(i,j,k)$ by one single index $n\in\N_0$, which is increased in each step~{\rm(ii)--(iv)}. However, the present statement of the algorithm makes the numerical analysis more accessible.\qed
\end{remark}

\begin{lemma}\label{lemma:set:Q}
Define the index set $\QQ := \set{(i,j,k) \in \N_0^3}{\U_{ijk} \text{ is defined by Algorithm~\ref{algorithm:uzawa}}}$. Then, for $(i,j,k) \in \N_0^3$, there hold the following assertions~{\rm(a)--(c)}:
\begin{itemize}
\item[\rm(a)] If $(i,j,k+1) \in \QQ$, then $(i,j,k) \in \QQ$.
\item[\rm(b)] If $(i,j\!+\!1,0) \in \QQ$, then $(i,j,0) \in \QQ$ and $\kk(i,j) := \max\set{k\in\N_0\!\!}{\!(i,j,k) \in \QQ} < \infty$.
\item[\rm(c)] If $(i+1,0,0) \in \QQ$, then $(i,0,0) \in \QQ$ and $\jj(i) := \max\set{j\in\N_0}{(i,j,0) \in \QQ} < \infty$.
\end{itemize}
Throughout, we shall make the following conventions for the triple index: If we write $\eta_{ij\kk}$ etc.\ (see, e.g., Lemma~\ref{lemma:convergence:j}), then (implicitly) $\kk = \kk(i,j)$. If we write $\eta_{i\jj\kk}$ etc.\ (see, e.g., Lemma~\ref{lemma:convergence:i}),  then (implicitly) $\jj = \jj(i)$ and $\kk = \kk(i,\jj)$.
\end{lemma}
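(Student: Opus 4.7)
The plan is to prove all three claims by inspecting the control flow of Algorithm~\ref{algorithm:uzawa}. The underlying observation is that after $\U_{ijk}$ and $\eta_{ijk}$ have been computed in step~(i), exactly one of steps~(ii)--(iv) performs a counter update: step~(ii) transitions $(i,j,k)\mapsto (i+1,0,0)$, step~(iii) transitions $(i,j,k)\mapsto (i,j+1,0)$, and step~(iv) transitions $(i,j,k)\mapsto(i,j,k+1)$. These three update patterns are pairwise incompatible, so every triple in $\QQ$ other than the initial triple $(0,0,0)$ has a uniquely determined predecessor in $\QQ$.

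For part~(a), the only update rule that produces an index of the form $(i,j,k+1)$ is step~(iv), whose predecessor is $(i,j,k)$. Hence $(i,j,k+1)\in\QQ$ immediately forces $(i,j,k)\in\QQ$. For part~(b), I would argue as follows. An index $(i,j+1,0)$ can only arise via step~(iii), transitioning from some $(i,j,k^\star)\in\QQ$; in particular $(i,j,k^\star)\in\QQ$. Once step~(iii) has fired at $(i,j,k^\star)$, no subsequent iteration of the outer loop produces a triple with first two coordinates $(i,j)$, because the $j$-coordinate has strictly increased and will only be reset to $0$ upon a further increment of the $i$-coordinate. Therefore $\kk(i,j)=k^\star<\infty$, and iterated application of part~(a) yields $(i,j,k)\in\QQ$ for all $k=0,1,\dots,k^\star$; in particular $(i,j,0)\in\QQ$.

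For part~(c), the argument is entirely analogous, with step~(ii) replacing step~(iii) and part~(b) replacing part~(a). An index $(i+1,0,0)\in\QQ$ can only be produced by step~(ii) from some predecessor $(i,j^\star,k^\star)\in\QQ$, and no further triples with first coordinate $i$ are ever generated. Hence $\jj(i)=j^\star<\infty$, and part~(b) then gives $(i,j,0)\in\QQ$ for all $j=0,\dots,\jj(i)$; in particular $(i,0,0)\in\QQ$.

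I expect no real technical obstacle: the statement is purely combinatorial and reduces to the fact that each of the three counter updates in Algorithm~\ref{algorithm:uzawa} is uniquely identified by its pattern of coordinate changes, so the predecessor of any non-initial triple in $\QQ$ is unambiguous. The only minor subtlety worth mentioning is that step~(ii) is written as a \texttt{while} loop; however, whether it is interpreted as a single conditional update followed by a return to step~(i), or as a genuine loop performing several consecutive $i$-increments, the combinatorial structure of $\QQ$ — and thus the present proof — is unaffected.
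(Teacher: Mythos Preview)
Your proof is correct and follows the same approach as the paper: both arguments rest on the observation that each execution of steps~(ii)--(iv) in Algorithm~\ref{algorithm:uzawa} increments exactly one of the counters $i$, $j$, $k$ by one, so predecessors in $\QQ$ are uniquely determined. The paper condenses this into a single sentence, whereas you spell out the predecessor analysis for each of the three cases explicitly; your treatment of the \texttt{while} subtlety in step~(ii) is a welcome clarification that the paper omits.
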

\begin{proof}
Each step~{\rm(ii)--(iv)} of the algorithm increases either $i$ or $j$ or $k$ by one.
\end{proof}

\begin{remark}%
Unlike the algorithm from~\cite{ks}, our formulation of the adaptive Uzawa algorithm  avoids any special treatment of the data oscillations (i.e., to resolve $\f$ by a piecewise polynomial in an additional  loop).
\qed
\end{remark}

\begin{remark}
We note that the choice $\U_{ijk} := \gal{ijk}[ij]$ (i.e., $\kpuno = 0$) is admissible in step~{\rm (i)}  of Algorithm~\ref{algorithm:uzawa}.
In the spirit of~\cite{abem+solve}, one can also employ the PCG algorithm~\cite[Algorithm~11.5.1]{matcomp} with optimal preconditioner.
With $\kappa_1'$ and an additional index $\ell \in \N_0$ for the PCG iteration and initially $\ell := 0$, repeat the following three steps, until $U_{ijk} := U_{ijk(\ell+1)}$ satisfies that $\norm{\U_{ijk(\ell+1)}-\U_{ijk\ell}}{\V}\le \kpuno'\,\eta_{ijk(\ell+1)}$:
\begin{itemize}
\item[$\bullet$] Do one PCG step to obtain $\U_{ijk(\ell+1)}\in\V_{ijk}$ from $\U_{ijk\ell}\in \V_{ijk}$.
\item[$\bullet$] Compute (all local contributions of) the estimator $\eta_{ijk(\ell+1)} := \eta(\TT_{ijk};\U_{ijk(\ell+1)},P_{ij})$.
\item[$\bullet$] Update counters $(i,j,k,\ell) \mapsto (i,j,k,\ell+1)$.
\end{itemize}
If the preconditioner is optimal, i.e., the preconditioned linear system has uniformly bounded condition number, then it follows that PCG is a uniform contraction~\cite[Section~2.6]{abem+solve}: There exists $0 < q_{\rm pcg} < 1$ such that
\begin{align*}
 \norm{U_{\TT_{ijk}}[P_{ij}] - \U_{ijk(\ell+1)}}{\V}
 \le q_{\rm pcg} \, \norm{U_{\TT_{ijk}}[P_{ij}] - \U_{ijk\ell}}{\V}
 \quad \text{for all } \ell \in \N_0.
\end{align*}
Hence, the PCG loop terminates, and the triangle inequality proves that
\begin{align*}
 \norm{U_{\TT_{ijk}}[P_{ij}] - \U_{ijk(\ell+1)}}{\V}
 \le \frac{q_{\rm pcg}}{1 - q_{\rm pcg}} \, \norm{\U_{ijk(\ell+1)}-\U_{ijk\ell}}{\V}
 \le \frac{q_{\rm pcg}}{1 - q_{\rm pcg}} \, \kpuno' \, \eta_{ijk(\ell+1)},
\end{align*}
i.e., the criterion of step~{\rm(i)} of Algorithm~\ref{algorithm:uzawa} is satisfied for $\kpuno := \kpuno' q_{\rm pcg} / ( 1 - q_{\rm pcg} )$.\qed
\end{remark}%


\section{Convergence}\label{sec:convergence}

\subsection{Main theorem on linear convergence}\label{sec:convergence thm}

To state linear convergence, we need an ordering of the set $\QQ$ from Lemma~\ref{lemma:set:Q}: For $(i,j,k), (i',j',k') \in \QQ$, write $(i',j',k') < (i,j,k)$ if the index $(i',j',k')$ appears earlier in Algorithm~\ref{algorithm:uzawa} than $(i,j,k)$.
Define
\begin{align}
 |(i,j,k)| := \#\set{(i',j',k') \in \QQ}{(i',j',k') < (i,j,k)} \in \N_0.
\end{align}
Note that $|(i,j,k)|$ coincides with the single index $n$ from Remark~\ref{remark:single_index}.
Then, we have the following theorem. The proof is given in Section~\ref{sec:proof of linconv}. 

\begin{theorem}\label{theorem:linearconvergence}
Let $0<\kpuno<\theta^{1/2}/\Cstab$. 
Suppose that $0 < \kpdue, \kptre < 1$ are sufficiently small as in  Lemma~\ref{lemma:convergence:j} and  Lemma~\ref{lemma:convergence:i} below. Let $0<\vartheta \le 1$ and $0<\theta\le 1$. 
Then, there exist constants $\Clin > 0$ and $0 < \qlin < 1$ such that
\begin{align}\label{eq:linconv}
 \eta_{ijk} + \norm{\div \U_{ijk}}{\Omega}
 \le \Clin \qlin^{|(i,j,k)|-|(i',j',k')|} \, \big( \eta_{i'j'k'} + \norm{\div \U_{i'j'k'}}{\Omega} \big)
\end{align}
for all $(i',j',k'), (i,j,k) \in \QQ$ with $(i',j',k') < (i,j,k)$. The constants $\Clin$ and $\qlin$ depend only on  
the domain $\Omega$,  $\gamma$-shape regularity, the polynomial degree $m$, and the parameters $\kpuno, \kpdue$, $\kptre$, $\vartheta$, and $\theta$.
\end{theorem}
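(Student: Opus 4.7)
I would introduce the quasi-error
\begin{align*}
\Delta_{ijk}^2 \ :=\ \eta_{ijk}^2\ +\ \mu_1\,\norm{p_i-P_{ij}}{\P}^2\ +\ \mu_2\,\norm{p-p_i}{\P}^2
\end{align*}
with weights $0<\mu_1<\mu_2$ to be tuned, and show that it strictly contracts at every single step {\rm(ii)}--{\rm(iv)} of Algorithm~\ref{algorithm:uzawa}. Combined with the equivalence $\Delta_{ijk}^2\simeq(\eta_{ijk}+\norm{\div\U_{ijk}}{\Omega})^2$ (which follows from reliability, Lemma~\ref{lem:reliability}, and the divergence--pressure bounds of Lemma~\ref{lemma:estimator:div}), a uniform contraction of $\Delta^2$ by $q_\star<1$ implies~\eqref{eq:linconv} with $\qlin:=q_\star^{1/2}$.

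For a $k$-step~{\rm(iv)}, both $P_{ij}$ and $\PP_i$ are frozen, so only $\eta_{ijk}^2$ is affected. The classical CKNS argument combining stability (Lemma~\ref{lemma:stability}), reduction (Lemma~\ref{lemma:reduction}), D\"orfler marking~\eqref{eq:Doerfler}, and discrete reliability (Lemma~\ref{lemma:discrete_reliability}) yields an estimator reduction $\eta_{ij(k+1)}^2\le\qk\,\eta_{ijk}^2$, with the inexact-Galerkin perturbation from step~{\rm(i)} absorbed thanks to $\kpuno<\theta^{1/2}/\Cstab$. For a $j$-step~{\rm(iii)}, the triggering condition $\eta_{ij\kk}\le\kptre\norm{\Pi_i\div\U_{ij\kk}}{\Omega}$ turns $P_{i(j+1)}=P_{ij}-\Pi_i\div\U_{ij\kk}$ into a perturbed Uzawa step for the map $Q\mapsto Q-\Pi_i\div\u[Q]$ whose unique fixed point is $p_i$ (by~\eqref{eq:weakform:reduced}); using identity~\eqref{eq:fundSDIV}, the ellipticity of $S$ on $\P_i$ with the norm equivalence~\eqref{eq:Uzapape2.2}, and~\eqref{eq:u to p} together with Lemma~\ref{lem:reliability} to pass from exact to inexact Galerkin velocity, one obtains $\norm{p_i-P_{i(j+1)}}{\P}\le\qj\,\norm{p_i-P_{ij}}{\P}$ for $\kptre$ small enough. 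The bounded growth of $\eta$ caused by the pressure update (same triangulation) is controlled via Lemma~\ref{lemma:stability} with $\norm{P_{i(j+1)}-P_{ij}}{\P}=\norm{\Pi_i\div\U_{ij\kk}}{\Omega}$ and absorbed by choosing $\mu_1$ small.

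For an $i$-step~{\rm(ii)}, $P_{(i+1)0}=P_{ij}$ and $\TT_{(i+1)00}=\TT_{ij\kk}$ leave $\eta_{(i+1)00}=\eta_{ij\kk}$ unchanged. Algorithm~\ref{function:refinePressure} guarantees $\vartheta\norm{\div\U_{ij\kk}}{\Omega}\le\norm{\Pi_{i+1}\div\U_{ij\kk}}{\Omega}$, which via Lemmas~\ref{lemma:estimator:div} and~\ref{lem:reliability} translates into $\norm{p_{i+1}-P_{ij}}{\P}^2\gtrsim\vartheta^2\norm{p-P_{ij}}{\P}^2-C\eta_{ij\kk}^2$. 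The nested Pythagoras identity
\begin{align*}
\norm{p-P_{ij}}{\P}^2\ =\ \norm{p-p_i}{\P}^2+\norm{p_i-P_{ij}}{\P}^2\ =\ \norm{p-p_{i+1}}{\P}^2+\norm{p_{i+1}-P_{ij}}{\P}^2
\end{align*}
in the $\norm{\cdot}{\P}$-inner product (nested subspaces $\P_i\subset\P_{i+1}\subset\P$) then forces $\norm{p-p_{i+1}}{\P}^2\le(1-c\vartheta^2)\norm{p-P_{ij}}{\P}^2+C\eta_{ij\kk}^2$. Substituting into $\Delta_{(i+1)00}^2$ and reshuffling through Pythagoras, the net change in the lower-order part is $(\mu_1-\mu_2)(\norm{p-p_i}{\P}^2-\norm{p-p_{i+1}}{\P}^2)$, which is strictly negative precisely when $\mu_1<\mu_2$. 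The triggering condition~{\rm(ii)}, which via Lemma~\ref{lemma:estimator:div} reads $\eta_{ij\kk}+\norm{p_i-P_{ij}}{\P}\lesssim\kpdue\norm{p-P_{ij}}{\P}$, lets one absorb the $C\eta_{ij\kk}^2$-perturbation into the gain whenever $\kpdue$ is small enough.

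The main obstacle is the joint quantitative tuning of the weights $\mu_1<\mu_2$ and of $\kpdue,\kptre$ (this is essentially the content of Lemmas~\ref{lemma:convergence:j}--\ref{lemma:convergence:i} referenced in the statement): one must simultaneously ensure that (a) the bounded growth of $\eta$ on $j$-steps, estimated via Lemma~\ref{lemma:stability} through the size of the pressure update, is strictly dominated by the $\qj$-contraction of $\mu_1\norm{p_i-P_{ij}}{\P}^2$; and (b) the Pythagoras rearrangement on $i$-steps produces strict decrease of $\mu_1\norm{p_i-P_{ij}}{\P}^2+\mu_2\norm{p-p_i}{\P}^2$ despite the typical \emph{growth} of $\norm{p_i-P_{ij}}{\P}\mapsto\norm{p_{i+1}-P_{ij}}{\P}$, which forces $\mu_2-\mu_1$ to be not too small and $\kpdue$ to be small relative to $\vartheta$. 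A secondary technicality is that $\U_{ijk}$ is only an approximate Galerkin solution; every Pythagoras identity in the velocity variable is therefore perturbed by a term $\lesssim\kpuno\eta_{ijk}$, which is absorbed by the standing hypothesis $\kpuno<\theta^{1/2}/\Cstab$.
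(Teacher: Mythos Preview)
Your proposal follows a genuinely different route from the paper. The paper does \emph{not} construct a single quasi-error that contracts in every step. Instead, it reformulates~\eqref{eq:linconv} via the equivalence ``linear convergence $\Longleftrightarrow$ uniform summability'' (Lemma~\ref{lem:uniform summability}) and proves
\[
\sum_{(i,j,k)\ge (i',j',k')}\mu_{ijk}\ \lesssim\ \mu_{i'j'k'},\qquad \mu_{ijk}:=\eta_{ijk}+\norm{\div\U_{ijk}}{\Omega},
\]
by summing first over $k$ (using Lemma~\ref{lemma:convergence:k} plus the equivalence~\eqref{eq1:lemma:convergence:k}), then over $j$ (Lemma~\ref{lemma:convergence:j} plus~\eqref{eq1:lemma:convergence:j}), then over $i$ (Lemma~\ref{lemma:convergence:i} plus~\eqref{eq1:lemma:convergence:i}), with the boundary terms between loops handled by the quasi-monotonicity Lemma~\ref{lemma:monotone}. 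This modular approach decouples the three loops and never needs to balance weights.

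Your approach, by contrast, has a concrete gap at the $k$-step. The CKNS machinery does \emph{not} give a one-step estimator contraction $\eta_{ij(k+1)}^2\le q_1\,\eta_{ijk}^2$; it gives contraction of a combination $\eta^2+\gamma\norm{\u_{ij}-\U^\star_{ijk}}{\V}^2$ (and, with the inexact solve, this is precisely what yields~\eqref{eq0:lemma:convergence:k} with the prefactor $C_1\ge 1$). Since your $\Delta_{ijk}^2$ contains no velocity-error term, the only contracting summand in a $k$-step is $\eta_{ijk}^2$, and you cannot conclude $\Delta_{ij(k+1)}^2\le q\,\Delta_{ijk}^2$ from the ingredients you list. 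You would have to augment $\Delta^2$ by a term $\mu_0\norm{\u_{ij}-\U^\star_{ijk}}{\V}^2$ and then track its behaviour also across $j$- and $i$-steps (where $\u_{ij}$ changes).

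A second issue is the weight balancing. In a $j$-step the summand $\mu_2\norm{p-p_i}{\P}^2$ is \emph{exactly} frozen, so any strict contraction of $\Delta^2$ must come from a gain in the other summands large enough to absorb $(1-q)\mu_2\norm{p-p_i}{\P}^2$. Using the $j$-regime bound $\norm{p-p_i}{\P}\lesssim\kpdue^{-1}\norm{p_i-P_{ij}}{\P}$, this forces $\mu_1$ to be \emph{large} (so that the Uzawa gain $\mu_1(1-q_2^2)\norm{p_i-P_{ij}}{\P}^2$ dominates), not small as you write; and this has to be reconciled with the $i$-step requirement $\mu_1<\mu_2$ and with the $k$-step requirement that $\mu_1,\mu_2$ be small enough relative to the $\kpdue,\kptre$-dependent constants in~\eqref{eq1:lemma:convergence:k}. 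This three-way balancing may be feasible (with step-dependent contraction factors close to $1$), but it is considerably more delicate than your sketch suggests, and the paper's summability route sidesteps it entirely.
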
%

\begin{remark}
The adaptive Uzawa algorithm from~\cite{bmn02} employs only one triangulation for both, the pressure and the velocity.
Similarly, we can additionally update $\PP_i:=\TT_{ij(k+1)}$ in step~{\rm(iv)} of Algorithm~\ref{algorithm:uzawa}.
Since $0<\kpdue<1$ and $\Pi_i\div U_{ijk}=\nabla U_{ijk}$, then the condition in  {\rm(ii)} will always fail.
We note that the convergence analysis of Section~\ref{sec:linconv} and in particular, linear convergence (Theorem~\ref{theorem:linearconvergence}) clearly remain valid for this modified algorithm, while our proof of optimal convergence rates (Theorem~\ref{thm:optimal}) fails.\qed
\end{remark}

\subsection{Auxiliary results}\label{sec:linconv}

The first lemma provides links between the exact Galerkin solutions $\gal{ijk}[ij]$ and its approximations $\U_{ijk}$.

\begin{lemma}\label{lemma:equivalent est}
Let $(i,j,k)\in\QQ$. 
For all $\mathcal{S}\subseteq\TT_{ijk}$, it holds that 
\begin{align}\label{eq:stability2}
|\eta(\mathcal{S};\gal{ijk}[ij],P_{ij})-\eta(\mathcal{S};\U_{ijk},P_{ij})|\le \kpuno\,\Cstab \,\eta_{ijk}, 
\end{align}
where $\Cstab>0$ is the constant from Lemma~\ref{lemma:stability}.
This particularly yields the equivalence
\begin{align}\label{eq:equivalent est}
(1-\kpuno\,\Cstab)\,\eta_{ijk}\le \etagal{ijk}[ij]\le (1+\kpuno \,\Cstab) \,\eta_{ijk}.
\end{align}
as well as the reliability estimates
\begin{align}\label{eq2:reliability:velocity}
\norm{\u_{ij}-\U_{ijk}}{\V}&\le \Crel'(\kpuno) \, \eta_{ijk},
\\
\label{eq2:reliability:reduced}
\norm{\u_i-\U_{ijk}}{\V}+\norm{p_i-P_{ij}}{\P}&\le \Crel'(\kpuno)\,\big(\eta_{ijk}+\norm{\Pi_i\div\U_{ijk}}{\Omega}\big),
\\
\label{eq2:reliability:stokes}
\norm{\u-\U_{ijk}}{\V}+\norm{p-P_{ij}}{\P}&\le \Crel'(\kpuno) \,\big(\eta_{ijk}+\norm{\div\U_{ijk}}{\Omega}\big),
\end{align}
where $\Crel'(\kpuno):=((1+\kpuno\Cstab)\Crel+\kpuno(\Crel+1))\ge \Crel$ with  $\Crel>0$ from Lemma~\ref{lem:reliability}.
\end{lemma}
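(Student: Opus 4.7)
The plan is to bootstrap from the exact Galerkin solution $\gal{ijk}[ij]$ to the computed approximation $\U_{ijk}$ using the algorithmic accuracy $\|\gal{ijk}[ij]-\U_{ijk}\|_{\V}\le\kpuno\,\eta_{ijk}$ from step~{\rm(i)} of Algorithm~\ref{algorithm:uzawa}, combined with the triangle inequality. Every assertion will reduce to this single observation plus the results already collected in Section~\ref{section:estimation}.

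First, I will prove the perturbation bound~\eqref{eq:stability2}. Applying Lemma~\ref{lemma:stability} with $\widehat{\TT}=\TT=\TT_{ijk}$, $\VV_{\widehat{\TT}}=\gal{ijk}[ij]$, $\W_\TT=\U_{ijk}$, and $Q_\PP=R_\PP=P_{ij}$, the pressure contribution drops out and only the velocity term survives, yielding $\Cstab\,\|\gal{ijk}[ij]-\U_{ijk}\|_{\V}\le\kpuno\Cstab\,\eta_{ijk}$. Since the stability estimate is proved element by element, the same bound holds for any subset $\mathcal{S}\subseteq\TT_{ijk}$, which is~\eqref{eq:stability2}. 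Specializing to $\mathcal{S}=\TT_{ijk}$ and invoking the reverse triangle inequality immediately produces the two-sided equivalence~\eqref{eq:equivalent est}.

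Second, for the velocity reliability~\eqref{eq2:reliability:velocity}, I will apply~\eqref{eq:reliability:velocity} of Lemma~\ref{lem:reliability} to the exact Galerkin approximation $\gal{ijk}[ij]$ of $\u_{ij}=\u[P_{ij}]$, then bound the resulting $\etagal{ijk}[ij]$ by $(1+\kpuno\Cstab)\eta_{ijk}$ via~\eqref{eq:equivalent est}, and finally absorb the algorithmic error through $\|\u_{ij}-\U_{ijk}\|_{\V}\le\|\u_{ij}-\gal{ijk}[ij]\|_{\V}+\|\gal{ijk}[ij]-\U_{ijk}\|_{\V}$. The resulting constant $(1+\kpuno\Cstab)\Crel+\kpuno$ is dominated by $\Crel'(\kpuno)$ because $\Crel\ge1$.

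Third, for the reduced and the full reliability~\eqref{eq2:reliability:reduced} and~\eqref{eq2:reliability:stokes}, I will proceed analogously, invoking~\eqref{eq:reliability:reduced} respectively~\eqref{eq:reliability:stokes} applied to $\gal{ijk}[ij]$. The only new ingredient is the divergence term on the right-hand side, which I will transfer from $\gal{ijk}[ij]$ to $\U_{ijk}$ by the splitting
\begin{equation*}
  \norm{\Pi_i\div\gal{ijk}[ij]}{\Omega}\le\norm{\Pi_i\div\U_{ijk}}{\Omega}+\norm{\Pi_i\div(\gal{ijk}[ij]-\U_{ijk})}{\Omega},
\end{equation*}
using $L^2$-contractivity of $\Pi_i$ together with~\eqref{eq:appendix:div} to estimate the last summand by $\|\gal{ijk}[ij]-\U_{ijk}\|_{\V}\le\kpuno\eta_{ijk}$; the full-Stokes case~\eqref{eq2:reliability:stokes} is identical, with $\Pi_i$ removed. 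A further triangle inequality on the velocity contributes one more $\kpuno\eta_{ijk}$. Collecting the estimator equivalence (factor $(1+\kpuno\Cstab)\Crel$), the divergence correction (extra $\kpuno\Crel\,\eta_{ijk}$), and the velocity correction (extra $\kpuno\,\eta_{ijk}$) reassembles precisely to $\Crel'(\kpuno)=(1+\kpuno\Cstab)\Crel+\kpuno(\Crel+1)$.

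There is no substantive obstacle here: stability, reliability, and the algorithmic accuracy are all in place, and the argument is a chain of triangle inequalities. The only point demanding attention is the bookkeeping of the three $\kpuno$-contributions so that their coefficients reassemble exactly into the stated constant $\Crel'(\kpuno)$; in particular, one has to be careful that the divergence correction yields a factor $\Crel$ (because it is absorbed \emph{before} reliability is applied) whereas the velocity correction contributes only a bare $\kpuno$.
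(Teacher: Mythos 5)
Your proof is correct and follows essentially the same route as the paper: it derives \eqref{eq:stability2} from Lemma~\ref{lemma:stability} together with the algorithmic accuracy $\|\gal{ijk}[ij]-\U_{ijk}\|_{\V}\le\kpuno\,\eta_{ijk}$ from step~(i), specializes to $\mathcal{S}=\TT_{ijk}$ for \eqref{eq:equivalent est}, and then applies the three reliability estimates of Lemma~\ref{lem:reliability} to the exact Galerkin solution $\gal{ijk}[ij]$ and absorbs the remaining differences via triangle inequality. If anything, your bookkeeping of the three $\kpuno$-contributions (estimator equivalence, divergence correction, velocity correction) to reconstruct $\Crel'(\kpuno)$ is more explicit than the paper's rather compressed chain, which folds the divergence correction silently into the generous constant; the minor side remark that $(1+\kpuno\Cstab)\Crel+\kpuno\le\Crel'(\kpuno)$ ``because $\Crel\ge1$'' is unnecessarily restrictive (nonnegativity of $\kpuno\Crel$ suffices), but does not affect the argument.
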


\begin{proof}
To shorten notation, we set $\etagals{ijk}[ij]:=\etagal{ijk}[ij]$.
The stability \eqref{eq:stability2} follows from  Lemma~\ref{lemma:stability} and $\norm{\gal{ijk}[ij]-\U_{ijk}}{\V}\le \kpuno\, \eta_{ijk}$, which is guaranteed by step~(i) of Algorithm~\ref{algorithm:uzawa}.
Taking $\mathcal{S}=\TT_{ijk}$, \eqref{eq:equivalent est} is an immediate consequence.
To see \eqref{eq2:reliability:velocity}, we use reliability \eqref{eq:reliability:velocity}, step~(i) of Algorithm~\ref{algorithm:uzawa}, and \eqref{eq:equivalent est} to see that
\begin{align*}
\norm{\u_{ij}-\U_{ijk}}{\V}\reff{eq:reliability:velocity}\le \Crel \,\etagals{ijk}[ij]+ \norm{\gal{ijk}[ij]-\U_{ijk}}{\V} \reff{eq:equivalent est}\le ((1+\kpuno\Cstab)\Crel+\kpuno) \,\eta_{ijk}.
\end{align*}
To prove \eqref{eq2:reliability:reduced}, we apply \eqref{eq:reliability:reduced}
\begin{align*}
\norm{\u_i-\U_{ijk}}{\V}+\norm{p_i-P_{ij}}{\P}&\reff{eq:reliability:reduced}\le \Crel \big(\etagals{ijk}[ij]+\norm{\Pi_i\div\gal{ijk}[ij]}{\Omega}\big)+\norm{\gal{ijk}[ij]-\U_{ijk}}{\V} \\
&\reff{eq:equivalent est}\le ((1+\kpuno\Cstab)\Crel+\kpuno) \,\eta_{ijk}+\Crel \,\norm{\Pi_i\div\U_{ijk}}{\Omega}.
\end{align*}
 Similarly, \eqref{eq2:reliability:stokes} follows  from \eqref{eq:reliability:stokes}.
\end{proof}

The following three lemmas prove that Algorithm~\ref{algorithm:uzawa} leads to contraction if either $i$, $j$, or $k$ is increased. 
Throughout, let $0<\vartheta\le1$, $0<\theta\le1$, and, if not stated otherwise, $0\le\kpuno<1$, $0<\kpdue, \kptre<1$.

\begin{lemma}\label{lemma:convergence:k}
Let $(i,j,0) \in \QQ$ and define $\kk := \max\set{k\in\N_0\!\!}{\!(i,j,k) \in \QQ} \in \N_0 \cup \{\infty\}$.
If $0\le\kpuno<\theta^{1/2}/\Cstab$, then, there exist constants $0 < \qk < 1$ and $\Ck > 0$, which depend only on $\gamma$-shape regularity,
the polynomial degree $m$, $\kpuno$, and  $\theta$, such that
\begin{align}\label{eq0:lemma:convergence:k}
 \eta_{ij(k+n)}
 \le \Ck \, \qk^n \, \eta_{ijk} 
 \quad \text{for all } k,n \in \N_0 \text{ with } k \le k+n \le \kk.
\end{align}                                                                                                                                                                                                                                                                                                                                                                                        
Moreover, it holds that
\begin{align}\label{eq1:lemma:convergence:k}
 \eta_{ijk} \le \eta_{ijk} + \norm{\div \U_{ijk}}{\Omega}
 \le \frac{1}{\kpdue} \, \Big(1 + \frac{1}{\kptre}\Big) \, \eta_{ijk} 
 \quad \text{for all } 0 \le k < \kk.
\end{align}
If $\kk = \infty$, this yields that $\norm{\u-\U_{ijk}}{\V} + \norm{p-P_{ij}}{\P}\to0$ as $k \to \infty$ with~$p = p_i = P_{ij}$.
\end{lemma}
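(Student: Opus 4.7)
The proof naturally splits into the three claims. \emph{Bound~\eqref{eq1:lemma:convergence:k}} is the easiest. By the structure of Algorithm~\ref{algorithm:uzawa}, for any $k<\kk$ the counter $k$ is incremented in step~(iv), so neither the while-condition of step~(ii) nor the if-condition of step~(iii) is met. The negation of step~(iii) yields $\norm{\Pi_i\div\U_{ijk}}{\Omega}<\kptre^{-1}\eta_{ijk}$, and inserting this into the negation of step~(ii), rearranged as $\eta_{ijk}+\norm{\div\U_{ijk}}{\Omega}\le\kpdue^{-1}(\eta_{ijk}+\norm{\Pi_i\div\U_{ijk}}{\Omega})$, gives the upper bound in~\eqref{eq1:lemma:convergence:k}; the lower bound is trivial.

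\emph{For~\eqref{eq0:lemma:convergence:k}}, the crucial observation is that during the $k$-loop the pressure $P_{ij}$ and hence the right-hand side $\f-\nabla P_{ij}$ of the vector Poisson problem~\eqref{eq:weakform:auxiliary} remain fixed. The plan is therefore to apply the CKNS estimator-reduction argument~\cite{ckns} to the \emph{exact} Galerkin iterates $\gals{ijk}[ij]:=\gal{ijk}[ij]$ with associated estimators $\etagals{ijk}[ij]$: Pythagoras in the $a$-energy norm for the Poisson problem, the reduction of Lemma~\ref{lemma:reduction}, the stability of Lemma~\ref{lemma:stability}, and the discrete reliability of Lemma~\ref{lemma:discrete_reliability} combine in the standard way to deliver uniform contraction of a quasi-error of the form $\norm{\u_{ij}-\gals{ijk}[ij]}{\V}^2+\lambda\,(\etagals{ijk}[ij])^2$, \emph{provided} that a Dörfler criterion for $\etagals{ijk}[ij]$ can be established from the one actually enforced for $\eta_{ijk}$ in step~(iv).

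This transfer is the only nontrivial point. Applying~\eqref{eq:stability2} with $\mathcal{S}=\MM_{ijk}$ together with the Dörfler criterion~\eqref{eq:Doerfler} yields
\begin{align*}
\theta^{1/2}\,\eta_{ijk} \le \eta(\MM_{ijk};\U_{ijk},P_{ij}) \le \eta(\MM_{ijk};\gals{ijk}[ij],P_{ij}) + \kpuno\,\Cstab\,\eta_{ijk},
\end{align*}
and combining this with $(1+\kpuno\Cstab)\eta_{ijk}\ge\etagals{ijk}[ij]$ from~\eqref{eq:equivalent est} produces a Dörfler criterion for the exact estimator with reduced parameter $\theta':=((\theta^{1/2}-\kpuno\Cstab)/(1+\kpuno\Cstab))^2$. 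This is precisely where the hypothesis $\kpuno<\theta^{1/2}/\Cstab$ enters, ensuring $\theta'>0$. Linear convergence of $\etagals{ijk}[ij]$ then follows from the standard CKNS estimate, and the two-sided bound~\eqref{eq:equivalent est} transports it back to $\eta_{ijk}$, giving~\eqref{eq0:lemma:convergence:k}.

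\emph{The final assertion} is immediate: if $\kk=\infty$, then~\eqref{eq0:lemma:convergence:k} yields $\eta_{ijk}\to 0$, and~\eqref{eq1:lemma:convergence:k} then forces $\eta_{ijk}+\norm{\div\U_{ijk}}{\Omega}\to 0$. The Stokes reliability estimate~\eqref{eq2:reliability:stokes} of Lemma~\ref{lemma:equivalent est} delivers $\norm{\u-\U_{ijk}}{\V}+\norm{p-P_{ij}}{\P}\to 0$. Since $P_{ij}\in\P_i$ is independent of $k$, passing to the limit identifies $p=P_{ij}\in\P_i$, and the best-approximation characterisation of $p_i$ finally forces $p=p_i=P_{ij}$.
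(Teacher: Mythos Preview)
Your proof is correct and follows the same three-step structure as the paper. The paper handles~\eqref{eq0:lemma:convergence:k} by citing \cite[Theorem~4.1(i)]{axioms} for the exact-solver case and \cite[Theorem~7.2]{axioms} for the inexact case, whereas you spell out the underlying mechanism (transfer the D\"orfler marking to the exact estimator $\etagals{ijk}[ij]$ via~\eqref{eq:stability2}--\eqref{eq:equivalent est}, run the CKNS contraction, transfer back); this is precisely what those cited results do, so your version is simply more self-contained. One minor point: the CKNS quasi-error contraction requires only reliability~\eqref{eq:reliability:velocity}, not discrete reliability (Lemma~\ref{lemma:discrete_reliability}); the latter is stronger and suffices, but is not needed here.
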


\begin{proof}
We split the proof into three steps.

{\bf Step~1.} 
If $\U_{ijk}=\gal{ijk}[ij]$ for all $(i,j,k)\in\QQ$, step~(iv) of Algorithm~\ref{algorithm:uzawa} is the usual adaptive step in an adaptive algorithm for, e.g., the (vector-valued) Poisson model problem. 
Hence,~\eqref{eq0:lemma:convergence:k} follows from reliability~\eqref{eq:reliability:velocity}, stability~\eqref{eq:stability} and reduction~\eqref{eq:reduction}; see, e.g., ~\cite[Theorem~4.1 (i)]{axioms}. 
For general $\U_{ijk}$,~\eqref{eq0:lemma:convergence:k} follows from \cite[Theorem~7.2]{axioms} under the constraint $0\le \kpuno<\theta^{1/2}/\Cstab$.

{\bf Step~2.}
If $k < \kk$, the 
 structure of Algorithm~\ref{algorithm:uzawa} implies that the conditions in step~(ii) and~(iii) are false, i.e.,
\begin{align*}
 \eta_{ijk} + \norm{\Pi_i \div \U_{ijk}}{\Omega}
 > \kpdue \, \big( \eta_{ijk} + \norm{\div \U_{ijk}}{\Omega} \big)
 \quad\text{and}\quad
 \eta_{ijk} > \kptre \, \norm{\Pi_i \div \U_{ijk}}{\Omega}.
\end{align*}
Hence,
\begin{align*}
 \eta_{ijk} \le \eta_{ijk} + \norm{\div \U_{ijk}}\Omega
 < \frac{1}{\kpdue} \, \big( \eta_{ijk} + \norm{\Pi_i \div \U_{ijk}}\Omega \big)
 < \frac{1}{\kpdue} \, \Big( 1 + \frac{1}{\kptre} \Big) \, \eta_{ijk}
\end{align*}
which proves~\eqref{eq1:lemma:convergence:k}.

{\bf Step~3.}
For $\kk = \infty$, the estimates~\eqref{eq0:lemma:convergence:k}--\eqref{eq1:lemma:convergence:k} imply that 
\begin{align*}
 \norm{\u-\U_{ijk}}{\V}ÃÂÃÂÃÂÃ + \norm{p - P_{ij}}{\P} 
 \stackrel{\eqref{eq2:reliability:stokes}}{\lesssim}\eta_{ijk} + \norm{\div \U_{ijk}}{\Omega}
\reff{eq1:lemma:convergence:k}\simeq \eta_{ijk}
 \xrightarrow{k \to \infty} 0.
\end{align*}
Note that $\kk = \infty$ also implies that neither $i$ nor $j$ are increased, i.e., $P_{ij}$ remains constant as $k\to\infty$. Hence, $p = P_{ij} \in \P_i$ and therefore also $p = p_i$.
\end{proof}

\begin{lemma}\label{lemma:convergence:j}
Let $(i,0,0) \in \QQ$ and define $\jj := \max\set{j\in\N_0}{(i,j,0)\in\QQ} \in \N_0 \cup \{\infty\}$.
If $0<\kptre\ll1$ is sufficiently small (see~\eqref{eq:kappa prime constraints} in the proof below), then there exist constants $0 < \qj < 1$ and $\Cj>0$ such that 
\begin{align}\label{eq0:lemma:convergence:j}
 \norm{p_i-P_{i(j+n)}}{\P} 
 \le \qj^n \, \norm{p_i - P_{ij}}{\P}
 \quad \text{for all } j,n \in \N_0 \text{ with } j \le j+n \le \jj.
\end{align}
Moreover, it holds that
\begin{align}\label{eq1:lemma:convergence:j}
 \Cj^{-1} \, \norm{p_i - P_{ij}}{\P}
 \le \eta_{ij\kk} + \norm{\div \U_{ij\kk}}{\Omega}
 \le \Cj \, \norm{p_i - P_{ij}}{\P}
 \quad \text{for all } 0\le j < \jj.
\end{align}
If $\jj = \infty$, this yields convergence $\norm{\u-\U_{ij\kk}}{\V} + \norm{p-P_{ij}}{\P}\to0$ as $j \to \infty$.
While $q_2$ depends only on 
the domain $\Omega$, $\gamma$-shape regularity,  $\kpuno$, and $\kptre$, the constant $C_2$ depends additionally on $\kpdue$.
\end{lemma}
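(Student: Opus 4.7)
The plan is to interpret each $j$-update as a perturbed Richardson iteration for the discrete Schur operator $I - \Pi_i S$ acting on $\P_i$, and to show that the intrinsic contractivity of this operator dominates the perturbation caused by replacing the exact velocity $\u_{ij}$ by its approximation $\U_{ij\kk}$. The first step is to derive an error recursion. Applying the Schur identity $\div \u[Q] = SQ - BA^{-1}\f$ both at $Q = P_{ij}$ and at $Q = p_i$, together with $\Pi_i \div \u_i = 0$ from the second line of~\eqref{eq:weakform:reduced}, yields $\Pi_i \div \u_{ij} = \Pi_i S(P_{ij} - p_i)$. Substituting this in $P_{i(j+1)} = P_{ij} - \Pi_i \div \U_{ij\kk}$ and adding $\pm \Pi_i \div \u_{ij}$ gives the key identity
\begin{equation*}
  P_{i(j+1)} - p_i = (I - \Pi_i S)(P_{ij} - p_i) + \Pi_i \div(\u_{ij} - \U_{ij\kk}) \quad \text{in } \P_i.
\end{equation*}

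I would then analyze the two ingredients separately. Set $T := \Pi_i S|_{\P_i}: \P_i \to \P_i$. Both $\Pi_i$ and $S$ are self-adjoint with respect to $\dual{\cdot}{\cdot}_\Omega$, so $T$ is symmetric, and~\eqref{eq:Uzapape2.2} locates its spectrum in $[\Cdiv^{-2}, 1]$. Expanding $q \in \P_i$ in an $L^2$-orthonormal eigenbasis of $T$ and using $\norm{q}{\P}^2 = \dual{Tq}{q}_\Omega$, a direct computation gives the intrinsic contraction $\norm{(I-T)q}{\P} \le (1 - \Cdiv^{-2})\,\norm{q}{\P}$. For the perturbation, step~(iii) being activated at $(i,j,\kk)$ means $\eta_{ij\kk} \le \kptre\,\norm{\Pi_i \div \U_{ij\kk}}{\Omega}$; combining this with Lemma~\ref{lemma:equivalent est}~\eqref{eq2:reliability:velocity}, the triangle inequality, and the upper bound from Lemma~\ref{lemma:estimator:div}, I can absorb the velocity-error term on the left (provided $\Crel'(\kpuno)\,\kptre < 1$) to obtain
\begin{equation*}
  \norm{\u_{ij} - \U_{ij\kk}}{\V} \le \frac{\Crel'(\kpuno)\,\kptre}{1 - \Crel'(\kpuno)\,\kptre}\,\norm{p_i - P_{ij}}{\P}.
\end{equation*}
Bounding $\norm{\Pi_i \div(\cdot)}{\P} \le \norm{\cdot}{\V}$ (via~\eqref{eq:Uzapape2.2} and~\eqref{eq:appendix:div}) in the recursion then yields $\norm{p_i - P_{i(j+1)}}{\P} \le \qj \,\norm{p_i - P_{ij}}{\P}$ with $\qj = (1 - \Cdiv^{-2}) + \Crel'(\kpuno)\,\kptre / (1 - \Crel'(\kpuno)\,\kptre)$; the constraint $\qj < 1$ fixes the smallness threshold on $\kptre$, and iteration gives~\eqref{eq0:lemma:convergence:j}.

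For the equivalence~\eqref{eq1:lemma:convergence:j}, the lower bound is Lemma~\ref{lemma:equivalent est}~\eqref{eq2:reliability:reduced} combined with the trivial $\norm{\Pi_i \div \U_{ij\kk}}{\Omega} \le \norm{\div \U_{ij\kk}}{\Omega}$. For the upper bound I would use that step~(ii) is \emph{not} active at $(i, j, \kk)$, whence $\eta_{ij\kk} + \norm{\div \U_{ij\kk}}{\Omega} \le \kpdue^{-1}\bigl(\eta_{ij\kk} + \norm{\Pi_i \div \U_{ij\kk}}{\Omega}\bigr)$; the two right-hand terms have already been bounded by constants times $\norm{p_i - P_{ij}}{\P}$ in the previous paragraph, which explains why $\Cj$ acquires the additional dependence on $\kpdue$. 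Finally, if $\jj = \infty$, the contraction forces $\norm{p_i - P_{ij}}{\P} \to 0$, and then~\eqref{eq2:reliability:stokes} together with the upper bound of~\eqref{eq1:lemma:convergence:j} produces $\norm{\u - \U_{ij\kk}}{\V} + \norm{p - P_{ij}}{\P} \to 0$.

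The main obstacle is obtaining the sharp contraction factor $1 - \Cdiv^{-2}$ in the second step. A naive analysis of $I - \Pi_i S$ in the $\norm{\cdot}{\Omega}$ operator norm does yield this constant, but transferring it to $\norm{\cdot}{\P}$ through the norm equivalence~\eqref{eq:Uzapape2.2} would cost a parasitic factor $\Cdiv$ and destroy contractivity. The remedy is to stay intrinsically in the $\P$-norm on $\P_i$, where the simultaneous diagonalization of the symmetric operator $T$ delivers the factor $1 - \Cdiv^{-2}$ directly; everything else is routine bookkeeping with the activating conditions of steps~(ii) and~(iii) and the reliability estimates from Lemma~\ref{lemma:equivalent est}.
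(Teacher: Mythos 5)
Your proof is correct and follows the same overall structure as the paper's (error recursion via the exact Uzawa step, perturbation bound from the step~(iii) stopping criterion and absorption, then combine). The one place you take a slightly different route is the contraction of the exact iteration: the paper invokes the abstract contraction constant $q_{\rm Uzawa} = \norm{I-\Pi_i S}{}$ by pointing to Appendix~\ref{appendix:operator} applied to $\Pi_i S:\P_i\to\P_i$, while you give a self-contained spectral argument for $T := \Pi_i S|_{\P_i}$ that produces the explicit bound $\norm{(I-T)q}{\P}\le(1-\Cdiv^{-2})\norm{q}{\P}$. Both are valid; yours makes the dependence on $\Cdiv$ concrete, at the small cost of a mildly cruder constant (you use $\max_k(1-\lambda_k)^2\le(1-\Cdiv^{-2})^2$ rather than the exact spectral radius of $I-T$). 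Your intermediate step for the perturbation term first bounds $\norm{\u_{ij}-\U_{ij\kk}}{\V}$ by an absorption argument and then pushes this through $\Pi_i\div$, whereas the paper bounds $\norm{\Pi_i\div(\u_{ij}-\U_{ij\kk})}{\Omega}$ directly via~\eqref{eq:estimatestemp2}--\eqref{eq:tempjjnew}; these are algebraically equivalent and yield the same factor $\kptre\Crel'(\kpuno)/(1-\kptre\Crel'(\kpuno))$. One cosmetic issue: the statement ``both $\Pi_i$ and $S$ are self-adjoint, so $T$ is symmetric'' is not literally a valid inference (the product of self-adjoint operators need not be self-adjoint); what actually makes $T$ symmetric on $\P_i$ is that $T$ is the Galerkin restriction of $S$, i.e.\ $\dual{Tq}{r}_\Omega = \dual{Sq}{r}_\Omega = \dual{q}{Sr}_\Omega = \dual{q}{Tr}_\Omega$ for $q,r\in\P_i$ since $\Pi_i r = r$. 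The conclusion is right, but the stated reason should be tightened.
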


\begin{proof}
We split the proof into three steps.

{\bf Step~1.} If $j <
\underline{j} (i)$ and $k = \underline{k} (i,j)$, then necessarily
$\underline{k} (i,j) < \infty$. The 
 structure of 
Algorithm~\ref{algorithm:uzawa} implies that the condition in
step~(ii) is false, while the condition in step~(iii) is true, i.e.,
\begin{align}\label{dpr:proof:lemma:j}
  \eta_{ij \underline{k}} +\| {\Pi_i} \nabla \cdot
  {\U}_{ij \underline{k}} \|_{\Omega} > \, \kpdue \left( \eta_{ij
  \underline{k}} +\| \nabla \cdot {\U}_{ij \underline{k}} \|_{\Omega} 
  \right) \quad \text{{{and}}} \quad \eta_{ij \underline{k}}
  \le \kptre \, \| \Pi_{i} \nabla \cdot
  {\U}_{ij \underline{k}} \|_{\Omega}. 
\end{align}
First, this proves that 
\begin{align}\label{eq:equivalence for j}
\begin{split}
&\kpdue
\, \left( \eta_{ij \underline{k}} +\| \nabla \cdot
{\U}_{ij \underline{k}} \|_{\Omega} \right) <
\eta_{ij \underline{k}} +\| {\Pi_i} \nabla \cdot
{\U}_{ij \underline{k}} \|_{\Omega} 
\le (1 + \kptre)
\, \| {\Pi_i} \nabla \cdot {\U}_{ij
\underline{k}} \|_{\Omega} \\
&\quad\le (1 + \kptre) \, \| \nabla
\cdot {\U}_{ij \underline{k}} \|_{\Omega} \le (1 + \kptre)
\, \left( \eta_{ij \underline{k}} +\| \nabla \cdot
{\U}_{ij \underline{k}} \|_{\Omega} \right).
\end{split}
\end{align}
Second, reliability~\eqref{eq2:reliability:velocity} gives that 
\begin{align} \label{eq:estimatestemp2}
  \| {\Pi_i} \nabla \cdot \left( {\u}_{i_{} j} -{\U}_{ij
  \underline{k}}  \right) \|_{\Omega} \,\le\ \|
   {\u}_{i_{} j}-{\U}_{ij \underline{k}}  \|_{\mathbb{V}}
  \overset{(\ref{eq2:reliability:velocity})}{\le}\Crel'(\kpuno)\,\eta_{ijk}
  \overset{(\ref{dpr:proof:lemma:j})}{\le}
  \kptre\Crel'(\kpuno) \, \| {\Pi_i} \nabla
  \cdot {\U}_{ij \underline{k}} \|_{\Omega}.
\end{align}
The triangle inequality yields that 
\begin{align}   \label{eq:tempjjnew}
  (1 - \kptre\Crel'(\kpuno))\, \| {\Pi_i} \nabla \cdot
  {\U}_{ij \underline{k}} \|_{\Omega} \stackrel{\eqref{eq:estimatestemp2}}\le \|
  {\Pi_i} \nabla \cdot {\u}_{i_{} j} \|_{\Omega}
  \stackrel{\eqref{eq:estimatestemp2}}\le (1 + \kptre\Crel'(\kpuno)) \| {\Pi_i} \nabla
  \cdot {\U}_{ij \underline{k}} \|_{\Omega}.
\end{align}
This leads us to 
\begin{align}  \label{eq:estimatetempgiov}
\begin{split}
&\Cdiv^{-1} \, \frac{1 - \kptre\Crel'(\kpuno)}{1 + \kptre\Crel'(\kpuno)} \|p_i - P_{ij}
\,  \|_{\mathbb{P}}  \overset{(\ref{eq:estimator:div})}{\le} 
                                  \frac{1 - \kptre\Crel'(\kpuno)}{1 + \kptre\Crel'(\kpuno)}   \,\|
  {\Pi_i} \nabla \cdot {\u}_{ij} \|_{\Omega}
\\  &\quad \overset{(\ref{eq:tempjjnew})}{\le}     
  (1 - \kptre\Crel'(\kpuno))  \, \|
  {\Pi_i} \nabla \cdot {\U}_{ij \underline{k}}
  \|_{\Omega} 
   \overset{(\ref{eq:tempjjnew})}{\le}    \| {\Pi_i} \nabla \cdot
  {\u}_{i_{} j} \|_{\Omega}  \overset{(\ref{eq:estimator:div})}{\le}   \|p_i - P_{ij} \|_{\mathbb{P}}. 
  \end{split}
\end{align}
If $\kptre\Crel'(\kpuno)<1$, the combination of ~\eqref{eq:estimatetempgiov} and ~\eqref{eq:equivalence for j} proves~\eqref{eq1:lemma:convergence:j}.

{\bf{Step~2.}} Starting from $P_{ij}$, one step of the {\emph{exact}}
Uzawa iteration for the {\emph{reduced}} Stokes problem (leading to the
auxiliary quantity $p_{i (j + 1)}$) guarantees the 	existence of some $0 < q_{\rm{Uzawa}} < 1$ such that the following contraction holds (see~[KS08,~Eq.~(4.3)]):
\begin{align}\label{eq:discrete contraction}
 \|p_i - p_{i (j + 1)} \|_{\mathbb{P}} \, \le \,
   q_{\rm{Uzawa}} \, \|p_i - P_{ij} \|_{\mathbb{P}} 
   \quad \text{{ with}} \quad 
    p_{i (j + 1)} \:= P_{ij} -
   {\Pi_i} \nabla \cdot {\u}_{ij} .
\end{align}
The contraction constant
$q_{\rm{Uzawa}}$ is the norm of the
operator from~\eqref{eq:N_alpha} with $\alpha=1$.
Indeed, the proof of \eqref{eq:discrete contraction} works exactly as in Appendix~\ref{appendix:operator} if $S:\P\to\P$ is replaced by the operator $\Pi_iS:\P_i\to\P_i$.
In particular, $q_{\mathrm{\rm{Uzawa}}}$
does neither depend on $i$ nor on $j$. Since $P_{i (j + 1)} = P_{ij} -
{\Pi_i} \nabla \cdot {\U}_{ij \underline{k}}$, we
are thus led to
\begin{align*}
  \|p_i - P_{i (j + 1)} \|_{\mathbb{P}} &\, \le  \, \|p_i - p_{i (j + 1)}
  \|_{\mathbb{P}} +\|p_{i (j + 1)} - P_{i (j + 1)} \|_{\mathbb{P}} \\
  &\, \le \, q_{\mathrm{\rm{Uzawa}}} \, \|p_i - P_{ij}
  \|_{\mathbb{P}} \, +\| {\Pi_i} \nabla \cdot
  ({\u}_{ij} - {\U}_{ij \underline{k}}) \|_{\mathbb{P}}
  \\
  &~\reff{eq:estimatestemp2}{\le} 
  q_{\mathrm{\rm{Uzawa}}} \, \|p_i - P_{ij} \|_{\mathbb{P}} \,
  + \kptre\Crel'(\kpuno) \, \| {\Pi_i} \nabla
  \cdot {\U}_{ij \underline{k}} \|_{\Omega} \\
  &~\reff{eq:estimatetempgiov}{\le} 
  \left(q_{\mathrm{\rm{Uzawa}}} + \frac{\kptre\Crel'(\kpuno) }{ 1 - \kptre\Crel'(\kpuno) }\right) \, \|p_i -
  P_{ij} \|_{\mathbb{P}}=:q_2\,\norm{p_i-P_{ij}}{\P}.
\end{align*}
Let $0 < \kptre \ll 1$ be sufficiently small, i.e.,
\begin{align}\label{eq:kappa prime constraints}
0< \kptre\Crel'(\kpuno) <1\quad\text{and}\quad  0<q_2:=q_{\rm Uzawa} +\frac{\kptre\Crel'(\kpuno)}{1-\kptre\Crel'(\kpuno)}<1. 
   \end{align}
Then, induction proves that $\|p_i - P_{i (j + n)} \|_{\mathbb{P}} \, \le
q_2^n \, \|p_i - P_{ij} \|_{\mathbb{P}}$ for every $j, n \in
\mathbb{N}_0$ with $j \le j + n \le \underline{j}$. 
This
proves~(\ref{eq0:lemma:convergence:j}).

{\bf{Step~3.}} For $\underline{j} = \infty$, the
estimates~(\ref{eq0:lemma:convergence:j})--(\ref{eq1:lemma:convergence:j})
imply that
\begin{align*} 
\| \u - {\U}_{ij \underline{k}} \|_{\mathbb{V}} +\|p -P_{ij} \|_{\mathbb{P}} 
\reff{eq2:reliability:stokes}\lesssim
   \eta_{ij \underline{k}} +\| \nabla \cdot {\U}_{ij \underline{k}}
   \|_{\Omega}
   {\overset{(\ref{eq1:lemma:convergence:j})}{\simeq}} \|p_i -
   P_{ij} \|_{\mathbb{P} } \xrightarrow{j \to \infty} 0. \end{align*}
This concludes the proof.
\end{proof}

Note that $\ii := \max \set{i \in \N_0}{(i,0,0) \in \QQ} < \infty$ in Algorithm~\ref{algorithm:uzawa} implies that either $\jj := \jj(\ii) = \infty$ or $\kk(\ii,\jj) = \infty$. According to Lemma~\ref{lemma:convergence:k} (for $\kk=\infty$) and Lemma~\ref{lemma:convergence:j} (for $\jj=\infty$), it only remains to analyze the case $\ii = \infty$.

\begin{lemma}\label{lemma:convergence:i}
Let $\ii := \max \set{i \in \N_0}{(i,0,0) \in \QQ} \in \N_0 \cup \{\infty\}$. If $0 < \kpdue \ll 1$ is sufficiently small (see~\eqref{eq:CCC} in the proof below), then there exist constants $0 < \qi < 1$ and $\Ci > 0$ such that
\begin{align}\label{eq0:lemma:convergence:i}
 \norm{p - P_{(i+n)\jj}}{\P} \le \qi^n \, \norm{p - P_{i\jj}}{\P}
 \quad \text{for all } i, n \in \N_0 \text{ with } i\le i+n \le \ii.
\end{align}
Moreover, it holds that 
\begin{align}\label{eq1:lemma:convergence:i}
 \Ci^{-1} \, \norm{p - P_{i\jj}}{\P}
 \le \eta_{i\jj\kk} + \norm{\div \U_{i\jj\kk}}{\Omega} 
 \le \Ci \, \norm{p - P_{i\jj}}{\P}
 \quad \text{for all } 0\le i < \ii.
\end{align}
While $\Ci$ depends only on 
the domain $\Omega$, $\gamma$-shape regularity, $\kpuno$ and  $\kpdue$, the contraction constant $\qi$ depends additionally on $0<\vartheta\le1$.
If $\ii = \infty$, this yields convergence $\norm{\u - \U_{i\jj\kk}}{\V} + \norm{p - P_{i\jj}}{\P} \to 0$ as $i \to \infty$.
\end{lemma}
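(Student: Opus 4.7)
The strategy is to exploit the fact that, whenever the index $i$ is incremented at some triple $(i,\jj,\kk)$, the while–condition of step~{\rm (ii)} is true, which forces the divergence term $\norm{\div \U_{i\jj\kk}}{\Omega}$ to dominate both $\eta_{i\jj\kk}$ and $\norm{\Pi_i \div \U_{i\jj\kk}}{\Omega}$. Specifically, the inequality $\eta_{i\jj\kk}+\norm{\Pi_i\div\U_{i\jj\kk}}{\Omega}\le \kpdue(\eta_{i\jj\kk}+\norm{\div\U_{i\jj\kk}}{\Omega})$ yields $\eta_{i\jj\kk}\le \frac{\kpdue}{1-\kpdue}\norm{\div\U_{i\jj\kk}}{\Omega}$ and, via reliability~\eqref{eq2:reliability:velocity}, $\norm{\u_{i\jj}-\U_{i\jj\kk}}{\V}\le \delta\,\norm{\div\U_{i\jj\kk}}{\Omega}$ for $\delta:=\Crel'(\kpuno)\kpdue/(1-\kpdue)$. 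Combining these inputs with the continuous equivalence~\eqref{eq2:estimator:div}, namely $\norm{\div\u_{i\jj}}{\Omega}\le\norm{p-P_{i\jj}}{\P}\le\Cdiv\norm{\div\u_{i\jj}}{\Omega}$, produces the chain $\norm{p-P_{i\jj}}{\P}\simeq \norm{\div\u_{i\jj}}{\Omega}\simeq\norm{\div\U_{i\jj\kk}}{\Omega}\simeq\eta_{i\jj\kk}+\norm{\div\U_{i\jj\kk}}{\Omega}$. This already establishes the equivalence~\eqref{eq1:lemma:convergence:i}.

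The core of the argument is the one-step contraction, which uses the quasi-optimal refinement produced by Algorithm~\ref{function:refinePressure}, i.e.\ $\vartheta\,\norm{\div\U_{i\jj\kk}}{\Omega}\le\norm{\Pi_{i+1}\div\U_{i\jj\kk}}{\Omega}$. Since $p_{i+1}$ is the $\norm{\cdot}{\P}$-best approximation of $p$ in $\P_{i+1}\supseteq\P_i\ni P_{i\jj}$, Pythagoras gives
\begin{equation*}
\norm{p-P_{i\jj}}{\P}^2 = \norm{p-p_{i+1}}{\P}^2+\norm{p_{i+1}-P_{i\jj}}{\P}^2.
\end{equation*}
Applying Lemma~\ref{lemma:estimator:div} to the partition $\PP_{i+1}$ with $Q_{\PP_{i+1}}:=P_{i\jj}$ yields $\norm{p_{i+1}-P_{i\jj}}{\P}\ge \Cdiv^{-1}\norm{\Pi_{i+1}\div\u_{i\jj}}{\Omega}$. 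The latter is, by the triangle inequality and $\norm{\Pi_{i+1}}\le 1$, bounded below by $\norm{\Pi_{i+1}\div\U_{i\jj\kk}}{\Omega}-\norm{\u_{i\jj}-\U_{i\jj\kk}}{\V}\ge(\vartheta-\delta)\norm{\div\U_{i\jj\kk}}{\Omega}$, and hence, by the equivalence chain above, by $c^{1/2}\,\norm{p-P_{i\jj}}{\P}$ for an explicit $c=c(\vartheta,\kpuno,\kpdue,\Cdiv)>0$ as soon as $\delta<\vartheta$. Pythagoras then gives $\norm{p-p_{i+1}}{\P}^2\le (1-c)\,\norm{p-P_{i\jj}}{\P}^2$.

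It remains to transfer this contraction from $p_{i+1}$ to the computable approximation $P_{(i+1)\jj(i+1)}$. If $i+1<\ii$, the while–loop fires again at $(i+1,\jj,\kk)$, and an analogous computation, combined with Lemma~\ref{lemma:estimator:div} on $\PP_{i+1}$ and the bound $\norm{\Pi_{i+1}\div\U_{(i+1)\jj\kk}}{\Omega}\lesssim\kpdue\,\norm{\div\U_{(i+1)\jj\kk}}{\Omega}$, gives $\norm{p_{i+1}-P_{(i+1)\jj}}{\P}\le \varepsilon(\kpdue)\,\norm{p-P_{(i+1)\jj}}{\P}$ with $\varepsilon(\kpdue)\to 0$ as $\kpdue\to 0$. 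Using once more Pythagoras $\norm{p-P_{(i+1)\jj}}{\P}^2=\norm{p-p_{i+1}}{\P}^2+\norm{p_{i+1}-P_{(i+1)\jj}}{\P}^2$, one deduces $\norm{p-P_{(i+1)\jj}}{\P}^2\le (1-\varepsilon(\kpdue)^2)^{-1}(1-c)\norm{p-P_{i\jj}}{\P}^2$. For $\kpdue$ small enough so that $\varepsilon(\kpdue)^2<c$, the constant $\qi:=((1-c)/(1-\varepsilon(\kpdue)^2))^{1/2}$ lies in $(0,1)$, and induction on $n$ yields~\eqref{eq0:lemma:convergence:i}. Finally, if $\ii=\infty$, combining~\eqref{eq0:lemma:convergence:i}, \eqref{eq1:lemma:convergence:i}, and the reliability estimate~\eqref{eq2:reliability:stokes} gives $\norm{\u-\U_{i\jj\kk}}{\V}+\norm{p-P_{i\jj}}{\P}\to0$.

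\textbf{Main obstacle.}
The delicate point is the contraction step: the binev quasi-optimality produces the crucial bound $\vartheta\norm{\div\U_{i\jj\kk}}{\Omega}\le\norm{\Pi_{i+1}\div\U_{i\jj\kk}}{\Omega}$ only for the \emph{discrete} velocity $\U_{i\jj\kk}$, whereas Lemma~\ref{lemma:estimator:div} controls $\norm{p_{i+1}-P_{i\jj}}{\P}$ via the \emph{continuous} velocity $\u_{i\jj}=\u[P_{i\jj}]$. Bridging this gap requires the firing of step~{\rm (ii)} to render $\eta_{i\jj\kk}$, and thereby $\norm{\u_{i\jj}-\U_{i\jj\kk}}{\V}$, smaller than $\vartheta\norm{\div\U_{i\jj\kk}}{\Omega}$, which forces the quantitative constraint $\Crel'(\kpuno)\kpdue/(1-\kpdue)<\vartheta$. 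A similar smallness of $\kpdue$ is needed to transfer the contraction from $p_{i+1}$ to $P_{(i+1)\jj(i+1)}$; both constraints together give the condition~\eqref{eq:CCC} and determine~$\qi$.
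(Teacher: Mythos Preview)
Your proposal is correct and follows essentially the same route as the paper: exploit the step~(ii) condition to make $\eta_{i\jj\kk}$ and $\norm{\u_{i\jj}-\U_{i\jj\kk}}{\V}$ small relative to $\norm{\div\U_{i\jj\kk}}{\Omega}$, derive the equivalence~\eqref{eq1:lemma:convergence:i} via~\eqref{eq2:estimator:div}, then combine the binev output $\vartheta\,\norm{\div\U_{i\jj\kk}}{\Omega}\le\norm{\Pi_{i+1}\div\U_{i\jj\kk}}{\Omega}$ with Lemma~\ref{lemma:estimator:div} and Pythagoras to contract $\norm{p-p_{i+1}}{\P}$, and finally use the step~(ii) condition at level $i+1$ to pass from $p_{i+1}$ to $P_{(i+1)\jj}$. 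Two minor remarks: (a)~Lemma~\ref{lemma:estimator:div} actually gives the sharper $\norm{p_{i+1}-P_{i\jj}}{\P}\ge\norm{\Pi_{i+1}\div\u_{i\jj}}{\Omega}$ (no $\Cdiv^{-1}$ needed there; $\Cdiv$ enters later when you convert $\norm{\div\u_{i\jj}}{\Omega}$ back to $\norm{p-P_{i\jj}}{\P}$); (b)~the transfer step tacitly requires the while-condition to fire at level $i+1$, i.e.\ $i+1<\ii$, so the one-step contraction is strictly for $0\le i<i+1<\ii$ (the paper has the same restriction in its Step~2/Step~4).
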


\begin{proof}
We split the proof into five steps.

{\bf Step~1.}
According to Algorithm~\ref{algorithm:uzawa}, it holds that
\begin{align}\label{eq1:proof:lemma:i}
 \eta_{i\jj\kk} + \norm{\Pi_i \div \U_{i\jj\kk}}{\Omega}
 \le \kpdue \, \big( \eta_{i\jj\kk} + \norm{\div \U_{i\jj\kk}}{\Omega} \big).
\end{align}
For $0 < \kpdue < 1$, this implies that 
\begin{align*}
 \eta_{i\jj\kk} + \norm{\Pi_i \div \U_{i\jj\kk}}{\Omega} \le \frac{\kpdue}{1-\kpdue} \, \norm{\div \U_{i\jj\kk}}{\Omega}.
\end{align*}
Recall that 
\begin{align*}
 \norm{\div \U_{i\jj\kk}}{\Omega}
 \,\le  \norm{\div \u_{i\jj}}{\Omega} +\norm{\div (\u_{i\jj} - \U_{i\jj\kk})}{\Omega}
\reff{eq2:reliability:velocity}\le   \norm{\div \u_{i\jj}}{\Omega}+\Crel'(\kpuno)\,\eta_{i\jj\kk}
\end{align*}
We abbreviate $C(\kpuno, \kpdue):=\Crel'(\kpuno)\,\kpdue/(1-\kpdue)$.
For sufficiently small $0 < \kpdue \ll 1$ with $0<C(\kpuno, \kpdue)<1$, the combination of the last two estimates implies that
$\norm{\div \U_{i\jj\kk}}{\Omega} \le (1-C(\kpuno, \kpdue))^{-1}\,\norm{\div \u_{i\jj}}{\Omega}$.
With 
\begin{align*}
C'(\kpuno, \kpdue):=\frac{{C(\kpuno, \kpdue)}}{{1-C(\kpuno, \kpdue)}},
\end{align*} we are hence led to
\begin{align}\label{eq2:proof:lemma:i}
\begin{split}
 &\norm{\u_{i\jj} - \U_{i\jj\kk}}{\V}
 \reff{eq2:reliability:velocity}\le \Crel'(\kpuno)
\,\big( \eta_{i\jj\kk} +\norm{\Pi_i\div\U_{i\jj\kk}}{\Omega}\big)
 \le
 C(\kpuno, \kpdue) \, \norm{\div \U_{i\jj\kk}}{\Omega} 
 \\& \qquad
 \,\le\, C'(\kpuno, \kpdue) \, \norm{\div \u_{i\jj}}{\Omega}
~\reff{eq2:estimator:div}\le C'(\kpuno, \kpdue) \, \norm{p - P_{i\jj}}{\P}.
\end{split}
\end{align}
Conversely,
\begin{align*}
 \norm{p - P_{i\jj}}{\P}
 &\reff{eq2:estimator:div}\le\Cdiv \norm{\div \u_{i\jj}}{\Omega}
 \le\Cdiv \big(\norm{\div \U_{i\jj\kk}}{\Omega} + \norm{\div (\u_{i\jj} - \U_{i\jj\kk})}{\Omega}\big)\\
 &\reff{eq2:reliability:velocity}\le \max\{1,\Crel'(\kpuno)\} \, \Cdiv \, \big( \norm{\div \U_{i\jj\kk}}{\Omega} + \eta_{i\jj\kk}\big).
\end{align*}
In particular, this proves~\eqref{eq1:lemma:convergence:i}.

{\bf Step~2.}
Recall from Step~1 that
\begin{align}\label{eq3:proof:lemma:i}
\begin{split}
 \norm{\div (\u_{i\jj} - \U_{i\jj\kk})}{\Omega} 
 + \norm{\Pi_i \div \U_{i\jj\kk}}{\Omega}
 &\reff{eq2:reliability:velocity}\le \max\{1,\Crel'(\kpuno)\}\, \big(\eta_{i\jj\kk} 
 + \norm{\Pi_i \div \U_{i\jj\kk}}{\Omega}\big)
 \\&
\reff{eq2:proof:lemma:i}\le 
\max\{1,\Crel'(\kpuno)\} \, C'(\kpuno, \kpdue)  \, \norm{p - P_{i\jj}}{\P}.
\end{split}
\end{align}
We hence observe that
\begin{align*}
 \norm{p_i - P_{i\jj}}{\P} 
&~\reff{eq:estimator:div} \le \Cdiv \norm{\Pi_i \div \u_{i\jj}}{\Omega}
 \le \Cdiv\big( \norm{\Pi_i \div (\u_{i\jj} - \U_{i\jj\kk})}{\Omega}
 + \norm{\Pi_i \div \U_{i\jj\kk}}{\Omega}\big)\\
&~\reff{eq3:proof:lemma:i}\le \Cdiv\max\{1,\Crel'(\kpuno)\} \, C'(\kpuno, \kpdue) \, \norm{p-P_{i\jj}}{\P}.
\end{align*}

{\bf Step~3.}
From Algorithm~\ref{function:refinePressure}, we obtain that
\begin{align*}
 \vartheta \, \norm{\div \U_{i\jj\kk}}{\Omega}
 \le \norm{\Pi_{i+1} \div \U_{i\jj\kk}}{\Omega}.
\end{align*}
According to~\eqref{eq2:proof:lemma:i}, it holds that
\begin{align*}
 \norm{\div \u_{i\jj}}{\Omega}
 \le \norm{\div \U_{i\jj\kk}}{\Omega} + \norm{\div (\u_{i\jj}-\U_{i\jj\kk})}{\Omega}
 \reff{eq2:proof:lemma:i}\le (1 + C(\kpuno, \kpdue)) \, \norm{\div \U_{i\jj\kk}}{\Omega}, 
\end{align*}
as well as
\begin{align*}
 \norm{\Pi_{i+1} \div (\u_{i\jj} - \U_{i\jj\kk})}{\Omega} 
 &\le \norm{\u_{i\jj} - \U_{i\jj\kk}}{\V}
 \reff{eq2:proof:lemma:i}\le C'(\kpuno, \kpdue) \, \norm{\div \u_{i\jj}}{\Omega}.
\end{align*}
Combining the last three estimates, we see that
\begin{align*}
 \norm{\Pi_{i+1} \div \u_{i\jj}}{\Omega}
 &\ge \norm{\Pi_{i+1} \div \U_{i\jj\kk}}{\Omega}
 - \norm{\Pi_{i+1} \div ( \u_{i\jj} - \U_{i\jj\kk} )}{\Omega}
 \\&
 \ge \Big( \frac{\vartheta}{1+C(\kpuno, \kpdue)} - C'(\kpuno, \kpdue) \Big) \, \norm{\div \u_{i\jj}}{\Omega}.
\end{align*}%
Recall the constant $\Cdiv\ge1$ from~\eqref{eq:Uzapape2.2}. 
If $0 < \kpdue \ll 1$ is sufficiently small, it holds that
$C''(\kappa_1,\kappa_2,\vartheta) := \big(\frac{\vartheta}{1+C(\kpuno, \kpdue)} - C'(\kpuno, \kpdue)\big) / \Cdiv > 0$.
This implies that
\begin{align*}
 \norm{p_{i+1} - P_{i\jj}}{\P}
~\reff{eq:estimator:div}\ge  \norm{\Pi_{i+1} \div \u_{i\jj}}{\Omega}
 &\,\ge\, \Big( \frac{\vartheta}{1+C(\kpuno, \kpdue)} - C'(\kpuno, \kpdue) \Big) \, \norm{\div \u_{i\jj}}{\Omega}
 \\
 &\reff{eq2:estimator:div}\ge C''(\kappa_1,\kappa_2,\vartheta) \, \norm{p-P_{i\jj}}{\P}.
\end{align*}
Together with the Pythagoras theorem, we are hence led to
\begin{align*}
 \norm{p-p_{i+1}}{\P}^2 
 = \norm{p-P_{i\jj}}{\P}^2 - \norm{p_{i+1} - P_{i\jj}}{\P}^2
 \le (1-C''(\kappa_1,\kappa_2,\vartheta)^2) \, \norm{p-P_{i\jj}}{\P}^2.
\end{align*}

{\bf Step~4.}
Combining Step~2 and Step~3, we obtain that
\begin{align*}
& \norm{p - P_{(i+1)\jj}}{\P}^2
 = \norm{p - p_{i+1}}{\P}^2 + \norm{p_{i+1} - P_{(i+1)\jj}}{\P}^2
 \\&\quad
 \le \left(1-C''(\kappa_1,\kappa_2,\vartheta)^2\right) \, \norm{p-P_{i\jj}}{\P}^2 + \Cdiv^2 \, \max\{1,\Crel'(\kpuno)^2\} \, C'(\kpuno, \kpdue)^2 \, \norm{p - P_{(i+1)\jj}}{\P}^2.
\end{align*}
For sufficiently small $0 < \kpdue \ll 1$, i.e.,
\begin{align}\label{eq:CCC}
\begin{split}
 &C(\kpuno, \kpdue) = \frac{\Crel'(\kpuno)\kpdue}{1-\kpdue} < 1, 
\\
&0 < C''(\kappa_1,\kappa_2,\vartheta) = \Big( \frac{\vartheta}{1+C(\kpuno, \kpdue)} - \frac{C(\kpuno, \kpdue)}{1-C(\kpuno, \kpdue)} \Big) \, \Cdiv^{-1},
 \\
&0 < q_3^2 := \frac{1-C''(\kappa_1,\kappa_2,\vartheta)^2}{1-\Cdiv^2 \, \max\{1,\Crel'(\kpuno)^2\} \, C'(\kpuno, \kpdue)^2}<1,\hspace*{-3mm}
\end{split}
\end{align}%
we hence see that
\begin{align*}
 \norm{p - P_{(i+1)\jj}}{\P}^2 
 \le q_3^2 \, \norm{p-P_{i\jj}}{\P}^2.
\end{align*}
By induction, we conclude~\eqref{eq0:lemma:convergence:i}.

{\bf Step~5.}
For $\ii=\infty$, the estimates~\eqref{eq0:lemma:convergence:i}--\eqref{eq1:lemma:convergence:i} imply that 
\begin{align*}
 \norm{\u-\U_{i\jj\kk}}{\V} + \norm{p - P_{i\jj}}{\P} 
 \stackrel{\eqref{eq2:reliability:stokes}}{\lesssim} \eta_{i\jj\kk} + \norm{\div \U_{i\jj\kk}}{\Omega}
 \stackrel{\eqref{eq1:lemma:convergence:i}}\lesssim \norm{p-P_{i\jj}}{\P}
 \xrightarrow{i \to \infty} 0.
\end{align*}
This concludes the proof.
\end{proof}

\subsection{Proof of Theorem~\ref{theorem:linearconvergence}}\label{sec:proof of linconv}

To prove Theorem~\ref{theorem:linearconvergence}, we need the following two lemmas.
A slightly weaker version of the first lemma is already proved in~\cite[Lemma~4.9]{axioms}.
The elementary proof, however, immediately extends to the  following generalization and is therefore omitted.
The second lemma states certain quasi-monotonicities for the output of the adaptive algorithm.

\begin{lemma}\label{lem:uniform summability}
Let $(a_\ell)_{\ell\in\N_0}$ be a sequence with $a_\ell\ge 0$ for all $\ell\in\N_0$. 
With the convention $0^{-1/s}:=\infty$,  the following three statements are pairwise equivalent:
\begin{itemize}
\item[\rm (a)] There exist a constant $C>0$ such that 
$\sum_{n=\ell}^\infty a_n \le C a_\ell$  {for all }$\ell\in\N_0$.
\item[\rm(b)] For all $s>0$, there exists $C>0$ such that
$\sum_{n=0}^{\ell} a_n^{-1/s}\le C a_\ell^{-1/s}$  {for all }$\ell\in\N_0$.
\item[\rm(c)]\label{item:linconvtmp} There exist  $0<q<1$ and $C>0$ such that 
$a_{\ell+n}\le C q^n a_\ell$ { for all } $n,\ell\in\N_0$.
\end{itemize}
Here, in each statement, the constants $C > 0$ may differ. \hfill$\blacksquare$ 
\end{lemma}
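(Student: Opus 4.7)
The plan is to establish the equivalences by showing the cyclic implications (a)$\,\Rightarrow\,$(c)$\,\Rightarrow\,$(a) and (c)$\,\Rightarrow\,$(b)$\,\Rightarrow\,$(c), each via elementary manipulations of tail resp.\ partial sums. The case where some $a_\ell = 0$ is handled at the outset by observing that all three statements then force $a_n = 0$ for $n\ge\ell$, so we may assume throughout that $a_\ell > 0$ for all $\ell$.

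For (a)$\,\Leftrightarrow\,$(c), introduce the tail $T_\ell := \sum_{n=\ell}^\infty a_n$, which is finite by (a) and satisfies $a_\ell = T_\ell - T_{\ell+1}$. Then (a) reads $T_\ell \le C(T_\ell - T_{\ell+1})$, which rearranges to $T_{\ell+1} \le (1-1/C)\,T_\ell$. Iterating yields $T_{\ell+n} \le q^n T_\ell$ with $q := 1 - 1/C \in (0,1)$, and combining with $a_{\ell+n} \le T_{\ell+n}$ and $T_\ell \le Ca_\ell$ delivers (c) with constant $C$ and rate $q$. Conversely, (c)$\,\Rightarrow\,$(a) follows from the geometric series $\sum_{n=\ell}^\infty a_n = \sum_{m=0}^\infty a_{\ell+m} \le Ca_\ell \sum_{m=0}^\infty q^m = \frac{C}{1-q}\,a_\ell$.

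For the implication (c)$\,\Rightarrow\,$(b), fix $s>0$. Reading (c) backwards as $a_m \le Cq^{\ell-m}a_\ell$ for $0 \le m \le \ell$ and raising to the power $-1/s$ gives $a_m^{-1/s} \le C^{1/s}q^{-(\ell-m)/s}a_\ell^{-1/s}$; summing over $m = 0,\ldots,\ell$ and using $\sum_{j\ge 0}q^{j/s} = (1-q^{1/s})^{-1}$ yields (b) with the $s$-dependent constant $\tilde C := C^{1/s}/(1-q^{1/s})$. The converse (b)$\,\Rightarrow\,$(c) is the least routine step and is where the main difficulty lies. Fix $s > 0$ such that (b) holds, set $b_n := a_n^{-1/s}$ and $S_\ell := \sum_{n=0}^\ell b_n$. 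Then (b) reads $S_\ell \le C\,b_\ell = C(S_\ell - S_{\ell-1})$, which rearranges to $S_{\ell-1} \le (1 - 1/C)\,S_\ell$, i.e.\ $S_\ell \ge Q\,S_{\ell-1}$ with $Q := C/(C-1) > 1$. Iterating gives $S_{\ell+n-1} \ge Q^{n-1}S_\ell$, and consequently $b_{\ell+n} = S_{\ell+n} - S_{\ell+n-1} \ge (Q-1)\,S_{\ell+n-1} \ge (Q-1)Q^{n-1}b_\ell$, i.e.\ geometric growth of $b_{\ell+n}$ in $n$. Taking $(-s)$-th powers returns (c) with rate $q := Q^{-s} \in (0,1)$ and constant $\big((C-1)Q^{s}\big)^{s}$, say.

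The main obstacle is precisely this last implication (b)$\,\Rightarrow\,$(c): we must convert summability of $a_n^{-1/s}$ into \emph{pointwise} geometric decay of $a_n$. The trick is to read (b) not as an upper bound on a sum, but as a lower bound on partial-sum increments, which forces $S_\ell$—and hence $b_\ell = a_\ell^{-1/s}$—to grow geometrically. Note that in (b)$\,\Rightarrow\,$(c) we need (b) only for a single value of $s$, consistent with the statement that (b) holds for \emph{all} $s>0$.
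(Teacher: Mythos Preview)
Your argument is correct in structure and matches the standard elementary proof (the paper itself omits the proof and refers to \cite[Lemma~4.9]{axioms}, where the same tail-sum/partial-sum manipulations appear). However, there is a sign slip in your (c)$\Rightarrow$(b) step that you should fix: reading (c) with earlier index $m\le\ell$ gives $a_\ell \le C\,q^{\ell-m}\,a_m$, \emph{not} $a_m \le C\,q^{\ell-m}\,a_\ell$. Consequently, after raising to the power $-1/s$ (which reverses the inequality), the correct bound is
\[
 a_m^{-1/s} \le C^{1/s}\,q^{(\ell-m)/s}\,a_\ell^{-1/s},
\]
with exponent $+(\ell-m)/s$ rather than $-(\ell-m)/s$. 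Your subsequent use of the convergent geometric series $\sum_{j\ge0} q^{j/s}$ and the resulting constant $C^{1/s}/(1-q^{1/s})$ are consistent with the corrected exponent, so the conclusion stands; only the two intermediate displayed inequalities need to be fixed. Also, in (a)$\Rightarrow$(c) you implicitly use $C>1$ to get $q=1-1/C\in(0,1)$; it is worth remarking that (a) with $a_\ell>0$ forces $C>1$ since $T_\ell\ge a_\ell + a_{\ell+1} > a_\ell$.
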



\begin{lemma}\label{lemma:monotone}
Let $0<\kpuno<\theta^{1/2}/\Cstab$.
Suppose that $\kpdue,\kptre$ are sufficiently small as in Lemma~\ref{lemma:convergence:j} and  Lemma~\ref{lemma:convergence:i}. 
Let $(i,j,0) \in \QQ$. Then,  there hold the  assertions~{\rm(a)--(d)}:
\begin{itemize}
\item[\rm(a)] If $i\ge1$, then $\eta_{i00} + \norm{\div \U_{i00}}{\Omega} \le\Cmon\big( \eta_{(i-1)\jj\kk} + \norm{\div \U_{{(i-1)}\jj\kk}}{\Omega}\big)$.
\item[\rm(b)] If $j\ge1$, then $\eta_{ij0} + \norm{\div \U_{ij0}}{\Omega} \le \Cmon \, \big( \eta_{i(j-1)\kk} + \norm{\div \U_{i(j-1)\kk}}{\Omega} \big)$.
\item[\rm(c)] $\eta_{ijk} + \norm{\div \U_{ijk}}{\Omega} \le \Cmon \, \big( \eta_{ijk'} + \norm{\div \U_{ijk'}}{\Omega} \big)$ for all $0 \le k' \le k \le \kk(i,j)$.
\item[\rm(d)] $\eta_{ij\kk} + \norm{\div \U_{ij\kk}}{\Omega} \le \Cmon \, \big( \eta_{ij'\kk} + \norm{\div \U_{ij'\kk}}{\Omega} \big)$ for all $0 \le j' \le j < \jj(i)$.
\end{itemize}
The constant $\Cmon>0$ depends only on 
 $\Omega$, $\Cstab$, $\Crel$, $\Ck$, and $\Cj$.
\end{lemma}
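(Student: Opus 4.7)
The plan is to treat the four assertions one by one, since each corresponds to a distinct type of transition in Algorithm~\ref{algorithm:uzawa} and therefore requires a different ingredient.

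For~(a), I observe that when the loop passes from $(i-1,\jj,\kk)$ to $(i,0,0)$ via step~(ii), the triangulation $\TT_{i00} = \TT_{(i-1)\jj\kk}$ and the pressure $P_{i0} = P_{(i-1)\jj}$ coincide, so the two exact Galerkin solutions $\gal{i00}[i0]$ and $\gal{(i-1)\jj\kk}[(i-1)\jj]$ are identical. Lemma~\ref{lemma:equivalent est} then gives that $\eta_{i00}$ and $\eta_{(i-1)\jj\kk}$ are equivalent up to the factor $(1+\kpuno\Cstab)/(1-\kpuno\Cstab)$, which is admissible since $\kpuno\Cstab<\theta^{1/2}\le 1$. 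For the divergence contributions, I apply the triangle inequality
\begin{align*}
 \norm{\div \U_{i00}}{\Omega}
 \le \norm{\div \U_{(i-1)\jj\kk}}{\Omega}
 + \norm{\U_{i00}-\gal{i00}[i0]}{\V}
 + \norm{\gal{(i-1)\jj\kk}[(i-1)\jj]-\U_{(i-1)\jj\kk}}{\V}
\end{align*}
together with~\eqref{eq:appendix:div} and step~(i) of Algorithm~\ref{algorithm:uzawa}, which bound the last two terms by $\kpuno$ times the respective estimators.

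For~(b), the triangulation is again unchanged, $\TT_{ij0}=\TT_{i(j-1)\kk}$, but the pressure is updated to $P_{ij} = P_{i(j-1)} - \Pi_i\div \U_{i(j-1)\kk}$, so $\norm{P_{ij}-P_{i(j-1)}}{\P}\le \norm{\div \U_{i(j-1)\kk}}{\Omega}$ by~\eqref{eq:Uzapape2.2}. Estimate~\eqref{eq:u to p} controls the change of the exact Galerkin velocity by the same quantity, so stability (Lemma~\ref{lemma:stability}) together with the estimator equivalence~\eqref{eq:equivalent est} yields $\eta_{ij0}\lesssim \eta_{i(j-1)\kk}+\norm{\div\U_{i(j-1)\kk}}{\Omega}$; the divergence term is bounded analogously by adding and subtracting both exact Galerkin solutions and again invoking step~(i) of Algorithm~\ref{algorithm:uzawa}.

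For~(c), the pressure $P_{ij}$ stays fixed, and the two triangulations satisfy $\TT_{ijk}\in\Tc(\TT_{ijk'})$, so Galerkin orthogonality in $\V$ delivers
\begin{align*}
 \norm{\gal{ijk}[ij]-\gal{ijk'}[ij]}{\V}
 \le \norm{\u_{ij}-\gal{ijk'}[ij]}{\V}
 \le \Crel\,\etagal{ijk'}[ij]
 \lesssim \eta_{ijk'}.
\end{align*}
Combining this with step~(i) of Algorithm~\ref{algorithm:uzawa} shows $\norm{\U_{ijk}-\U_{ijk'}}{\V}\lesssim \eta_{ijk'}+\eta_{ijk}$, and Lemma~\ref{lemma:convergence:k} then gives $\eta_{ijk}\le\Ck\eta_{ijk'}$ and thereby controls the divergence increment via~\eqref{eq:appendix:div}. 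For~(d), I simply chain the equivalence $\eta_{ij\kk}+\norm{\div\U_{ij\kk}}{\Omega}\simeq \norm{p_i-P_{ij}}{\P}$ from~\eqref{eq1:lemma:convergence:j} with the Uzawa contraction~\eqref{eq0:lemma:convergence:j} of Lemma~\ref{lemma:convergence:j}, which is applicable because $j'\le j<\jj$. The main technical obstacle will be~(b), where I must carefully track the dependence of the constants on $\kpuno$, $\Cstab$, and $\Crel$ so that the monotonicity constant $\Cmon$ indeed depends only on the quantities listed in the statement; all other cases are essentially bookkeeping once the appropriate continuous-dependence tool (\eqref{eq:equivalent est}, \eqref{eq:u to p}, or Galerkin orthogonality) has been identified.
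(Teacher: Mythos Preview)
Your proposal is correct and follows essentially the same approach as the paper's proof: parts~(a), (c), and~(d) are identical in spirit (identity of meshes and pressures for~(a), Galerkin orthogonality plus reliability and Lemma~\ref{lemma:convergence:k} for~(c), and the equivalences of Lemma~\ref{lemma:convergence:j} for~(d)). The only cosmetic difference is in~(b), where you invoke~\eqref{eq:u to p} together with $\norm{P_{ij}-P_{i(j-1)}}{\P}\le\norm{\div\U_{i(j-1)\kk}}{\Omega}$ to bound $\norm{\gal{ij0}[ij]-\gal{i(j-1)\kk}[i(j-1)]}{\V}$, whereas the paper derives the same bound directly from the discrete variational identity $a(\gal{ij0}[ij]-\gal{i(j-1)\kk}[i(j-1)],\VV)=b(\VV,\Pi_i\div\U_{i(j-1)\kk})$; both arguments yield the same estimate, and your route via~\eqref{eq:u to p} plus the estimator equivalence~\eqref{eq:equivalent est} even avoids the explicit absorption of the $\kpuno\Cstab\,\eta_{ij0}$ term that the paper performs.
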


\begin{proof}
To shorten notation, we set $\etagals{ijk}[ij]:=\etagal{ijk}[ij]$ and $\gals{ijk}[ij]:=\gal{ijk}[ij].$
To prove~(a), recall from step~(ii) of Algorithm~\ref{algorithm:uzawa} that $\TT_{i00} = \TT_{(i-1)\jj\kk}$ as well as $P_{i0} = P_{(i-1)\jj}$. Hence, $\gals{i00}[i0] = \gals{(i-1)\jj\kk}[(i-1)\jj]$ and consequently $\etagals{i00}[i0] = \etagals{(i-1)\jj\kk}[(i-1)\jj]$ as well as $\norm{\div \gals{i00}[i0]}{\Omega} = \norm{\div  \gals{(i-1)\jj\kk}[(i-1)\jj]}{\Omega}$.
Since $\kpuno<\theta^{1/2}\Cstab^{-1}\le \Cstab^{-1}$, we can apply the equivalence \eqref{eq:equivalent est} in both directions.
With step~(i) of Algorithm~\ref{algorithm:uzawa}, we see that 
\begin{align*}
&\eta_{i00}+\norm{\div\U_{i00}}{\Omega} 
\reff{eq:equivalent est}\lesssim \etagals{i00}[i0] +\norm{\div\gals{i00}[i0]}{\Omega}+\norm{\gals{i00}[i0]-\U_{i00}}{\V}
\lesssim \etagals{i00}[i0] +\norm{\div\gals{i00}[i0]}{\Omega} +\eta_{i00}
\\
&\quad\reff{eq:equivalent est}\lesssim \etagals{i00}[i0] +\norm{\div\gals{i00}[i0]}{\Omega} 
=\etagals{(i-1)\jj\kk}[(i-1)\jj]+\norm{\div\gals{(i-1)\jj\kk}[(i-1)\jj]}{\Omega}
\reff{eq:equivalent est}\lesssim  \eta_{(i-1)\jj\kk} + \norm{\div \U_{(i-1)\jj\kk}}{\Omega}
\\&\qquad+\norm{\gals{(i-1)\jj\kk}[(i-1)\jj]- \U_{(i-1)\jj\kk}}{\V}
\lesssim \eta_{(i-1)\jj\kk} + \norm{\div \U_{(i-1)\jj\kk}}{\Omega}.
\end{align*}

To prove~(b), recall from step~(iii) of Algorithm~\ref{algorithm:uzawa} that $\TT_{ij0} = \TT_{i(j-1)\kk}$ and $P_{ij} = P_{i(j-1)} - \Pi_{i} \div \U_{i(j-1)\kk}$. According to the discrete variational form~\eqref{eq:weakform:discrete}, it holds that
\begin{align*}
 a(\gals{ij0}[ij] - \gals{i(j-1)\kk}[i(j-1)] , \VV_{ij0}) = b(\VV_{ij0} , \Pi_{i} \div \U_{i(j-1)\kk})
 \quad \text{for all } \VV_{ij0} \in \V(\TT_{ij0}) = \V(\TT_{(i-1)\jj\kk}).
\end{align*}
This proves that $\norm{\gals{ij0}[ij] - \gals{i(j-1)\kk}[i(j-1)]}{\V} \lesssim \norm{\Pi_{i} \div \U_{i(j-1)\kk}}{\Omega} \le \norm{\div \U_{i(j-1)\kk}}{\Omega}$.
First, it follows that
\begin{align*}
 &\norm{\div \U_{ij0}}{\Omega} 
 \le \norm{\div \U_{i(j-1)\kk}}{\Omega} + \norm{\U_{ij0} - \U_{i(j-1)\kk}}{\V}
\le \norm{\div \U_{i(j-1)\kk}}{\Omega}+ \norm{ \gals{ij0}[ij] - \gals{i(j-1)\kk}[i(j-1)]}{\V} 
\\
&\quad+\norm{\gals{ij0}[ij]-\U_{ij0}}{\V}
+\norm{\gals{i(j-1)\kk}[i(j-1)]-\U_{i(j-1)\kk}}{\V}
 \le \norm{\div \U_{i(j-1)\kk}}{\Omega}+\kpuno\,\eta_{ij0}+\kpuno\,\eta_{i(j-1)\kk}.
\end{align*}
Second, stability of the error estimator (Lemma~\ref{lemma:stability}),  $\TT_{ij0} = \TT_{i(j-1)\kk}$ and the previous estimate prove that
\begin{align*}
 \eta_{ij0}&\reff{eq:stability}\le \eta_{i(j-1)\kk} + \Cstab \, \big(\norm{\U_{ij0} - \U_{i(j-1)\kk}}{\V}
 +\norm{\Pi_i\div\U_{i(j-1)\kk}}{\Omega}\big)
 \\
& \,\le(1+\kpuno\Cstab)\,\eta_{i(j-1)\kk} + \Cstab\,\norm{\div \U_{i(j-1)\kk}}{\Omega}+\kpuno\Cstab\,\eta_{ij0}.
\end{align*}
Recall that $\kpuno\Cstab<\theta^{1/2}\le 1$.
Thus, combining the last two estimates, we conclude the proof of~(b).

To prove~(c), note that Lemma~\ref{lemma:convergence:k} implies that
\begin{align}\label{eq:Cmon c1}
 \eta_{ijk}~\reff{eq0:lemma:convergence:k}\le \Ck \, \eta_{ijk'}
 \quad \text{for all } 0 \le k' < k \le \kk := \kk(i,j).
\end{align}
Moreover, the Pythagoras theorem, reliability~\eqref{eq:reliability:velocity}, and the equivalence~\eqref{eq:equivalent est} prove that
\begin{align*}
 \norm{\div \U_{ijk}}{\Omega} 
 &\,\le \norm{\div \U_{ijk'}}{\Omega} + \norm{ \gals{ijk}[ij] - \gals{ijk'}[ij] }{\V}
 +\norm{\gals{ijk}[ij]-\U_{ijk}}{\V}+\norm{\gals{ijk'}[ij]-\U_{ijk'}}{\V}
 \\
 &\,\le \norm{\div \U_{ijk'}}{\Omega} + \norm{\u_{ij} - \gals{ijk'}[ij]}{\V}
 +\kpuno\,\eta_{ijk}+\kpuno\,\eta_{ijk'}\\
 &\hspace{-11pt}\stackrel{\eqref{eq:reliability:velocity}+\eqref{eq:Cmon c1}}\lesssim \norm{\div \U_{ijk'}}{\Omega} + \etagals{ijk'}[ij] +\eta_{ijk'}
\\
 &\reff{eq:equivalent est}\lesssim \norm{\div \U_{ijk'}}{\Omega} + \eta_{ijk'}.
\end{align*}

To prove~(d), note that Lemma~\ref{lemma:convergence:j} implies that
\begin{align*}
 \eta_{ij\kk} + \norm{\div \U_{ij\kk}}{\Omega} 
~\reff{eq1:lemma:convergence:j}\simeq \norm{p_i-P_{ij}}{\P}
~\reff{eq0:lemma:convergence:j}\le \norm{p_i-P_{ij'}}{\P}
~\reff{eq1:lemma:convergence:j}\simeq \eta_{ij'\kk} + \norm{\div \U_{ij'\kk}}{\Omega}.
\end{align*}
This concludes the proof.
\end{proof}

\begin{proof}[Proof of Theorem~\ref{theorem:linearconvergence}]
For all $0 \le i' \le i\le \ii$, define $\jjj(i) \in \N_0$ by
\begin{align*}
 \jjj(i) := \begin{cases}
0 \quad & \text{if $i' < i$},\\
 j' \quad&\text{if $i' = i$}.
 \end{cases}
\end{align*}
For all $0\le i'\le i\le\ii$ and all $\jjj(i) \le j \le \jj(i)$, define $\kkk(i,j) \in \N_0$ by 
\begin{align*}
 \kkk(i,j) := \begin{cases}
0\quad & \text{if $i' < i$ or $j' < j$},\\
 k' \quad & \text{if $i' = i $ and $j' = j$}.
 \end{cases}
\end{align*}
As for $\jj$ and $\kk$, we write $\jjj=\jjj(i)$ and $\kkk=\kkk(i,j)$ if $i$ and $j$ are clear from the context.
Further, we abbreviate
\begin{align*}
\mu_{ijk}:=\eta_{ijk} + \norm{\div \U_{ijk}}{\Omega}.
\end{align*}
With this notation and according to Lemma~\ref{lem:uniform summability},~\eqref{eq:linconv} is equivalent to 
\begin{align}\label{eq:linconv proof}
\sum_{(i,j,k)\in\QQ\atop(i',j',k') \le (i,j,k)}  \mu_{ijk}
=\sum_{i=i'}^{\ii}\sum_{j=\jjj(i)}^{\jj(i)}\sum_{k=\kkk(i,j)}^{\kk(i,j)} \mu_{ijk}
\lesssim  \mu_{i'j'k'}
\quad \text{for all $(i',j',k')\in\QQ$.}
\end{align} 
We prove~\eqref{eq:linconv proof} in the following three steps.

{\bf Step~1.}
For $\kkk(i,j)<\kk(i,j)<\infty$, Lemma~\ref{lemma:monotone} (c)  proves that $\mu_{ij\kk}\lesssim \mu_{ij\kkk}$
Hence, Lemma~\ref{lemma:convergence:k} in combination with the geometric series 
allows to estimate the  sum over $k$
\begin{align}\begin{split}
&\sum_{i=i'}^{\ii}\sum_{j=\jjj(i)}^{\jj(i)}\sum_{k=\kkk(i,j)}^{\kk(i,j)} \mu_{ijk}\stackrel{\rm(c)}\lesssim\sum_{i=i'}^{\ii}\sum_{j=\jjj(i)}^{\jj(i)}\sum_{k=\kkk(i,j)}^{\kk(i,j)-1} \mu_{ijk}\reff{eq1:lemma:convergence:k}\simeq\sum_{i=i'}^{\ii}\sum_{j=\jjj(i)}^{\jj(i)}\sum_{k=\kkk(i,j)}^{\kk(i,j)-1} \eta_{ijk}\reff{eq0:lemma:convergence:k}\lesssim \sum_{i=i'}^{\ii}\sum_{j=\jjj(i)}^{\jj(i)} \eta_{ij\kkk}\\
&\qquad\le \sum_{i=i'}^{\ii}\sum_{j=\jjj(i)}^{\jj(i)} \mu_{ij\kkk}
= \sum_{j=\jjj(i')}^{\jj(i')} \mu_{i'j\kkk}+ \sum_{i=i'+1}^{\ii}\sum_{j=\jjj(i)}^{\jj(i)} \mu_{ij\kkk}
= \sum_{j=j'}^{\jj(i')} \mu_{i'j\kkk}+\sum_{i=i'+1}^{\ii}\sum_{j=0}^{\jj(i)} \mu_{ij0}.\label{eq:linconv terms}
\end{split}
\end{align}

 {\bf Step~2.}
In this step, we bound the first summand of~\eqref{eq:linconv terms} by $\mu_{i'j'k'}$.
It holds that 
 \begin{align*}
 \sum_{j=j'}^{\jj(i')} \mu_{i'j\kkk}= \mu_{i'j'\kkk}+\sum_{j=j'+1}^{\jj(i')} \mu_{i'j\kkk}=\mu_{i'j'k'}+\sum_{j=j'+1}^{\jj(i')} \mu_{i'j0}.
 \end{align*}
Lemma~\ref{lemma:monotone} (b) and Lemma~\ref{lemma:convergence:j} in combination with the geometric series show that
 \begin{align*}
 \sum_{j=j'+1}^{\jj(i')} \mu_{i'j0}\stackrel{\rm (b)}
 \lesssim \sum_{j=j'+1}^{\jj(i')} \mu_{i'(j-1)\kk}
 = \sum_{j=j'}^{\jj(i')-1} \mu_{i'j\kk}
~\reff{eq1:lemma:convergence:j}\simeq
 \sum_{j=j'}^{\jj(i')-1}\norm{p_{i'}-P_{i'j}}{\P}\reff{eq0:lemma:convergence:j}
 \lesssim\norm{p_{i'}-P_{i'j'}}{\P}
\reff{eq:reliability:reduced}\lesssim  \mu_{i'j'k'}.
 \end{align*}
 
{\bf Step~3.}
In this step, we bound the second summand of~\eqref{eq:linconv terms} by $\mu_{i'j'k'}$.
First, we consider only the terms where $j>0$.
As in Step~2, Lemma~\ref{lemma:monotone} (b) and Lemma~\ref{lemma:convergence:j} in combination with the geometric series show that
 \begin{align*}
 \sum_{i=i'+1}^{\ii}\sum_{j=1}^{\jj(i)} \mu_{ij0}
 \stackrel{\rm (b)}\lesssim 
 \sum_{i=i'+1}^{\ii}\sum_{j=1}^{\jj(i)} \mu_{i(j-1)\kk}
 = \sum_{i=i'+1}^{\ii}\sum_{j=0}^{\jj(i)-1} \mu_{ij\kk}
 \!\!\!\stackrel{\text{Lem.}\ref{lemma:convergence:j}}\lesssim\!\!\!
 \sum_{i=i'+1}^{\ii}\mu_{i0\kk}     \stackrel{\rm (c)}\lesssim \sum_{i=i'+1}^{\ii}\mu_{i00}.
 \end{align*}
Hence, it holds that 
\begin{align*}
 \sum_{i=i'+1}^{\ii}\sum_{j=0}^{\jj(i)} \mu_{ij0} = \sum_{i=i'+1}^{\ii}\mu_{i00}+ \sum_{i=i'+1}^{\ii}\sum_{j=1}^{\jj(i)} \mu_{ij0}\lesssim\sum_{i=i'+1}^{\ii}\mu_{i00}.
 \end{align*}
  Lemma~\ref{lemma:monotone} (a) and Lemma~\ref{lemma:convergence:i} in combination with 
the geometric series show that
 \begin{align*}
 \sum_{i=i'+1}^{\ii}\mu_{i00}\stackrel{\rm (a)}\lesssim \sum_{i=i'+1}^{\ii}\mu_{(i-1)\jj\kk} 
 = \sum_{i=i'}^{\ii-1}\mu_{i\jj\kk}
~\reff{eq1:lemma:convergence:i}\simeq
 \sum_{i=i'}^{\ii-1} \norm{p-P_{i\jj}}{\P}\reff{eq0:lemma:convergence:i}
 \lesssim \norm{p-P_{i'\jj}}{\P}
~\reff{eq:reliability:stokes}\lesssim 
 \mu_{i'\jj\kk}.
 \end{align*}
 If $j' = \jj(i')$, then Lemma~\ref{lemma:monotone} (c) yields that 
$\mu_{i'\jj\kk}= \mu_{i'j'\kk}
\lesssim \mu_{i'j'k'}$.
Otherwise, if $j' < \jj(i')$, then Lemma~\ref{lemma:monotone} (b)--(d) yield that 
 \begin{align*}
 \mu_{i'\jj\kk}
 \stackrel{\rm(c)}\lesssim \mu_{i'\jj 0}
 \stackrel{\rm(b)}\lesssim \mu_{i'(\jj-1)\kk}
 \stackrel{\rm(d)}\lesssim \mu_{i'j'\kk}
 \stackrel{\rm(c)}\lesssim \mu_{i'j'k'}.
 \end{align*}
 Altogether,  we have derived~\eqref{eq:linconv proof}, which concludes the proof.
\end{proof}%

\section{Convergence rates}\label{sec:rates}

\subsection{Main theorem on optimal convergence rates}
\label{subsec:main theorem on rates}

The first lemma relates two different characterizations of approximation classes from the literature, which are either based on the accuracy $\eps > 0$ (see, e.g.,~\cite{stevenson,ks}) or the number of elements $N$ (see, e.g.,~\cite{ckns,axioms}).

\begin{lemma}\label{lemma1:apx}
Recall that $\Tc =~ \Tc(\TT_{\rm init})$. 
Let $\varrho: \Tc \to \R_{\ge0}$ satisfy that $\inf_{\TT \in \Tc} \rho(\TT) = 0$.
Let $s > 0$ and define
\begin{align}\label{eq2:lemma:apx}
 \A_s^{\rm c}(\varrho) := \sup_{N \in \N_0} \Big( (N+1)^s  \min_{\TT \in \Tc_N} \varrho(\TT) \Big),
 \text{ where } \Tc_N := \set{\TT \in \Tc}{\#\TT - \#\TT_{\rm init} \le N}.
 \hspace*{-5mm}
\end{align}
With $\Tc_\eps(\varrho) := \set{\TT \in \Tc}{\varrho(\TT) \le \eps} \neq \emptyset$ for $\eps > 0$, there holds the equality 
\begin{align}\label{eq3:lemma:apx}
  \A_s^{\rm c}(\varrho)
 =  \sup_{\eps>0} \Big( \eps \, \min_{\TT \in \Tc_\eps(\varrho)}(\#\TT - \#\TT_{\rm init})^s \Big).
\end{align}
The minimum in~\eqref{eq2:lemma:apx} exists, since all $\Tc_N$ are finite sets. The minimum in~\eqref{eq3:lemma:apx} exists, since the cardinality is a mapping $\#: \Tnc \to \N$. In either case, the minimizers might not be unique.
If $\Tc = \Tc(\TT_{\rm init})$ is replaced by $\Tnc=~\Tnc(\TT_{\rm init})$, one can define $\A_s^{\rm nc}$, $\Tnc_N$, and $\Tnc_{\varepsilon}(\varrho)$ similarly, and the assertion \eqref{eq3:lemma:apx} holds accordingly.
\end{lemma}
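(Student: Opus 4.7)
The plan is to prove the two inequalities between $\A_s^{\rm c}(\varrho)$ and the right-hand side of~\eqref{eq3:lemma:apx}, which I abbreviate as $M'$. The essential ingredients, used throughout, are that $\Tc_N$ is finite and nonempty (it contains $\TT_{\rm init}$), so $\varrho_N := \min_{\TT \in \Tc_N} \varrho(\TT)$ exists, while $\Tc_\eps(\varrho)$ is nonempty for every $\eps > 0$ by the standing assumption $\inf_{\TT \in \Tc} \varrho(\TT) = 0$, so the cardinality minimum $N(\eps) := \min_{\TT \in \Tc_\eps(\varrho)}(\#\TT - \#\TT_{\rm init})$ also exists. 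Both inequalities then boil down to a simple contrapositive between the two selection principles.

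For $M' \leq \A_s^{\rm c}(\varrho)$, I fix $\eps > 0$. If $N(\eps) = 0$ nothing is to show; otherwise I set $N := N(\eps) - 1 \geq 0$. By minimality of $N(\eps)$, every $\TT \in \Tc_N$ satisfies $\varrho(\TT) > \eps$, whence $\varrho_N > \eps$. Therefore $(N+1)^s\,\varrho_N > N(\eps)^s\,\eps$, and since $(N+1)^s\,\varrho_N \leq \A_s^{\rm c}(\varrho)$ by definition, I obtain $\eps\, N(\eps)^s \leq \A_s^{\rm c}(\varrho)$. Taking the supremum over $\eps$ yields the inequality.

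For $\A_s^{\rm c}(\varrho) \leq M'$, I fix $N \in \N_0$. If $\varrho_N = 0$ nothing is to show; otherwise I pick any $0 < \eps < \varrho_N$. Then any $\TT \in \Tc_\eps(\varrho)$ satisfies $\varrho(\TT) \leq \eps < \varrho_N$, which by definition of $\varrho_N$ forces $\#\TT - \#\TT_{\rm init} > N$. Thus $N(\eps) \geq N+1$, and the definition of $M'$ yields $\eps\,(N+1)^s \leq M'$. Letting $\eps \nearrow \varrho_N$ gives $(N+1)^s\,\varrho_N \leq M'$, and taking the supremum over $N$ concludes.

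The argument is purely set-theoretic and uses no structure specific to conforming refinements, so it transfers verbatim to $\Tnc$ to yield the analogous equality. I do not foresee any real obstacle; the only minor pitfall is keeping track of strict versus weak inequalities (here $\varrho(\TT) > \eps$ on $\Tc_{N(\eps)-1}$ on one side, $\varrho(\TT) \leq \eps < \varrho_N$ on the other), which is precisely why I shift the index by one on one side and pass to the limit $\eps \nearrow \varrho_N$ on the other.
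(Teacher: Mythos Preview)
Your proof is correct and follows essentially the same approach as the paper: both directions use the same index shift (your $N := N(\eps)-1$ matches the paper's minimal $N$ with $\varrho_N \le \eps$ and then stepping to $N-1$) and the same limiting argument (your $\eps \nearrow \varrho_N$ is the paper's $\lambda \to 1$ with $\eps = \lambda\varrho_N$). The only cosmetic difference is notation.
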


\begin{proof}
We only consider the set $\Tc$ of conforming triangulations, the proof for the set  $\Tnc$ of non-conforming triangulations  follows along the same lines.
For $N \in \N_0$, define $\eps_N := \min_{\TT \in \Tc_N} \varrho(\TT) \ge 0$. 

{\bf Step~1.} 
To prove ``$\ge$'' in~\eqref{eq3:lemma:apx}, let $\eps > 0$.
If $0 < \eps < \eps_0$, there exists a minimal $N \in \N_0$ such that $\min_{\TT \in \Tc_N} \varrho(\TT) \le \eps$. In particular, it follows that $N > 0$, $\Tc_N \cap \Tc_\eps(\varrho) \neq \emptyset$, and $\eps < \min_{\TT \in \Tc_{N-1}} \varrho(\TT)$. This yields that
\begin{align}\label{eq1+:lemma:apx}
 \eps  \, \min_{\TT \in \Tc_\eps(\varrho)}(\#\TT - \#\TT_{\rm init})^s
 \le \min_{\TT \in \Tc_{N-1}} \varrho(\TT) \, N^s 
 \le \sup_{N \in \N_0} \Big( (N+1)^s \min_{\TT \in \Tc_N} \varrho(\TT) \Big) = \A_s^{\rm c}(\varrho).
\end{align}
If $\eps_0 \le \eps$, then $\TT_{\rm init} \in \Tc_{\eps_0}(\varrho) \subseteq \Tc_{\eps}(\varrho)$
and hence the left-hand side of~\eqref{eq1+:lemma:apx} is zero, and~\eqref{eq1+:lemma:apx} thus remains true.
Taking the supremum over all $\eps > 0$, we prove ``$\ge$'' in~\eqref{eq3:lemma:apx}.

{\bf Step~2.}
To prove ``$\le$'' in~\eqref{eq3:lemma:apx}, let $N \in \N_0$.
If $\eps_N > 0$, the definition of $\eps_N$ yields that $\#\TT -\#\TT_{\rm init} \ge N+1$ for all $\TT \in \Tc_{\lambda\eps_N}(\varrho)$ and all $0<\lambda<1$. 
This proves that
\begin{align}\label{eq2+:lemma:apx}
 (N+1)^s \! \min_{\TT \in \Tc_N} \varrho(\TT) 
 \le \min_{\TT \in \Tc_{\lambda\eps_N}(\varrho)} (\#\TT - \#\TT_{\rm init})^s \, \eps_N
 \le \frac{1}{\lambda} \, \sup_{\eps>0}  \big(\eps \! \min_{\TT \in \Tc_\eps(\varrho)}(\#\TT - \#\TT_{\rm init})^s\big).
 \hspace*{-3mm}
\end{align}
If $\eps_N = 0$, then the left-hand side of~\eqref{eq2+:lemma:apx} is zero, and the overall estimate thus remains true. 
Taking the supremum over all $N \in \N_0$, we prove ``$\le$'' in~\eqref{eq3:lemma:apx}
for the limit $\lambda\to1$.
\end{proof}

The following lemma specifies $\varrho(\TT)$ and hence introduces the precise approximation class of the present work.

\begin{lemma}\label{lemma2:apx}
For $s > 0$, let
\begin{align}\label{eq:the rho}
 \A_s^{\rm c} := \A_s^{\rm c}(\varrho),
 \quad \text{where} \quad
 \varrho(\TT): = \eta(\TT; \U_\TT[p_\TT], p_\TT) + \norm{\div \U_\TT[p_\TT]}{\Omega}
 \quad \text{for } \TT \in \Tc.
\end{align}
Then, $\varrho$ satisfies the assumptions of Lemma~\ref{lemma1:apx}. Moreover, there exists a constant 
$C > 0$, which depends only on $\Cstab$ and $\Crel$, such that
\begin{align}
 \label{eq1:lemma2:apx}
 \varrho(\TT) 
 &\le C \, \min_{Q_\TT \in \P(\TT)}
 \big( \eta(\TT; \U_\TT[Q_\TT], Q_\TT) + \norm{\div \U_\TT[Q_\TT]}{\Omega} \big).
\end{align}
\end{lemma}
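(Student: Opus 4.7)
My plan is to split the lemma into its two assertions: (i) the admissibility condition $\inf_{\TT\in\Tc}\varrho(\TT)=0$ required by Lemma~\ref{lemma1:apx}, and (ii) the quasi-best approximation bound~\eqref{eq1:lemma2:apx}, which is the real content. The key ingredients for (ii) will be stability (Lemma~\ref{lemma:stability}) with $\widehat\TT:=\TT$, the $\mathbb{P}$-to-$\mathbb{V}$ Lipschitz bound~\eqref{eq:u to p}, the trivial inequality~\eqref{eq:appendix:div} $\norm{\div\v}{\Omega}\le\norm{\v}{\V}$, and reliability for the reduced Stokes problem~\eqref{eq:reliability:reduced} applied with the partition $\PP:=\TT$ (admissible because $\TT\in\Tnc(\TT)\cap\Tc$ trivially).

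For (ii), the plan is: fix $Q_\TT\in\P(\TT)$ and first apply~\eqref{eq:stability} with $\VV_{\widehat\TT}:=\U_\TT[p_\TT]$, $\W_\TT:=\U_\TT[Q_\TT]$, $R_\PP:=Q_\TT$, and $\widehat\TT=\TT$, to dominate $\eta(\TT;\U_\TT[p_\TT],p_\TT)$ by $\eta(\TT;\U_\TT[Q_\TT],Q_\TT)$ plus a multiple of $\norm{\U_\TT[p_\TT]-\U_\TT[Q_\TT]}{\V}+\norm{p_\TT-Q_\TT}{\P}$. I would then collapse the velocity difference via~\eqref{eq:u to p}, which reduces everything to $\norm{p_\TT-Q_\TT}{\P}$. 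The divergence contribution is handled in parallel: the triangle inequality, combined with~\eqref{eq:appendix:div} and~\eqref{eq:u to p}, gives $\norm{\div\U_\TT[p_\TT]}{\Omega}\le\norm{\div\U_\TT[Q_\TT]}{\Omega}+\norm{p_\TT-Q_\TT}{\P}$. Finally, to absorb $\norm{p_\TT-Q_\TT}{\P}$ into the right-hand side of~\eqref{eq1:lemma2:apx}, I invoke the reduced-problem reliability~\eqref{eq:reliability:reduced} with $\PP:=\TT$, obtaining $\norm{p_\TT-Q_\TT}{\P}\le\Crel\bigl(\eta(\TT;\U_\TT[Q_\TT],Q_\TT)+\norm{\Pi_\TT\div\U_\TT[Q_\TT]}{\Omega}\bigr)$, and then discard $\Pi_\TT$ since the $L^2$-orthogonal projection is a contraction. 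Summing the three estimates yields~\eqref{eq1:lemma2:apx} with an explicit constant of the form $C=1+\Crel+2\Cstab\Crel$.

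For (i), I would exhibit a sequence of uniform refinements $\TT_n\in\Tc$ with $\varrho(\TT_n)\to0$. Density of $\bigcup_n\P(\TT_n)$ in $\P$ gives $\norm{p-p_{\TT_n}}{\P}\to0$, and combining this with C\'ea's lemma applied to~\eqref{eq:weakform:auxiliary} and the Lipschitz bound~\eqref{eq:u to p} produces $\norm{\u-\U_{\TT_n}[p_{\TT_n}]}{\V}\to0$. Using $\div\u=0$ together with~\eqref{eq:appendix:div}, the divergence contribution then satisfies $\norm{\div\U_{\TT_n}[p_{\TT_n}]}{\Omega}\le\norm{\u-\U_{\TT_n}[p_{\TT_n}]}{\V}\to0$. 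For the estimator part, I would invoke the classical continuous (element-wise) efficiency of the residual estimator $\eta$, which bounds $\eta(\TT_n;\U_{\TT_n}[p_{\TT_n}],p_{\TT_n})$ by the error plus standard data oscillation, both of which vanish under uniform refinement whenever $\f\in L^2(\Omega)^d$. Although the paper's entire rate analysis is designed to avoid efficiency, it is not circular to invoke it here since (i) is only needed for the formal setup of the approximation class.

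The main obstacle I anticipate is purely notational rather than conceptual: one must be comfortable letting the conforming triangulation $\TT$ play the role of the (a~priori non-conforming) partition $\PP$ in both~\eqref{eq:reliability:reduced} and the stability lemma. Once this degenerate case is admitted, the proof of (ii) is a short chain of triangle and Lipschitz inequalities, and (i) is standard.
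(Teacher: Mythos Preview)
Your argument for~\eqref{eq1:lemma2:apx} is correct and essentially coincides with the paper's: both use stability~\eqref{eq:stability}, the Lipschitz bound~\eqref{eq:u to p}, and a reliability estimate to absorb $\norm{p_\TT-Q_\TT}{\P}$. The only cosmetic difference is that you invoke the reduced reliability~\eqref{eq:reliability:reduced} with $\PP=\TT$ directly, whereas the paper first uses the best-approximation property $\norm{p_\TT-Q_\TT}{\P}\le\norm{p-Q_\TT}{\P}$ and then the full Stokes reliability~\eqref{eq:reliability:stokes}. Both routes yield a constant depending only on $\Cstab$ and $\Crel$.

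For the admissibility $\inf_{\TT\in\Tc}\varrho(\TT)=0$, your approach is genuinely different. The paper does \emph{not} argue via uniform refinement and efficiency; instead it first proves~\eqref{eq1:lemma2:apx} and then combines it with linear convergence (Theorem~\ref{theorem:linearconvergence}) along the algorithmic sequence $(\TT_{ijk})$:
\begin{align*}
 \inf_{\TT\in\Tc}\varrho(\TT)
 \le \inf_{(i,j,k)\in\QQ}\varrho(\TT_{ijk})
 \stackrel{\eqref{eq1:lemma2:apx}}{\lesssim}
 \inf_{(i,j,k)\in\QQ}\big(\eta_{ijk}+\norm{\div\U_{ijk}}{\Omega}\big)=0.
\end{align*}
This is slicker in that it reuses machinery already established in the paper and avoids any appeal to efficiency (the paper's whole point). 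Your route via uniform refinement, density, and continuous efficiency is more elementary and self-contained; in particular it does not depend on the parameter smallness hypotheses hidden in Theorem~\ref{theorem:linearconvergence}, so it shows that $\varrho$ satisfies the hypothesis of Lemma~\ref{lemma1:apx} unconditionally. Either argument is acceptable.
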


\begin{proof}
Let $Q_\TT \in \P(\TT)$. 
According to~\eqref{eq:u to p}, we have   that  $\norm{\U_\TT[p_\TT] - \U_\TT[Q_\TT]}{\V}  \le  \norm{p_\TT - Q_\TT}{\P}$. Since $p_\TT$ is the best approximation of $p$ in $\P(\TT)$, it holds that $\norm{p_\TT - Q_\TT}{\P} \le \norm{p - Q_\TT}{\P}$. Hence, 
stability~\eqref{eq:stability} and reliability~\eqref{eq:reliability:stokes} of the error estimator prove that
\begin{align*}
 \varrho(\TT) 
 &\,\,=\,\, \eta(\TT; \U_\TT[p_\TT], p_\TT) + \norm{\div \U_\TT[p_\TT]}{\Omega}
 \\
 &
~\reff{eq:stability}\lesssim \eta(\TT; \U_\TT[Q_\TT], Q_\TT) 
 + \norm{\U_\TT[p_\TT] - \U_\TT[Q_\TT]}{\V} + \norm{p_\TT - Q_\TT}{\P}+ \norm{\div \U_\TT[Q_\TT]}{\Omega}
 \\&
\,\, \lesssim\,\, \eta(\TT; \U_\TT[Q_\TT], Q_\TT) + \norm{\div \U_\TT[Q_\TT]}{\Omega}
 + \norm{p - Q_\TT}{\P}.
 \\&
~\reff{eq:reliability:stokes}\lesssim \eta(\TT; \U_\TT[Q_\TT], Q_\TT) + \norm{\div \U_\TT[Q_\TT]}{\Omega}.
\end{align*}
This proves~\eqref{eq1:lemma2:apx}.
%
With linear convergence (Theorem~\ref{theorem:linearconvergence}), this yields that
\begin{align*}
\inf_{\TT \in \Tc}\varrho(\TT) 
\le \inf_{(i,j,k) \in \QQ} \varrho(\TT_{ijk})
\lesssim \inf_{(i,j,k) \in \QQ} \big( \eta_{ijk} + \norm{\div \U_{ijk}}{\Omega} \big) = 0.
\end{align*}
This concludes the proof.
\end{proof}

Together with Theorem~\ref{theorem:linearconvergence}, the  following theorem is the main result of this work. It states optimal convergence of Algorithm~\ref{algorithm:uzawa}.
The proof is given in Section~\ref{sec:proof of optconv}.

\begin{theorem}\label{thm:optimal}
Let $0<\vartheta<\Cdiv^{-1}$ and $0<\theta<\theta_{\rm opt}:=(1+\Cstab^2 C_{\rm drel}^2)^{-1}$.
Suppose that
\begin{align}\label{eq:kappa}
\kpuno<\theta^{1/2}\Cstab\quad\text{and}\quad\theta<  \sup_{\delta>0} \frac{(1-\kpuno\Cstab)^2\theta_{\rm opt}-(1+\delta^{-1})\kpuno^2\Cstab^2}{1+\delta},
\end{align}
i.e., $0\le\,\kpuno<1$ is sufficiently small.
Moreover, let $0<\kpdue,\kptre<1$ be sufficiently small in the sense of Lemma~\ref{lemma:convergence:j},   Lemma~\ref{lemma:convergence:i}, and  Lemma~\ref{lem:comparison} below.
Then, for all $s>0$, it holds that 
\begin{align}\label{eq:optimal convergence}
 \A_s^{\rm c}<\infty
\quad \Longleftrightarrow\quad
 \sup_{(i,j,k)\in\QQ} \big( \eta_{ijk} + \norm{\div \U_{ijk}}{\Omega} \big)\big(\#\TT_{ijk}-\#\TT_{\rm init}+1\big)^s <\infty.
\end{align}
\end{theorem}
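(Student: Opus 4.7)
The plan is to prove the two directions of the equivalence separately, following the Stevenson/CKNS paradigm adapted to the three-loop Uzawa scheme, where linear convergence of $\mu_{ijk}:=\eta_{ijk}+\norm{\div\U_{ijk}}{\Omega}$ (Theorem~\ref{theorem:linearconvergence}) together with Lemma~\ref{lem:uniform summability} lets one collapse partial sums to a single-term bound.

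\textbf{Easy direction ``$\Leftarrow$''.} Assuming $C_\star:=\sup_{(i,j,k)\in\QQ}\mu_{ijk}(\#\TT_{ijk}-\#\TT_{\rm init}+1)^s$ is finite, I would, for given $N\in\N_0$, select the largest index $(i,j,k)\in\QQ$ with $\#\TT_{ijk}-\#\TT_{\rm init}\le N$. Since $P_{ij}\in\P(\PP_i)\subseteq\P(\TT_{ijk})$, applying Lemma~\ref{lemma2:apx} with $Q_\TT=P_{ij}$ and invoking~\eqref{eq:equivalent est} yields $\varrho(\TT_{ijk})\lesssim\mu_{ijk}\le C_\star(\#\TT_{ijk}-\#\TT_{\rm init}+1)^{-s}$. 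Because only step~(iv) refines $\TT$ and each refinement expands $\#\TT$ by at most a multiplicative factor $1+\Cclosure$ (consequence of~\eqref{mesh:closure}), the ratio $(N+1)/(\#\TT_{ijk}-\#\TT_{\rm init}+1)$ is uniformly bounded, so $(N+1)^s\min_{\TT\in\Tc_N}\varrho(\TT)\lesssim C_\star$. The characterization of Lemma~\ref{lemma1:apx} then delivers $\A_s^{\rm c}<\infty$.

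\textbf{Hard direction ``$\Rightarrow$''.} Assume $\A_s^{\rm c}<\infty$. The argument has three steps. First, a D\"orfler comparison lemma (anticipated to be Lemma~\ref{lem:comparison}) asserts that whenever step~(iv) fires at $(i,j,k)\in\QQ$, there exists a set $\mathcal R\subseteq\TT_{ijk}$ with $\#\mathcal R\lesssim(\A_s^{\rm c})^{1/s}\mu_{ijk}^{-1/s}$ that satisfies $\theta\,\eta_{ijk}^2\le\eta(\mathcal R;\U_{ijk},P_{ij})^2$. I would construct $\mathcal R:=\TT_{ijk}\setminus(\TT_{ijk}\oplus\TT_\varepsilon)$, where $\TT_\varepsilon\in\Tc$ is a witness of the approximation class for $\varepsilon$ chosen as a small multiple of $\mu_{ijk}$; the cardinality bound is immediate from the overlay estimate~\eqref{mesh:overlay}, while the D\"orfler inequality follows by combining stability~\eqref{eq:stability} and discrete reliability (Lemma~\ref{lemma:discrete_reliability}) in the standard CKNS fashion. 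Second, quasi-minimality of $\MM_{ijk}$ gives $\#\MM_{ijk}\le\Cmark\#\mathcal R$, and since only step~(iv) refines $\TT$, the closure estimate~\eqref{mesh:closure} yields
\begin{align*}
\#\TT_{ijk}-\#\TT_{\rm init}\lesssim\sum_{\substack{(i',j',k')<(i,j,k)\\ \text{step (iv)}}}\#\MM_{i'j'k'}\lesssim(\A_s^{\rm c})^{1/s}\sum_{(i',j',k')\le(i,j,k)}\mu_{i'j'k'}^{-1/s}.
\end{align*}
Third, linear convergence (Theorem~\ref{theorem:linearconvergence}) together with the implication (c)$\Rightarrow$(b) of Lemma~\ref{lem:uniform summability} bounds the partial sum by $C\mu_{ijk}^{-1/s}$, so raising to the $s$-th power produces $\mu_{ijk}(\#\TT_{ijk}-\#\TT_{\rm init}+1)^s\lesssim\A_s^{\rm c}$, as desired.

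\textbf{Main obstacle.} Step~1 is by far the delicate one: the CKNS overlay argument must be carried out in the mixed setting where (a) the velocity solve is only inexact, controlled by $\kpuno$, (b) the estimator couples velocity residuals with $\norm{\div\U_{ijk}}{\Omega}$, and (c) $P_{ij}$ lives on the coarser partition $\PP_i$ rather than on $\TT_{ijk}$. The step~(iv) guards $\eta_{ijk}>\kptre^{-1}\norm{\Pi_i\div\U_{ijk}}{\Omega}$ and $\eta_{ijk}+\norm{\Pi_i\div\U_{ijk}}{\Omega}>\kpdue\mu_{ijk}$ are exactly what is needed to reduce $\mu_{ijk}\simeq\eta_{ijk}$, and the quantitative tunings $\theta<\theta_{\rm opt}$, $\vartheta<\Cdiv^{-1}$, and~\eqref{eq:kappa} on $\kpuno$ are precisely the conditions that make all cross-terms absorbable. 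Once Step~1 is in hand, Steps~2 and~3 follow the CKNS template verbatim.
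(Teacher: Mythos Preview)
Your outline for the easy direction and for Steps~2--3 of the hard direction is essentially the paper's argument. The genuine gap is in your Step~1, the construction of the comparison set $\mathcal R$.

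The claim that ``the D\"orfler inequality follows by combining stability and discrete reliability in the standard CKNS fashion'' does not go through here. On the witness mesh $\TT_\varepsilon$ the small quantity is $\eta(\TT_\varepsilon;\U_{\TT_\varepsilon}[p_{\TT_\varepsilon}],p_{\TT_\varepsilon})$, built from the best pressure $p_{\TT_\varepsilon}$, whereas the D\"orfler inequality must be proved for $\eta(\TT_{ijk};\U_{ijk},P_{ij})$, built from the current iterate $P_{ij}$. When you transfer the estimator to the overlay $\widehat\TT=\TT_{ijk}\oplus\TT_\varepsilon$ and change the pressure argument via stability~\eqref{eq:stability}, you pick up $\Cstab\norm{P_{ij}-p_{\TT_\varepsilon}}{\P}$, which by reliability is $\lesssim \varepsilon+\mu_{ijk}$. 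The $\mu_{ijk}$ contribution carries a \emph{fixed} constant of order $\Cstab\Crel'$; it cannot be made $\le\lambda\,\eta_{ijk}$ for the small $\lambda$ required by~\eqref{eq:doerfler help}, no matter how small you choose $\varepsilon$. The paper resolves this by an additional $\ell$-fold \emph{uniform} refinement of a base mesh $\widehat\TT_0$, using estimator contraction $\qctr^\ell$ to kill that fixed constant.

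This fix, however, forces a second issue you have not addressed. Uniform refinement multiplies cardinalities by $\Cson^\ell$, so if $\widehat\TT_0$ contained $\TT_{ijk}$ you would end up with $\#\mathcal R\lesssim\#\TT_{ijk}$, which is circular. The paper therefore takes $\widehat\TT_0:=\close(\PP_i)\oplus\overline\TT$ (note: $\PP_i$, not $\TT_{ijk}$), and only overlays with $\TT_{ijk}$ \emph{after} the uniform refinements. The price is that $\#\close(\PP_i)$ now enters the bound for $\#\MM_{ijk}$, and controlling it is an entirely separate argument (Step~5 of the proof of Lemma~\ref{lem:comparison}): one invokes the quasi-optimality of Algorithm~\ref{function:refinePressure} via Lemma~\ref{lemma:refine:pressure}, sums the increments $\#\PP_{i'+1}-\#\PP_{i'}$, and uses the $i$-contraction of Lemma~\ref{lemma:convergence:i} together with Lemma~\ref{lem:uniform summability}(b) to collapse $\sum_{i'<i}\norm{p-P_{i'\jj}}{\P}^{-1/s}$. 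This is precisely where the hypothesis $\vartheta<\Cdiv^{-1}$ is used. Your proposal mentions neither the pressure-partition cardinality nor the Binev quasi-optimality, and without them the comparison lemma cannot be closed.
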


The following remark relates our definition of the approximation class from Lemma~\ref{lemma2:apx} to that of the so-called total error. We refer to Appendix~\ref{appendix:remark} for the proof. 

\begin{remark}\label{dpr:remark-problem}
\rm{(i)} The seminal work~\cite{ks} employs two approximation classes:
\begin{itemize}
\item $\A_s^{\rm c}(\u) := \A_s^{\rm c}(\varrho_{\u})$ for $\varrho_{\u}(\TT) := \min\limits_{\VV_\TT \in \V(\TT)} \norm{\u - \VV_\TT}{\V}$. 
\item $\A_s^{\rm nc}(p) := \A_s^{\rm nc}(\varrho_p)$ for $\varrho_p(\PP) := \min\limits_{Q_\PP \in \P(\PP)} \norm{p - Q_\PP}{\P} = \norm{p - p_\PP}{\P}$.
\end{itemize}
With the data oscillations for any $\PP\in\Tnc$, 
$\osc^2:=\sum_{T\in\PP} \osc_T^2$ where $\osc_T^2:= |T|^{2/n} \, \norm{(1-\Pi_\PP)\f}{T}^2$ for all $T\in\PP$,
we additionally define the approximation class:
\begin{itemize}
\item $\A_s^{\rm nc}(\f) := \A_s^{\rm nc}(\varrho_{\f})$ for $\varrho_{\f}(\PP) :=\osc(\PP)$. 
\end{itemize}

Clearly, the definitions of $\varrho_p$,  $\varrho_{\u}$, and $\varrho_{\f}$ satisfy the assumptions of Lemma~\ref{lemma1:apx}. Moreover,
\begin{align}\label{eq1:dpr:remark-problem}
\A_s^{\rm nc}(p) \simeq \A_s^{\rm c}(p) := \A_s^{\rm c}(\varrho_p)\quad\text{and}\quad\A_s^{\rm nc}(\f) \simeq \A_s^{\rm c}(\f) := \A_s^{\rm c}(\varrho_{\f}).
\end{align}

\rm{(ii)} If we additionally define 
\begin{itemize}
\item $\A_s^{\rm c}(\u,p,\f) := \A_s^{\rm c}(\varrho_{\u,p,\f})$ for $\varrho_{\u,p,\f}(\TT) := \varrho_{\u}(\TT) + \varrho_p(\TT) + \varrho_{\f}(\TT)$,
\end{itemize}
then it holds for all $s>0$ that
\begin{align}\label{eq:apx:remark}
 \frac{1}{3} \, \big(\A_s^{\rm c}(\u) + \A_s^{\rm c}(p) +\A_s^{\rm c}(\f) \big) 
 \le \A_s^{\rm c}(\u,p,\f)
 \le 3^s \, \big(\A_s^{\rm c}(\u) +  \A_s^{\rm c}(p)+\A_s^{\rm c}(\f) \big).
\end{align}
In the literature, cf.\ \cite{ckns,axioms}, the term $\varrho_{\u,p,\f}(\TT)$ is usually referred to as \textit{total error}.

\rm{(iii)}
There hold efficiency and reliability in the sense that
\begin{align}\label{eq3:dpr:remark-problem}
\A_s^{\rm c} \lesssim \A_s^{\rm c}(\u,p,\f) \le \Crel \, \A_s^{\rm c},
\end{align} 
i.e., our approximation class coincides with the one of the total error. 
In particular, if the volume force $\f$ is a $\TT_{\rm init}$-piecewise polynomial of degree less or equal than $m-1$, the oscillations vanish and our approximation class also coincides with that of~\cite[Section~7]{ks}. 

\rm{(iv)}
Note that for smooth $\u,p$, and $\f$ and uniform mesh-refinement, one expects an optimal algebraic convergence rate of $s= m/d$. 
For non-smooth data and adaptive mesh-refinement, the involved approximation classes can be characterized in terms of Besov regularity; see, e.g., \cite{bddp02,gm08,gan17}.
\hfill\qed
\end{remark}

\subsection{Proof of Theorem~\ref{thm:optimal}}\label{sec:proof of optconv}

We start with an auxiliary lemma, which was originally proved in~\cite[Lemma~6.3]{ks}.

\begin{lemma}\label{lemma:refine:pressure}
Let $0<\vartheta<\vartheta'<\Cdiv^{-1}$.
Let $0 < \omega < 1$ be sufficiently small such that 
\begin{align}\label{eq:q2 assumption}
 0 < q := \Cdiv \, \frac{\omega + \vartheta'}{1-\omega} < 1,
\end{align}
 Let $\PP \in \Tnc$ and $\TT\in\Tc(\PP)$. Let $Q_\PP \in \P(\PP)$. 
Let $\VV_\TT \in \V(\TT)$ satisfy that
\begin{align}\label{eq:w assumption}
 \norm{\div(\u[Q_\PP] - \VV_\TT)}{\Omega} \le \omega \, \norm{\div \VV_\TT}{\Omega}.
\end{align}
Then,  $\binev(\PP, \TT,\VV_\TT;\vartheta)$ from Algorithm~\ref{function:refinePressure} 
returns 
$\PP' \in \Tnc(\PP)$ such that the following implication is satisfied for all $\overline\PP \in \Tnc(\PP)$
\begin{align}\label{eq:binev2}
\norm{p - p_{\overline\PP}}{\P}^{2} \le (1-q^2) \, \norm{p - Q_\PP}{\P}^{2}\quad\Longrightarrow\quad \#\PP' - \#\PP \le \Cbin \, ( \#\overline\PP - \#\TT_{\rm init}).
\end{align}
\end{lemma}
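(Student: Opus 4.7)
My plan is to construct, for each admissible $\overline\PP$, a ``witness'' refinement $\widetilde\PP \in \Tnc(\PP)$ satisfying both (a) $\#\widetilde\PP - \#\PP \le \#\overline\PP - \#\TT_{\rm init}$ and (b) the stronger bulk criterion $\vartheta'\,\norm{\div\VV_\TT}{\Omega} \le \norm{\Pi_{\widetilde\PP}\div\VV_\TT}{\Omega}$. Once such a $\widetilde\PP$ is produced, the quasi-optimality statement of Algorithm~\ref{function:refinePressure} recalled right after its description (taken from~\cite{b15}) directly yields $\#\PP' - \#\PP \le \Cbin\,(\#\widetilde\PP - \#\PP) \le \Cbin\,(\#\overline\PP - \#\TT_{\rm init})$, which is the desired bound.

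The canonical choice is the coarsest common refinement $\widetilde\PP := \PP \oplus \overline\PP$, for which the overlay estimate~\eqref{mesh:overlay} immediately gives~(a). To establish~(b), I argue by contradiction: suppose $\norm{\Pi_{\widetilde\PP}\div\VV_\TT}{\Omega} < \vartheta'\,\norm{\div\VV_\TT}{\Omega}$, and consider first the generic case $\norm{\div\VV_\TT}{\Omega} > 0$. Since $\widetilde\PP$ refines $\overline\PP$, the best-approximation property in the Hilbert norm $\norm{\cdot}{\P}$ gives $\norm{p-p_{\widetilde\PP}}{\P} \le \norm{p-p_{\overline\PP}}{\P}$, so the hypothesis from~\eqref{eq:binev2} propagates to $\widetilde\PP$. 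Using $Q_\PP \in \P(\PP) \subseteq \P(\widetilde\PP)$, the Pythagoras identity in the $\dual{S\,\cdot}{\cdot}_\Omega$-inner product delivers
\[
\norm{p_{\widetilde\PP}-Q_\PP}{\P}^2 = \norm{p-Q_\PP}{\P}^2 - \norm{p-p_{\widetilde\PP}}{\P}^2 \ge q^2\,\norm{p-Q_\PP}{\P}^2.
\]

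Combining Lemma~\ref{lemma:estimator:div} applied on $\widetilde\PP$, the triangle inequality, $L^2$-contractivity of $\Pi_{\widetilde\PP}$, the assumption~\eqref{eq:w assumption}, and the contradiction hypothesis, one obtains
\[
\norm{p_{\widetilde\PP}-Q_\PP}{\P} \le \Cdiv\,\norm{\Pi_{\widetilde\PP}\div\u[Q_\PP]}{\Omega} < \Cdiv\,(\vartheta'+\omega)\,\norm{\div\VV_\TT}{\Omega},
\]
while the reverse triangle inequality together with~\eqref{eq:w assumption} and~\eqref{eq2:estimator:div} yields $(1-\omega)\,\norm{\div\VV_\TT}{\Omega} \le \norm{\div\u[Q_\PP]}{\Omega} \le \norm{p-Q_\PP}{\P}$. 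Chaining these three estimates produces $q\,(1-\omega) < \Cdiv\,(\vartheta'+\omega)$, which contradicts the definition~\eqref{eq:q2 assumption} of $q$. The degenerate case $\norm{\div\VV_\TT}{\Omega} = 0$ is trivial: then Algorithm~\ref{function:refinePressure} terminates immediately with $\PP' = \PP$, so $\#\PP' - \#\PP = 0$.

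The main obstacle is the bookkeeping in the contradiction step: one must arrange that simultaneously (i)~the Pythagoras identity for $p_{\widetilde\PP}$ is applicable, which requires $Q_\PP \in \P(\widetilde\PP)$; (ii)~the $\Cdiv$-sharp equivalence of Lemma~\ref{lemma:estimator:div} is available on the partition $\widetilde\PP$; and (iii)~the ``data approximation'' assumption~\eqref{eq:w assumption} on $\VV_\TT$, which lives on the finer triangulation $\TT$, transfers into divergence estimates on $\widetilde\PP$. Taking $\widetilde\PP := \PP \oplus \overline\PP$ is precisely the choice that makes all three refinement relations hold at once while controlling cardinality via~\eqref{mesh:overlay}.
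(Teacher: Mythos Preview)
Your proposal is correct and follows essentially the same route as the paper: both take $\widetilde\PP := \PP \oplus \overline\PP$, use the overlay estimate~\eqref{mesh:overlay} for the cardinality bound, combine the Pythagoras identity with Lemma~\ref{lemma:estimator:div} and assumption~\eqref{eq:w assumption} to verify the $\vartheta'$-bulk criterion on $\widetilde\PP$, and then invoke the quasi-optimality of Algorithm~\ref{function:refinePressure}. The only cosmetic difference is that you package the bulk-criterion step as a contradiction argument, whereas the paper derives $\vartheta'\,\norm{\div\VV_\TT}{\Omega} \le \norm{\Pi_{\widetilde\PP}\div\VV_\TT}{\Omega}$ directly by rearranging the same chain of inequalities.
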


\begin{proof}
To see~\eqref{eq:binev2}, 
let $\overline\PP\in\Tnc(\PP)$ with $\norm{p - p_{\overline\PP}}{\P}^{2} \le (1-q^2) \, \norm{p - Q_\PP}{\P}^{2}$.
Note that 
\begin{align}\label{eq1:dpr:010218}
 \norm{p - p_{\widetilde\PP}}{\P}^{2} \le \norm{p - p_{\overline\PP}}{\P}^{2} \le (1-q^2)\, \norm{p - Q_\PP}{\P}^{2},
\quad \text{where} \quad \widetilde\PP:=\PP\oplus\overline\PP\,\in \Tnc(\PP).
\end{align}
The triangle inequality and assumption~\eqref{eq:w assumption} show that
\begin{align*}
\norm{\div \VV_\TT}{\Omega} \le\norm{\div u[Q_\PP]}{\Omega} + \norm{\div(u[Q_\PP]-V_\TT)}{\Omega}~\reff{eq:w assumption}\le  \norm{\div\u[Q_\PP]}{\Omega} + \omega \, \norm{\div \VV_\TT}{\Omega}.
\end{align*}
Hence, Lemma~\ref{lemma:estimator:div} yields that
\begin{align*}
 &q^{2}(1-\omega)^{2}\,\norm{\div \VV_\TT}{\Omega}^{2}
 \le q^{2}\,\norm{\div\u[Q_\PP]}{\Omega}^{2}
 \\& \qquad
~\reff{eq2:estimator:div}\le 
 q^{2}\,\norm{p-Q_\PP}{\P}^{2}
~\reff{eq1:dpr:010218}\le
 \norm{p-Q_\PP}{\P}^2-\norm{p-p_{\widetilde\PP}}{\P}^2
= \norm{p_{\widetilde\PP}-Q_\PP}{\P}^{2}
~\reff{eq:estimator:div}\le  
 \Cdiv^{2} \norm{\Pi_{\widetilde\PP}\nabla\cdot \u[Q_\PP]}{\Omega}^{2}.
\end{align*}
The  triangle inequality together with~\eqref{eq:w assumption} shows that 
\begin{align*}
 \norm{\Pi_{\widetilde\PP}\nabla\cdot \u[Q_\PP]}{\Omega}
\le \norm{\Pi_{\widetilde\PP}\nabla\cdot\VV_\TT}{\Omega}+ \norm{\Pi_{\widetilde\PP}\nabla\cdot( \u[Q_\PP]-\VV_\TT)}{\Omega}\reff{eq:w assumption}\le \norm{\Pi_{\widetilde\PP}\nabla\cdot\VV_\TT}{\Omega}+ \omega\,\norm{\nabla\cdot\VV_\TT}{\Omega}.
\end{align*}
Altogether, we derive that 
\begin{align*}
q(1-\omega)\,\norm{\div \VV_\TT}{\Omega}
\le 
\Cdiv \norm{\Pi_{\widetilde\PP}\nabla\cdot \u[Q_\PP]}{\Omega}
\le 
\Cdiv\big( \norm{\Pi_{\widetilde\PP}\nabla\cdot\VV_\TT}{\Omega}+\omega\,\norm{\div \VV_\TT}{\Omega}\big).
\end{align*}
By choice of $q$ in~\eqref{eq:q2 assumption}, this is equivalent to
\begin{align*}
\vartheta' \, \norm{\div \VV_\TT}{\Omega}
= \frac{q(1-\omega)-\Cdiv \, \omega}{\Cdiv}\,\norm{\div \VV_\TT}{\Omega}\le \norm{\Pi_{\widetilde\PP}\nabla\cdot\VV_\TT}{\Omega}.
\end{align*}
By definition, Algorithm~\ref{function:refinePressure} returns $\PP' \in \Tnc(\PP)$ 
such that
 \begin{align*}\# \PP' - \#\PP \le \Cbin \, (\#\widetilde\PP - \#\PP)
\reff{mesh:overlay}\le \Cbin \, (\#\overline\PP - \#\TT_{\rm init}).
\end{align*}
This concludes the proof.
\end{proof}

The heart of the proof of Theorem~\ref{theorem:linearconvergence} is  the following auxiliary lemma.

\begin{lemma}\label{lem:comparison}
Let $(i,j,k)\in\QQ$ with $k<\kk(i,j)$ and $s>0$.
%
Let $0<\vartheta<\Cdiv^{-1}$ and $0<\theta<\theta_{\rm opt}=(1+\Cstab^2 C_{\rm drel}^2)^{-1}$.
Let $0\le\kpuno<1$ be sufficiently small such that \eqref{eq:kappa} is satisfied.
For sufficiently small $0<\kpdue\ll1$ (see~\eqref{eq:kappa constraints optconv} in the proof below), there exists $C_{\rm comp}$ such that
\begin{align}\label{eq:comparison}
 \#\MM_{ijk} \le C_{\rm comp} ( 1 +(\A_s^{\rm c})^{1/s} )\big( \eta_{ijk} + \norm{\div \U_{ijk}}{\Omega} \big)^{-1/s}.
\end{align}
The constant $C_{\rm comp}>0$ depends only on  
the domain $\Omega$, $\gamma$-shape regularity, the polynomial degree $m$, the parameters $\kpuno,\kpdue,\kptre,\vartheta,\theta$ 
 $\Cmark$, and $s$.
\end{lemma}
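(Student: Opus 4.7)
The plan is to execute a CKNS-type comparison argument adapted to the Uzawa setting: construct a near-optimal reference mesh $\tilde\TT\in\Tc$ from the approximation class $\A_s^{\rm c}$, form the overlay $\TT_\star:=\tilde\TT\oplus\TT_{ijk}\in\Tc(\TT_{ijk})$, show that $\TT_{ijk}\setminus\TT_\star$ satisfies the D\"orfler property~\eqref{eq:Doerfler}, and then invoke the near-minimality of $\MM_{ijk}$ in step~(iv) of Algorithm~\ref{algorithm:uzawa} together with the overlay estimate~\eqref{mesh:overlay}.

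First, the hypothesis $k<\kk(i,j)$ implies that the conditions in steps~(ii) and~(iii) of Algorithm~\ref{algorithm:uzawa} are false, so Lemma~\ref{lemma:convergence:k}~\eqref{eq1:lemma:convergence:k} supplies the equivalence $\eta_{ijk}\le\eta_{ijk}+\norm{\div\U_{ijk}}{\Omega}\le\frac{1}{\kpdue}\bigl(1+\frac{1}{\kptre}\bigr)\eta_{ijk}$. I choose $\eps:=\mu(\eta_{ijk}+\norm{\div\U_{ijk}}{\Omega})$ with $\mu>0$ to be fixed at the end and, by Lemma~\ref{lemma1:apx} combined with Lemma~\ref{lemma2:apx}, select $\tilde\TT\in\Tc$ with $\varrho(\tilde\TT)\le\eps$ and $\#\tilde\TT-\#\TT_{\rm init}\lesssim(\A_s^{\rm c})^{1/s}\eps^{-1/s}$. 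The overlay estimate~\eqref{mesh:overlay} then yields $\#\TT_\star-\#\TT_{ijk}\le\#\tilde\TT-\#\TT_{\rm init}$. Since $p_{\tilde\TT}\in\P(\tilde\TT)\subseteq\P(\TT_\star)$, a routine combination of discrete reliability (Lemma~\ref{lemma:discrete_reliability}), stability (Lemma~\ref{lemma:stability}) on $\TT_\star\cap\tilde\TT$ and reduction (Lemma~\ref{lemma:reduction}) on $\TT_\star\setminus\tilde\TT$ produces $\eta(\TT_\star;\U_{\TT_\star}[p_{\tilde\TT}],p_{\tilde\TT})+\norm{\div\U_{\TT_\star}[p_{\tilde\TT}]}{\Omega}\lesssim\eps$.

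Next, to derive the D\"orfler bound $\theta\eta_{ijk}^2\le\eta(\TT_{ijk}\setminus\TT_\star;\U_{ijk},P_{ij})^2$, I first pass to the exact Galerkin iterate $\U_{ijk}^\star:=\U_{\TT_{ijk}}[P_{ij}]$ via~\eqref{eq:equivalent est} and~\eqref{eq:stability2}, which (using the second inequality in~\eqref{eq:kappa}) reduces the task to a D\"orfler estimate for $\U_{ijk}^\star$ with some enlarged parameter $\theta'>\theta$. The classical CKNS Young-type splitting combined with discrete reliability (Lemma~\ref{lemma:discrete_reliability}) produces, for any $\delta>0$,
\begin{align*}
\eta(\TT_{ijk}\cap\TT_\star;\U_{ijk}^\star,P_{ij})^2
&\le(1+\delta)\,\eta(\TT_\star;\U_{\TT_\star}[P_{ij}],P_{ij})^2\\
&\quad+(1+\delta^{-1})\Cstab^2\Cdrel^2\,\eta(\TT_{ijk}\setminus\TT_\star;\U_{ijk}^\star,P_{ij})^2.
\end{align*}
Using stability of $\eta$ together with~\eqref{eq:u to p} to compare $\U_{\TT_\star}[P_{ij}]$ with $\U_{\TT_\star}[p_{\tilde\TT}]$, and then reliability~\eqref{eq2:reliability:stokes} to bound $\norm{p-P_{ij}}{\P}$, I obtain $\eta(\TT_\star;\U_{\TT_\star}[P_{ij}],P_{ij})\lesssim\eps+\norm{p-P_{ij}}{\P}\lesssim\eps+(\eta_{ijk}+\norm{\div\U_{ijk}}{\Omega})$. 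Writing $\eta_{ijk}^{\star 2}=\eta(\TT_{ijk}\cap\TT_\star;\U_{ijk}^\star,P_{ij})^2+\eta(\TT_{ijk}\setminus\TT_\star;\U_{ijk}^\star,P_{ij})^2$, absorbing $\eta(\TT_{ijk}\cap\TT_\star)^2$ on both sides, and exploiting $\theta<\theta_{\rm opt}=(1+\Cstab^2\Cdrel^2)^{-1}$ to select $\delta$ optimally, yields the desired D\"orfler estimate for $\U_{ijk}^\star$, provided $\mu>0$ and $\kpdue$ are chosen sufficiently small in the sense of~\eqref{eq:kappa constraints optconv}. Transferring back to $\U_{ijk}$ via~\eqref{eq:stability2}, and finally invoking the quasi-minimality of $\MM_{ijk}$ in step~(iv), one obtains $\#\MM_{ijk}\le\Cmark(\#\tilde\TT-\#\TT_{\rm init})\lesssim\mu^{-1/s}(\A_s^{\rm c})^{1/s}(\eta_{ijk}+\norm{\div\U_{ijk}}{\Omega})^{-1/s}$, which is~\eqref{eq:comparison}.

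The main obstacle is the pressure mismatch in the central CKNS step: while $\A_s^{\rm c}$ controls the error estimator only for the optimal pressure $p_{\tilde\TT}\in\P(\tilde\TT)$, the D\"orfler criterion in the algorithm is formulated with the current iterate $P_{ij}\in\P(\PP_i)$, and these pressures live in generally incompatible discrete spaces. The resulting extra perturbation $\norm{P_{ij}-p_{\tilde\TT}}{\P}$ can only be bounded through reliability in terms of the combined quantity $\eta_{ijk}+\norm{\div\U_{ijk}}{\Omega}$, which in the regime $k<\kk(i,j)$ may exceed $\eta_{ijk}$ by the factor $\frac{1}{\kpdue}\bigl(1+\frac{1}{\kptre}\bigr)$ from Lemma~\ref{lemma:convergence:k}~\eqref{eq1:lemma:convergence:k}. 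Closing the CKNS absorption in this enlarged setting requires the specific balance between $\kpdue$, the spare room $\theta_{\rm opt}-\theta$, and the Young-parameter $\delta$ that is encoded quantitatively in the condition~\eqref{eq:kappa constraints optconv}.
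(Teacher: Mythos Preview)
Your overlay--D\"orfler plan has a genuine gap at the absorption step. After stability and~\eqref{eq:u to p}, you correctly obtain
\[
 \eta(\TT_\star;\U_{\TT_\star}[P_{ij}],P_{ij})
 \ \lesssim\ \eps + \norm{P_{ij}-p_{\tilde\TT}}{\P}
 \ \lesssim\ \eps + \bigl(\eta_{ijk}+\norm{\div\U_{ijk}}{\Omega}\bigr),
\]
but the second summand is \emph{not} small: by~\eqref{eq1:lemma:convergence:k} it is comparable to $\eta_{ijk}$ only up to the factor $\kpdue^{-1}(1+\kptre^{-1})$, which \emph{blows up} as $\kpdue\to0$. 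Hence the CKNS splitting yields at best
\[
 (\eta_{ijk}^\star)^2 - (1+\delta)\,C\,\kpdue^{-2}\,\eta_{ijk}^2
 \ \le\ \bigl(1+(1+\delta^{-1})\Cstab^2\Cdrel^2\bigr)\,\eta(\TT_{ijk}\setminus\TT_\star;\U_{ijk}^\star,P_{ij})^2,
\]
and the left-hand side is negative for small $\kpdue$, so no D\"orfler estimate follows. Your final paragraph appeals to ``the specific balance encoded in~\eqref{eq:kappa constraints optconv}'', but that condition concerns $\vartheta'$ and the pressure refinement via Lemma~\ref{lemma:refine:pressure}; it has nothing to do with this absorption and cannot repair it.

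The paper's proof circumvents exactly this obstruction in a way your sketch misses entirely. Instead of overlaying with $\TT_{ijk}$ directly, one overlays $\overline\TT$ with $\close(\PP_i)$ and then performs $\ell$ rounds of \emph{uniform} refinement to obtain $\widehat\TT_\ell$; the estimator contraction $\qctr^\ell$ kills the large factor $\kpdue^{-1}(1+\kptre^{-1})$ for fixed $\ell=\ell(\kpdue,\kptre,\theta)$, and only then does one overlay with $\TT_{ijk}$. The price is that $\#\MM_{ijk}\lesssim \Cson^\ell\bigl(\#\overline\TT-\#\TT_{\rm init}\bigr)+\Cson^\ell\,\#\close(\PP_i)$, so one must additionally bound $\#\PP_i-\#\TT_{\rm init}$. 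This is the role of the separate pressure loop: the quasi-optimality of Algorithm~\ref{function:refinePressure} (via Lemma~\ref{lemma:refine:pressure}, which is where~\eqref{eq:kappa constraints optconv} actually enters) together with linear convergence of $\norm{p-P_{i'\jj}}{\P}$ from Lemma~\ref{lemma:convergence:i} gives $\#\PP_i-\#\TT_{\rm init}\lesssim(\A_s^{\rm c})^{1/s}\bigl(\eta_{ijk}+\norm{\div\U_{ijk}}{\Omega}\bigr)^{-1/s}$. Your argument omits both the uniform-refinement device and the entire pressure-mesh counting step.
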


\begin{proof}
The proof is split into five steps.

{\bf Step 1.}\quad
Choose
\begin{align}
\varepsilon := \eta_{ijk} + \norm{\div \U_{ijk}}{\Omega}.
\end{align}
Without loss of generality, we may assume that $\eps > 0$
and $\A_s^{\rm c} < \infty$.
Then, Lemma~\ref{lemma1:apx} and Lemma~\ref{lemma2:apx} guarantee the existence of $\overline\TT \in \Tc$ such that
\begin{align}\label{eq:apx}
 \#\overline\TT-\#\TT_{\rm init}
 \le (\A_s^{\rm c}/\varepsilon)^{1/s}
 \quad \text{and} \quad
 \eta(\overline\TT; \U_{\overline\TT}[p_{\overline\TT}], p_{\overline\TT})
 + \norm{\div \U_{\overline\TT}[p_{\overline\TT}]}{\Omega}
 \le \eps.
\end{align}%

{\bf Step 2.}\quad
Define the uniformly refined triangulations
\begin{align*}
 \widehat\TT_0 := \close(\PP_i) \oplus \overline\TT
 \quad \text{and} \quad 
 \widehat\TT_{n+1}:=\refine(\widehat\TT_n,\widehat\TT_n)
 \quad \text{for all $n \in \N_0$.}
\end{align*}
Note that $P_{ij} \in \P(\PP_i) \subseteq \P(\widehat\TT_n)$.
We recall some standard arguments for adaptive mesh-refinement for the (vector-valued) Poisson model problem.
Reliability~\eqref{eq:reliability:velocity}, stability~\eqref{eq:stability}, and reduction~\eqref{eq:reduction} guarantee the existence of  $\Cctr > 0$ and $0 < \qctr < 1$ such that
\begin{align*}
 \eta(\widehat\TT_n; \U_{\widehat\TT_n}[P_{ij}], P_{ij}) 
 \le \Cctr \, \qctr^n \, \eta(\widehat\TT_0; \U_{\widehat\TT_0}[P_{ij}], P_{ij});
\end{align*}
see, e.g.,~\cite[Theorem~4.1 (i)]{axioms}.
According to, e.g.,~\cite[Section~3.4]{axioms}, there exists $\Cmon' > 0$ such that  for all
$ \widehat\TT \in \Tc, \,  
 \widehat\TT' \in~\Tc(\widehat\TT), \,
 P_{\widehat\TT} \in \P(\widehat\TT)$
\begin{align}\label{eq:monotonicity}
 \eta(\widehat\TT'; \U_{\widehat\TT'}[P_{\widehat\TT}], P_{\widehat\TT})
 \le \Cmon' \eta(\widehat\TT; \U_{\widehat\TT}[P_{\widehat\TT}], P_{\widehat\TT})
\end{align}
Note that $\Cctr$, $\qctr$, and $\Cmon'$ depend only on  $\gamma$-shape regularity and the polynomial degree $m$.
With stability~\eqref{eq:stability} and quasi-monotonicity~\eqref{eq:monotonicity}, it follows that
\begin{align*}
 &\eta(\widehat\TT_n; \U_{\widehat\TT_n}[P_{ij}], P_{ij}) 
 \le \Cctr \, \qctr^n \, \eta(\widehat\TT_0; \U_{\widehat\TT_0}[P_{ij}], P_{ij})
 \\&\quad
~\reff{eq:stability}\le \Cctr \, \qctr^n \, \big[ \eta(\widehat\TT_0; \U_{\widehat\TT_0}[p_{\overline\TT}], p_{\overline\TT})
 + \Cstab \big( \norm{\U_{\widehat\TT_0}[P_{ij}] - \U_{\widehat\TT_0}[p_{\overline\TT}]}{\V} + \norm{P_{ij} - p_{\overline\TT}}{\P} \big)
 \big]
 \\&\quad
~\reff{eq:monotonicity}\le \Cctr \, \qctr^n \, \big[ \Cmon' \, \eta(\overline\TT; \U_{\overline\TT}[p_{\overline\TT}], p_{\overline\TT})
 + \Cstab \big( \norm{\U_{\widehat\TT_0}[P_{ij}] - \U_{\widehat\TT_0}[p_{\overline\TT}]}{\V} + \norm{P_{ij} - p_{\overline\TT}}{\P} \big)
 \big].
\end{align*}
With~\eqref{eq:u to p}, we hence  obtain that
\begin{align*}
 \eta(\widehat\TT_n; \U_{\widehat\TT_n}[P_{ij}], P_{ij}) 
 \le \Cctr \, \qctr^n \, 
 \big[ \Cmon' \, \eta(\overline\TT; \U_{\overline\TT}[p_{\overline\TT}], p_{\overline\TT})
 + 2\Cstab \, \norm{P_{ij} - p_{\overline\TT}}{\P} \big].
\end{align*}
According to the reliability estimates \eqref{eq:reliability:stokes} and \eqref{eq2:reliability:stokes}, it holds that
\begin{align*}
 \norm{P_{ij} - p_{\overline\TT}}{\P}
 &\le \norm{p - p_{\overline\TT}}{\P} + \norm{p - P_{ij}}{\P}
 \\
 &\le  \Crel'(\kpuno) \, \big\{ \big( \eta(\overline\TT; \U_{\overline\TT}[p_{\overline\TT}],p_{\overline\TT}) + \norm{\div \U_{\overline\TT}[p_{\overline\TT}]}{\Omega} \big)  + \big( \eta_{ijk} + \norm{\div \U_{ijk}}{\Omega} \big) \big\}.
\end{align*}
By choice of $\overline\TT$ in Step~1 and for $k < \kk(i,j)$, we overall obtain that 
\begin{align}
\begin{split}
\label{eq:hat to ijk}
 \eta(\widehat\TT_n; \U_{\widehat\TT_n}[P_{ij}], P_{ij}) 
 &\,\le \qctr^n \, \Cctr \,
 \big[
 \Cmon' + 4  \,\Cstab\,\Crel'(\kpuno)
 \big] \, \big( \eta_{ijk} + \norm{\div \U_{ijk}}{\Omega} \big)
 \\&\reff{eq1:lemma:convergence:k}\le
 \qctr^n \, \Cctr \, \big[
 \Cmon' +4  \,\Cstab\,\Crel'(\kpuno) 
 \big] \,\frac{1}{\kpdue} \, \Big( 1 + \frac{1}{\kptre} \Big) \, \eta_{ijk}.
 \end{split}
\end{align}%

{\bf Step 3.}\; To shorten notation, we set $\etagals{ijk}[ij]:=\etagal{ijk}[ij]$ and $\gals{ijk}[ij]:=\gal{ijk}[ij].$
Note that  discrete reliability~\eqref{eq:discrete reliability} and stability~\eqref{eq:stability} imply optimality of D\"orfler marking (see, e.g.,~\cite[Section~4.5]{axioms}): 
For any $0 < \theta_\star < \theta_{\rm opt}$,  there exists some $0 < \lambda = \lambda(\theta_\star) \ll 1$ such that, for all $\widecheck\TT \in~\Tc(\TT_{ijk})$, it holds that
\begin{align}\label{eq:doerfler help}
\eta(\widecheck\TT; \U_{\widecheck\TT}[P_{ij}], P_{ij}) 
\le \lambda \, \etagals{ijk}[ij] 
\quad \Longrightarrow \quad
\theta_\star \, (\etagals{ijk}[ij])^2\le \eta(\TT_{ijk}\setminus\widecheck\TT;\U_{ijk},P_{ij}) ^2.
\end{align}
The second inequality in \eqref{eq:doerfler help}, Lemma~\ref{lemma:equivalent est}, and the Young inequality imply for $\delta>0$ that 
\begin{align*}
&(1-\kpuno\Cstab)^2\theta_\star \, \eta_{ijk}^2  \reff{eq:equivalent est}\le \theta_\star(\etagals{ijk}[ij])^2
 \reff{eq:doerfler help}\le \eta(\TT_{ijk}\setminus\widecheck\TT;\gals{ijk}[ij],P_{ij}) ^2
 \\& \qquad
 \reff{eq:stability2}\le (1+\delta) \eta(\TT_{ijk}\setminus\widecheck\TT;\U_{ijk},P_{ij}) ^2+(1+\delta^{-1})\kpuno^2\Cstab^2 \eta_{ijk}^2.
\end{align*}
Due to \eqref{eq:kappa}, we can choose $0 < \theta_\star < \theta_{\rm opt}$ sufficiently close to $\theta_{\rm opt}$ such that
\begin{align}\label{eq:doerfler inexact}
\theta\,\eta_{ijk}^2 \reff{eq:kappa}\le \sup_{\delta>0}\,\frac{(1-\kpuno\Cstab)^2\theta_\star-(1+\delta^{-1})\kpuno^2\Cstab^2}{1+\delta} \, \eta_{ijk}^2\le \eta(\TT_{ijk}\setminus\widecheck\TT;\U_{ijk},P_{ij}) ^2.
\end{align}

Let $\ell \in \N_0$ be the minimal integer such that 
\begin{align*}
\qctr^\ell \, \frac{\Cmon'}{1-\kpuno\Cstab} \Cctr \, \big[
 \Cmon' +4   \Cstab \Crel'(\kpuno)  
 \big] \,\frac{1}{\kpdue} \, \Big( 1 + \frac{1}{\kptre} \Big)
 \le \lambda.
\end{align*}
Recall $\widehat\TT_\ell$ from Step~2.
For $\widecheck\TT := \widehat\TT_\ell \oplus \TT_{ijk}$, it then holds that
\begin{align*}
 \eta(\widecheck\TT; \U_{\widecheck\TT}[P_{ij}], P_{ij})
\reff{eq:monotonicity} \le \Cmon' \, \eta(\widehat\TT_\ell; \U_{\widehat\TT_\ell}[P_{ij}], P_{ij}) 
\reff{eq:hat to ijk}\le \lambda\,(1-\kpuno\Cstab) \, \eta_{ijk}\reff{eq:equivalent est}\le \lambda\etagals{ijk}[ij].
\end{align*}
 Hence, \eqref{eq:doerfler help}--\eqref{eq:doerfler inexact} imply that  $\theta \, \eta_{ijk}^2\le \eta(\TT_{ijk}\setminus\widecheck\TT;\U_{ijk},P_{ij}) ^2$.

{\bf Step 4.}\quad
Since $\MM_{ijk} \subseteq \TT_{ijk}$ in Algorithm~\ref{algorithm:uzawa} {\rm(iv)} has (up to some fixed factor $\Cmark$) minimal cardinality, the overlay estimate \overlay\,  implies that
\begin{align*}
 &\Cmark^{-1} \#\MM_{ijk}
~\reff{eq:doerfler help}\le \#(\TT_{ijk}\setminus\widecheck\TT)
 \le \#\widecheck\TT-\#\TT_{ijk} 
 \stackrel{\overlay}{\le} \#\widehat \TT_\ell-\#\TT_{\rm init}
 \stackrel{\sonestimate}{\le}\Cson^\ell \#\widehat \TT_0
  \\&
 \stackrel{\overlay}{\le}\Cson^\ell \big( \#\close(\PP_i) + \#\overline\TT - \#\TT_{\rm init} \big)
~\reff{eq:apx}\lesssim (\A_s^{\rm c})^{1/s} \, \big( \eta_{ijk} + \norm{\div \U_{ijk}}{\Omega} \big)^{-1/s}
  + \#\close(\PP_i).
\end{align*}%
Elementary calculation (see, e.g.,~\cite[Lemma~22]{bhp17}) shows that
\begin{align*}
 \#\PP - \#\TT_{\rm init} + 1
 \le \#\PP \le 
 \#\TT_{\rm init} \, \big(\#\PP - \#\TT_{\rm init} + 1\big)
 \quad \text{for all } \PP \in \Tnc.
\end{align*}
With $\#\TT_{\rm init} \simeq 1 \lesssim \big( \eta_{ijk} + \norm{\div \U_{ijk}}{\Omega} \big)^{-1/s}$,
the conformity estimate~\eqref{mesh:closure2} yields that
\begin{align*}
 \#\close(\PP_i)
 \lesssim \big( \eta_{ijk} + \norm{\div \U_{ijk}}{\Omega} \big)^{-1/s} + (\#\PP_i - \#\TT_{\rm init}).
\end{align*}
Altogether, this step thus concludes that
\begin{align}\label{eq:aux:Mijk}
 \#\MM_{ijk} 
 \lesssim ( 1 + (\A_s^{\rm c})^{1/s} ) \, \big( \eta_{ijk} + \norm{\div \U_{ijk}}{\Omega} \big)^{-1/s}
 +  (\#\PP_i - \#\TT_{\rm init}).
\end{align}

%

\bigskip

{\bf Step 5.}\quad
Reliability~\eqref{eq2:reliability:velocity} as well as Algorithm~\ref{algorithm:uzawa}  {\rm(ii)}  show for all  $0 \le i' < i$ that
\begin{align*}
 \norm{\nabla \cdot(\u_{i'\jj} -\U_{i'\jj\kk})}{\Omega} 
 \le\norm{\u_{i'\jj} -\U_{i'\jj\kk}}{\V}
 \le\Crel'(\kpuno)\,\eta_{i'\jj\kk}
\le \Crel'(\kpuno)\,\frac{\kpdue}{1-\kpdue} \, \norm{\nabla \cdot \U_{i'\jj\kk}}{\Omega}.
\end{align*}
Let $0<\vartheta<\vartheta'<\Cdiv^{-1}$ and
 $\omega:=\Crel'(\kpuno){\kpdue}/(1-\kpdue)$.
For  $0<\kpdue\ll1$ with
\begin{align} \label{eq:kappa constraints optconv}
0<q := \Cdiv \, \frac{\omega + \vartheta'}{1-\omega}<1, 
\end{align} Lemma~\ref{lemma:refine:pressure} applies and proves  for all  $\overline\PP_{i'} \in \Tnc(\PP_{i'})$ that
\begin{align*}
 \norm{p-p_{\overline\PP_{i'}}}{\P} \le (1-q^2)^{1/2}  \, \norm{p-P_{i'\jj}}{\P}
 \quad\Longrightarrow\quad
  \#\PP_{i'+1}-\#\PP_{i'} 
 \lesssim \#\overline\PP_{i'}-\#\TT_{\rm init}. 
\end{align*}
We choose $\overline\PP_{i'}$ from the definition~\eqref{eq3:lemma:apx} of the approximation norm $\A_s^{\rm c}$ such that 
\begin{align*}
  \#\overline\PP_{i'}-\#\TT_{\rm init} 
 \le (\A_s^{\rm c}/\varepsilon_{i'})^{1/s} 
\quad\text{with}\quad  
\eta(\overline\PP_{i'};\U_{\overline\PP_{i'}}[p_{\overline\PP_{i'}}],p_{\overline\PP_{i'}})+\norm{\div \U_{\overline\PP_{i'}}[p_{\overline\PP_{i'}}]}{\Omega}
 \\
 \le \varepsilon_{i'}:=\frac{(1-q^2)^{1/2}}{\Crel'(\kpuno)} \, \norm{p-P_{i'\jj}}{\P}.
\end{align*}
Reliability~\eqref{eq:reliability:stokes} shows that $ \norm{p-p_{\overline\PP_{i'}}}{\P} \le \Crel\,\big(\eta(\overline\PP_{i'};\U_{\overline\PP_{i'}}[p_{\overline\PP_{i'}}],p_{\overline\PP_{i'}})+\norm{\div \U_{\overline\PP_{i'}}[p_{\overline\PP_{i'}}]}{\Omega}\big)$. 
With $\Crel\le\Crel'(\kpuno)$,  Lemma~\ref{lemma:convergence:i} and
 Lemma~\ref{lem:uniform summability}~(b) yield that 
\begin{align*}
 \#\PP_i - \#\TT_{\rm init}
 = \sum_{i'=0}^{i-1}( \#\PP_{i'+1}-\#\PP_{i'})
 \lesssim (\A_s^{\rm c})^{1/s} \, \sum_{i'=0}^{i-1} \norm{p-P_{i'\jj}}{\P}^{-1/s} 
\stackrel{{\rm(b)}}{\lesssim} (\A_s^{\rm c})^{1/s} \, \norm{p-P_{(i-1)\jj}}{\P}^{-1/s}.
\end{align*}
Next, we prove that $\norm{p-P_{(i-1)\jj}}{\P}^{-1/s} \lesssim \big(\eta_{ijk}+\norm{\div \U_{ijk}}{\Omega}\big)^{-1/s}$. To this end, 
we apply Lemma~\ref{lemma:monotone}~(a)--(d) and Lemma~\ref{lemma:convergence:i}. 
For $i,j>0$,  it holds that
\begin{align*}
 &\eta_{ijk} + \norm{\div\U_{ijk}}{\Omega}
 \stackrel{{\rm(c)}}\lesssim \eta_{ij0} + \norm{\div\U_{ij0}}{\Omega}
 \stackrel{{\rm(b)}}\lesssim \eta_{i(j-1)\kk} + \norm{\div\U_{i(j-1)\kk}}{\Omega}
 \stackrel{{\rm(d)}}\lesssim \eta_{i0\kk} + \norm{\div\U_{i0\kk}}{\Omega}
 \\
 &\quad\stackrel{{\rm(c)}}\lesssim \eta_{i00} + \norm{\div\U_{i00}}{\Omega}
 \stackrel{{\rm(a)}}\lesssim \eta_{(i-1)\jj\kk} + \norm{\div\U_{(i-1)\jj\kk}}{\Omega}
~\reff{eq1:lemma:convergence:i}\simeq \norm{p-P_{(i-1)\jj}}{\P}.
\end{align*}
Note that the overall estimate is also true if $j=0$.  
This proves that $\#\PP_i - \#\TT_{\rm init} \lesssim(\A_s^{\rm c})^{1/s} \,\big(  \eta_{ijk}+\norm{\div \U_{ijk}}{\Omega}\big)^{-1/s}$.  With~\eqref{eq:aux:Mijk}, we obtain that
\begin{align*}
 \#\MM_{ijk} 
 \lesssim ( 1 +(\A_s^{\rm c})^{1/s} ) \, \big(\eta_{ijk}+\norm{\div \U_{ijk}}{\Omega}\big)^{-1/s}.
\end{align*}
This concludes the proof.
\end{proof}

\begin{proof}[Proof of Theorem~\ref{thm:optimal}]
The proof is split into two steps.

\textbf{Step 1.} 
We show the lower bound 
 in~\eqref{eq:optimal convergence}.
Recall that $P_{ij}\in\P(\PP_i)\subseteq\P(\TT_{ijk})$ for all $(i,j,k)\in\QQ$.
Therefore, Lemma~\ref{lemma2:apx} gives that
\begin{align}\label{eq:optimal proof}
\varrho(\TT_{ijk})&\reff{eq1:lemma2:apx}\lesssim\etagal{ijk}[ij]+\norm{\div \gal{ijk}[ij]}{\Omega}
\reff{eq:equivalent est}\simeq \eta_{ijk}+\norm{\div \U_{ijk}}.
\end{align}
If there exists some $(i,j,k)\in\QQ$ such that $\TT_{ijk}=\TT_{i'j'k'}$ for all $(i',j',k')\in\QQ$ with $(i,j,k)\le(i',j',k')$, then, $\varrho(\TT_{i'j'k'})=\varrho(\TT_{ijk})$,~\eqref{eq1:lemma2:apx}, and 
convergence~\eqref{eq:linconv} yield that $\varrho(\TT_{i'j'k'})=0$ and hence $\A_s^{\rm c}<\infty$. 
Otherwise, let $N\in\N_0$ and let $(i,j,k)\in\QQ$ be the largest possible index (with respect to ``$\le$'') such that $\#\TT_{ijk}-\#\TT_{\rm init}\le N$, i.e., $\TT_{ijk}\in\Tc_N$.
Clearly, it holds that $k<\underline k(i,j)$. Therefore,  the son estimate~\eqref{mesh:sons} yields that
\begin{align*}
N+1<\#\TT_{ij(k+1)}-\#\TT_{\rm init}+1  \simeq \#\TT_{ij(k+1)} \reff{mesh:sons}\simeq \#\TT_{ijk}\simeq \#\TT_{ijk}-\#\TT_{\rm init}+1.
\end{align*}
Together with~\eqref{eq:optimal proof}, this leads to
\begin{align*}
\min_{\TT\in\Tc_N} (N+1)^s\rho(\TT)\lesssim (\#\TT_{ijk}-\#\TT_{\rm init}+1)^s\rho(\TT_{ijk}).
\end{align*}
Taking the supremum over all $(i,j,k)\in\QQ$, and then over all $N\in\N_0$, we conclude the first step.

\textbf{Step 2.} 
We show the upper bound 
 in~\eqref{eq:optimal convergence}.
According to the closure estimate~\eqref{mesh:closure} and Lemma~\ref{lem:comparison},  it holds for all $(i',j',k')\in\QQ$ with $\TT_{i'j'k'} \neq \TT_{\rm init}$ that 
\begin{align*}
\#\TT_{i'j'k'}-\#\TT_{\rm init}+1&\,\simeq\,\#\TT_{i'j'k'}-\#\TT_{\rm init}\reff{mesh:closure}\lesssim\sum_{\substack{(i,j,k)\le(i',j',k')\\ k\neq\underline k (i,j)}}\#\MM_{ijk}\\
&\reff{eq:comparison}\lesssim (1+(\A_s^{\rm c})^{1/s})  \sum_{(i,j,k)\le(i',j',k')}\big( \eta_{ijk} + \norm{\div \U_{ijk}}{\Omega} \big)^{-1/s}.
\end{align*}
Hence, linear convergence~\eqref{eq:linconv} in combination with Lemma~\ref{lem:uniform summability}~(a) gives that
\begin{align*}
\#\TT_{i'j'k'}-\#\TT_{\rm init}+1\lesssim(1+(\A_s^{\rm c})^{1/s})^s \big( \eta_{i'j'k'} + \norm{\div \U_{i'j'k'}}{\Omega} \big)^{-1/s} 
\end{align*}
for all for all $(i',j',k')\in\QQ$ with $\TT_{i'j'k'} \neq \TT_{\rm init}$. For all other $(i',j',k')\in\QQ$ with $\TT_{i'j'k'} = \TT_{\rm init}$, the latter estimate is clear. 
With $(1+(\A_s^{\rm c})^{1/s})^s\lesssim 1+\A_s^{\rm c}$, we conclude the proof.
\end{proof}

\appendix
\def\section#1{\refstepcounter{section}
\medskip
\begin{center}
\sc Appendix~\thesection. #1
\end{center}
\medskip
}

\section{Contraction property of $N_\alpha$}
\label{appendix:operator}

\noindent
The norm of a self-adjoint operator $T: H \to H$ on a Hilbert space $H$ satisfies that
\begin{align*}
 \|T\| = \max\{ |\mu|, |M| \},
 \quad \text{where} \quad
 \mu := \inf_{x \in H \backslash \{0\}} \frac{\dual{Tx}{x}_H}{\norm{x}{H}^{2}}
 \text{ and }
 M := \sup_{x \in H \backslash \{0\}} \frac{\dual{Tx}{x}_H}{\norm{x}{H}^{2}}.
\end{align*}
If $T$ is positive semi-definite (i.e., $\dual{Tx}{x}_H \ge 0$ for all $x \in H$), then
\begin{align*}
 \|T\| = \sup_{x \in H \backslash \{0\}} \frac{\dual{Tx}{x}_H}{\norm{x}{H}^{2}}.
\end{align*}
Consider $H = \P$.
Let $0 < \alpha < 2\,\|S\|^{-1}$. Since the Schur complement operator $ S = \nabla \cdot \Delta^{- 1} \nabla: \P \to \P$
is self-adjoint, also the operator $T := I-\alpha S$ is self-adjoint. Moreover, $S$ is positive definite. 
Hence,
\begin{align*}
 \mu = \inf_{q \in \P\, \backslash \{0\}}
 \frac{ \dual{(I-\alpha S)q}{q}_{\Omega} }{ \norm{q}{\Omega}^2 }
 = 1 - \alpha \, \sup_{q \in \P\, \backslash \{0\}} \frac{ \dual{Sq}{q}_{\Omega} }{ \norm{q}{\Omega}^2 }
 = 1 - \alpha \, \|S\| >- 1
\end{align*}
as well as 
\begin{align*}
 M = \sup_{q \in \P\,\backslash \{0\}}
 \frac{ \dual{(I-\alpha S)q}{q}_{\Omega} }{ \norm{q}{\Omega}^2 }
 = 1 - \alpha \, \inf_{q \in \P\, \backslash \{0\}} \frac{ \dual{Sq}{q}_{\Omega} }{ \norm{q}{\Omega}^2 } < 1.
\end{align*}
Altogether, $\|I-\alpha S\| = \max\{ |\mu|, |M| \} < 1$ and thus  $N_\alpha : \P \to \P$ from \eqref{eq:N_alpha} is a contraction.

\section{Proof of~(\ref{eq:appendix:div})}
\label{appendix:div}

\noindent
It suffices to prove the inequality for $\v$ in the dense subspace $C_c^{\infty}(\Omega)^n\subseteq H_0^1(\Omega)=\V$. 
Integration by parts and the fact that $\partial_k\partial_j\v_j = \partial_j\partial_k\v_j$ show that
\begin{align*}
 &\norm{\div \v}{\Omega}^2
 = \sum_{j, k = 1}^n \dual{\partial_j \v_j}{\partial_k \v_k}_\Omega
 = -\sum_{j, k = 1}^n \dual{\partial_k\partial_j \v_j}{\v_k}_\Omega
 = -\sum_{j, k = 1}^n \dual{\partial_j\partial_k \v_j}{\v_k}_\Omega
 \\&\quad
 = \sum_{j, k = 1}^n \dual{\partial_k \v_j}{\partial_j \v_k}_\Omega
 \le \sum_{j, k = 1}^n \norm{\partial_k \v_j}{\Omega} \norm{\partial_j \v_k}{\Omega}
 \le \frac12 \, \sum_{j, k = 1}^n \big( \norm{\partial_k \v_j}{\Omega}^2 + \norm{\partial_j \v_k}{\Omega}^2 \big)
 = \norm{\nabla \v}{\Omega}^2.
\end{align*}

\section{Proof of Remark~\ref{dpr:remark-problem}}
\label{appendix:remark}

{\bf Proof of~(\ref{eq1:dpr:remark-problem}).}
Let $q\in\{p,\f\}$. 
First, $\A_s^{\rm nc}(q) \le \A_s^{\rm c}(q) $ is trivially satisfied due to $\Tc\subseteq\Tnc$.
To see the converse inequality,  let $N\in\N_0$ be arbitrary and $\PP'\in\Tnc_N$ with 
$\varrho_{q}(\PP')=\min_{\PP\in\Tnc_N}\varrho_{q}(\PP)$. 
According to \eqref{mesh:closure2}, we have that $\close(\PP)\in\Tc_{\Cclosure N}$.
Thus, monotonicity of $\varrho_{q}$ gives that 
\begin{align*}
&\min_{\TT\in\Tc_{{\lfloor \Cclosure N \rfloor}}}(\Cclosure N+1)^s\varrho_{q}(\TT)\le (\Cclosure N+1)^s\varrho_{q}(\close(\PP'))
\le (\Cclosure+1)^s (N+1)^s\varrho_{q}(\PP')\\
&\quad=(\Cclosure+1)^s (N+1)^s\min_{\PP\in\Tnc_N}\varrho_{q}(\PP)\le (\Cclosure+1)^s \A_s^{\rm nc}({q}).
\end{align*}
Finally, elementary estimation yields for arbitrary $M\in\N_0$ and $N:=\lfloor M/\Cclosure\rfloor$ that 
\begin{align*}
\min_{\TT\in\Tc_{M}}(M+1)^s\varrho_{q}(\TT)\lesssim\min_{\TT\in\Tc_{\lfloor \Cclosure N \rfloor}}(\Cclosure N+1)^s\varrho_{q}(\TT)\le 2^s \A_s^{\rm nc}({q}).
\end{align*}
Taking the supremum over all $M\in\N_0$, we conclude the proof.\qed

\bigskip

{\bf Proof of~(\ref{eq:apx:remark}).}
By definition, we have that  $\varrho_{\u}(\mesh) + \varrho_p(\mesh)+\varrho_{\f}(\mesh) = \varrho_{\u,p,\f}(\mesh)$. Hence, 
\begin{align*}
\A_s^{\rm c}(\u) + \A_s^{\rm c}(p)+\A_s^{\rm c}(\f)\le 3 \, \A_s^{\rm c}(\u,p,\f).
\end{align*}
Moreover, the overlay estimate~\eqref{mesh:overlay} also proves the converse estimate.

To see this, let $N \in \N_0$. 
If $N \text{ mod } 3=0$, choose $n'=n''=n'''=N/3 \in \N_0$.
If $N \text{ mod }3=2$, choose $n'=(N-1)/3$, $n'' = (N-1)/3 \in \N_0$, $n''' = (N+2)/3\in\N_0$.
If $N \text{ mod }3=1$, choose $n'=(N-2)/3$, $n'' = (N+1)/3 \in \N_0$, $n''' = (N+1)/3\in\N_0$.
Choose $\TT' \in \Tc_{n'}$ such that $\varrho_{\u} (\TT') = \min_{\TT \in \Tc_{n'}}\varrho_{\u} (\TT)$.
Choose $\TT'' \in \Tc_{n''}$ such that $\varrho_{p}(\TT'') = \min_{\TT \in \Tc_{n''}} \varrho_{p}(\TT)$. 
Choose $\TT''' \in \Tc_{n'''}$ such that $\varrho_{p}(\TT''') = \min_{\TT \in \Tc_{n'''}} \varrho_{p}(\TT)$. 
Then, $n' + n'' +n''' = N$ and hence $\TT :=  \TT'\oplus \TT'' \oplus\TT'''  \in \Tc_N$. Moreover, the monotonicity of $\varrho_{\u},\varrho_p$, and $\varrho_{\f}$ yields that
\begin{align*}
 (N+1)^s \, \varrho_{\u,p,\f}(\mesh) 
 &\le \Big(\frac{N+1}{n'+1}\Big)^s \, (n'+1)^s \, \varrho_{\u} (\TT')
 + \Big(\frac{N+1}{n''+1}\Big)^s \, (n''+1)^s \, \varrho_{p}(\TT'')\\
& \quad + \Big(\frac{N+1}{n'''+1}\Big)^s \, (n'''+1)^s \, \varrho_{\f}(\TT''') 
 \le \Big(\frac{N+1}{n'+1}\Big)^s \, \big(  \A_s^{\rm c}(\u) + \A_s^{\rm c}(p)+\A_s^{\rm c}(\f)\big).
\end{align*}
Since $(N+1)/(n'+1) \le 3$, this concludes the proof.\qed

\bigskip

{\bf Proof of~(\ref{eq3:dpr:remark-problem}).}
For all $\TT\in\Tc$, it holds that $(1-\Pi_\TT)(-\nabla p_\TT+\Delta \U_\TT[p_\TT])=0$ and thus $\osc(\TT)\le  \eta(\TT; \U_\TT[p_\TT], p_\TT)$. 
Together with reliability~\eqref{eq:reliability:stokes}, this implies  that $\A_s^{\rm c}(\u,p,\f) \le \Crel \,\A_s^{\rm c}$.
A standard efficiency estimate (see, e.g., \cite[Lemma~4.2]{bmn02}) together with the triangle inequality and \eqref{eq:appendix:div} show that
\begin{align*}
& \eta(\mesh;\U_\mesh[p_\mesh], p_\mesh) + \norm{\div \U_\mesh[p_\mesh]}{\Omega} 
\stackrel{\mbox{\scriptsize\cite{bmn02}}}\lesssim
\norm{\u[p_\mesh]-\U_\mesh[p_\mesh]}{\V}+\osc(\TT)+\norm{\div \u[p_\TT]}{\Omega}\\
&\quad \reff{eq2:estimator:div}\le\norm{\u-\U_\mesh[p_\mesh]}{\V}+\norm{\u-\u[p_\mesh]}{\V}+\norm{p-p_\mesh}{\P} +\osc(\TT)
\\
&\quad\reff{eq:u to p}=\norm{\u-\U_\mesh[p_\mesh]}{\V}+2\norm{p-p_\mesh}{\P}+\osc(\TT).
\end{align*}
The hidden constant depends only on $\TT_{\rm init}$ and the polynomial degree of $m$.
Moreover, it holds that $\U_\mesh: = {\rm argmin}_{\VV_\mesh \in \V(\mesh)} \norm{\u - \VV_\mesh}{\V} = \U_\mesh[p]$. Hence,~\eqref{eq:u to p} shows that  
\begin{align*}
 \norm{\u - \U_\mesh[p_\mesh]}{\V}
 \le \norm{\u - \U_\mesh}{\V} + \norm{\U_\mesh[p] - \U_\mesh[p_\mesh]}{\V}
~\reff{eq:u to p}\le  \norm{\u - \U_\mesh}{\V} +  \norm{p - p_\mesh}{\P}.
\end{align*}
Combining the latter two estimates, we prove for $\TT_{\rm init}$-piecewise polynomial $\f$ that
\begin{align*}
 \eta(\mesh; \U_\mesh[p_\mesh], p_\mesh) + \norm{\div \U_\mesh[p_\mesh]}{\Omega}
 \lesssim \min_{\VV_\mesh \in \V(\mesh)} \norm{\u - \VV_\mesh}{\V} + \min_{Q_\mesh \in \P(\mesh)} \norm{p - Q_{\mesh}}{\P}+\osc(\TT).
\end{align*}
Overall, we thus get the converse estimate $\A_s^{\rm c} \lesssim \A_s^{\rm c}(\u,p,\f)$ and
hence obtain~\eqref{eq3:dpr:remark-problem}.\qed

\section{List of symbols}\label{sec:symbols}
The most important symbols are listed in the following table.

\tiny
\begin{longtable}{p{1.7cm} p{10.5cm} p{2.2cm}}
\hline
Name & Description &First appearance \\
\hline
$a(\cdot,\cdot)$ & bilinear form corresponding to $-\Delta$ & Section~\ref{subsec:contStokesprob}\\
$A$ & operator corresponding to $-\Delta$ & Section~\ref{subsec:contStokesprob}\\
$\A_s^{\rm c}$ & approximation constant on conforming triangulations & Lemma~\ref{lemma2:apx}\\
$\A_s^{\rm c}(\cdot)$ & approximation constant for given quantity on conforming triangulations & Lemma~\ref{lemma1:apx}\\
$\A_s^{\rm nc}$ & approximation constant on non-conforming triangulations & Lemma~\ref{lemma2:apx}\\
$\A_s^{\rm nc}(\cdot)$ & approximation constant for given quantity on non-conforming triangulations & Lemma~\ref{lemma1:apx}\\
$b(\cdot,\cdot)$ & bilinear from corresponding to $-\nabla\cdot$ & Section~\ref{subsec:contStokesprob}\\
$B$ & operator corresponding to $-\nabla\cdot$ & Section~\ref{subsec:contStokesprob}\\
$B'$ & operator corresponding to $\nabla$ & Section~\ref{subsec:contStokesprob}\\
$\binev(\cdot,\cdot,\cdot;\cdot)$ & output of Binev algorithm & Algorithm~\ref{function:refinePressure}\\
$\bisect(\cdot,\cdot)$ & non-conforming refinement function & Section~\ref{subsec:partitions}\\
$C_1$ & linear convergence constant in $k$-direction   & Lemma~\ref{lemma:convergence:k}\\
$C_2$ & linear convergence constant in $j$-direction & Lemma~\ref{lemma:convergence:j}\\
$C_3$ & linear convergence constant in $i$-direction & Lemma~\ref{lemma:convergence:i}\\
$\Cbin$ & Binev constant & Section~\ref{subsec:binev}\\
$\Cclosure$ & constant in closure estimate & Section~\ref{subsec:partitions}\\
$C_{\rm comp}$ & comparison constant & Lemma~\ref{lem:comparison}\\
$\Cdiv$ & equivalence constant for norms on pressure space& Section~\ref{subsec:contStokesprob}\\
$\Cdrel$ &discrete reliability constant & Lemma~\ref{lemma:discrete_reliability}\\
$\Clin$ & linear convergence constant & Theorem~\ref{theorem:linearconvergence}\\
$\Cmark$  & marking constant of adaptive algorithm & Algorithm~\ref{algorithm:uzawa}\\
$\Cmon$ & monotonicity constant for estimator & Lemma~\ref{lemma:monotone}\\
$\Cred$ & reduction constant& Lemma~\ref{lemma:reduction}\\
$\Crel$ &reliability constant & Lemma~\ref{lem:reliability}\\
$\Crel'(\cdot)$ & reliability constant for adaptive algorithm & Lemma~\ref{lemma:equivalent est}\\
$\Cson$ & maximal number of sons & Section~\ref{subsec:partitions}\\
$\Cstab$ &stability constant for estimator & Lemma~\ref{lemma:stability}\\
$\close(\cdot)$ & conforming closure of triangulation & Section~\ref{subsec:partitions}\\
$d$ & dimension & Section~\ref{subsec:model problem}\\
$\eta$ & error estimator & Section~\ref{section:estimation}\\
$\eta_{ijk}$ & error estimator of adaptive algorithm & Section~\ref{section:adaptive algorithm}\\
$\eta_T$ & error indicator on an element & Section~\ref{section:estimation}\\
$\f$ & given body force & Section~\ref{subsec:model problem}\\
$\gamma$ & shape regularity constant & Section~\ref{subsec:partitions}\\
$\underline j$  & maximal index $j$ for given index $i$ & Lemma~\ref{lemma:set:Q}\\
$\underline k$  & maximal index $k$ for given indices $(i,j)$ & Lemma~\ref{lemma:set:Q}\\
$\kappa_1$ & parameter of adaptive algorithm to approximate Galerkin approximation & Algorithm~\ref{algorithm:uzawa}\\
$\kappa_2$ & parameter for $i$ direction of adaptive algorithm & Algorithm~\ref{algorithm:uzawa}\\
$\kappa_3$ &parameter for $j$ direction of adaptive algorithm & Algorithm~\ref{algorithm:uzawa}\\
$m$ & polynomial degree & Section~\ref{subsec:discrete}\\
$\Omega$ &bounded Lipschitz domain & Section~\ref{subsec:model problem}\\
$p$ &exact pressure & Section~\ref{subsec:model problem}\\
$p_i$ & best approximation in discrete pressure space of adaptive algorithm & Section~\ref{section:adaptive algorithm}\\
$p_\PP$ & best approximation in discrete pressure space  & Section~\ref{section:auxiliary}\\
$P_{ij}$ & approximative pressure of adaptive algorithm & Section~\ref{section:adaptive algorithm}\\
$\P$ &pressure space & Section~\ref{subsec:model problem}\\
$\P(\cdot)$ & discrete pressure space on non-conforming triangulation & Section~\ref{subsec:discrete}\\
$\P_i$ & discrete pressure space of adaptive algorithm & Section~\ref{section:adaptive algorithm}\\
$\PP_i$ & non-conforming triangulation for pressure of adaptive algorithm & Section~\ref{section:adaptive algorithm}\\
$\Pi_i$ & $L^2$-orthogonal projection on non-conforming triangulation of adaptive algorithm & Section~\ref{section:adaptive algorithm}\\
$\Pi_\PP$ & $L^2$-orthogonal projection on non-conforming triangulation & Section~\ref{section:auxiliary}\\
$q_1$ & linear convergence constant in $k$-direction between $0$ and $1$ & Lemma~\ref{lemma:convergence:k}\\
$q_2$ & linear convergence constant in $j$-direction between $0$ and $1$ & Lemma~\ref{lemma:convergence:j}\\
$q_3$ & linear convergence constant in $i$-direction between $0$ and $1$ & Lemma~\ref{lemma:convergence:i}\\
$\qlin$ &linear convergence constant between $0$ and $1$ & Theorem~\ref{theorem:linearconvergence}\\
$\qred$ & reduction constant between $0$ and $1$ & Lemma~\ref{lemma:reduction}\\
$\QQ$  &set of possible indices & Lemma~\ref{lemma:set:Q}\\
$\refine(\cdot,\cdot)$ & conforming refinement function & Section~\ref{subsec:partitions}\\
$S$ & Schur complement operator & Section~\ref{subsec:contStokesprob}\\
$\Tc$ & set of conforming triangulations & Section~\ref{subsec:partitions}\\
$\Tc(\cdot)$ & set of conforming refinements & Section~\ref{subsec:partitions}\\
$\mathbb{T}_\varepsilon^{\rm c}(\cdot)$ & set of conforming triangulations with given quantity below $\varepsilon$ & Lemma~\ref{lemma1:apx}\\
$\mathbb{T}_N^{\rm c}$ & set of conforming triangulations with bounded element number & Lemma~\ref{lemma1:apx}\\
$\Tnc$ & set of non-conforming triangulations & Section~\ref{subsec:partitions}\\
$\Tnc(\cdot)$ & set of non-conforming refinements & Section~\ref{subsec:partitions}\\
$\mathbb{T}_\varepsilon^{\rm nc}(\cdot)$ & set of non-conforming triangulations with given quantity below $\varepsilon$  & Lemma~\ref{lemma1:apx}\\
$\mathbb{T}_N^{\rm nc}$ & set of non-conforming triangulations with bounded element number & Lemma~\ref{lemma1:apx}\\
$\TT_{ijk}$ & conforming triangulation for velocity of adaptive algorithm  & Section~\ref{section:adaptive algorithm}\\
$\TT_{\rm init}$ & initial conforming triangulation & Section~\ref{subsec:partitions}\\
$\vartheta$ & parameter of Binev algorithm & Algorithm~\ref{function:refinePressure}\\
$\theta$ & D\"orfler marking parameter of adaptive algorithm & Algorithm~\ref{algorithm:uzawa}\\
$\theta_{\rm opt}$ & threshold for  D\"orfler marking parameter & Algorithm~\ref{algorithm:uzawa}\\
$\u$ &exact velocity & Section~\ref{subsec:model problem}\\
$\u[\cdot]$ & exact velocity for given pressure & Section~\ref{section:auxiliary}\\
$\u_{ij}$ & exact velocity to approximate pressure of adaptive algorithm & Section~\ref{section:adaptive algorithm}\\
$\u_\PP$ & exact velocity for best approximation in discrete pressure space & Section~\ref{section:auxiliary}\\
$\U_{ijk}$ & approximative velocity of adaptive algorithm & Section~\ref{section:adaptive algorithm}\\
$\U_\TT[\cdot]$& Galerkin approximation of velocity for given pressure & Section~\ref{section:auxiliary}\\
$\V$ &velocity space & Section~\ref{subsec:model problem}\\
$\V(\cdot)$ &discrete velocity space on conforming triangulation & Section~\ref{subsec:discrete}\\
$\V_{ijk}$ & discrete velocity space of adaptive algorithm & Section~\ref{section:adaptive algorithm}\\
$\dual{\cdot}{\cdot}_\Omega$ & $L^2$-scalar product & Section~\ref{subsec:contStokesprob}\\
$\norm{\cdot}{\Omega}$ & $L^2$-norm & Section~\ref{subsec:contStokesprob}\\
$\norm{\cdot}{\P}$ & norm on pressure space & Section~\ref{subsec:contStokesprob}\\
$\norm{\cdot}{\V}$ & norm on velocity space & Section~\ref{subsec:contStokesprob}\\
$|(\cdot,\cdot,\cdot)|$ & number of iterations to reach given indices & Section~\ref{sec:convergence thm}\\
$\oplus$ & overlay of two triangulations & Section~\ref{subsec:partitions}\\
$<$ & order relation on set of possible indices & Section~\ref{sec:convergence thm}\\
\hline
\end{longtable}

\def\section#1#2{\refstepcounter{section}
\medskip
\begin{center}
\sc #2
\end{center}
\medskip
}

\bibliographystyle{alpha}
\bibliography{literature}

\end{document}